\definecolor{ANDREW}{RGB}{255,127,0}
\theoremstyle{plain}
\newtheorem{proposition}{Proposition}[section]
\newtheorem{theorem}[proposition]{Theorem}
\newtheorem{lemma}[proposition]{Lemma}
\newtheorem{corollary}[proposition]{Corollary}
\theoremstyle{definition}
\newtheorem{definition}[proposition]{Definition}
\newtheorem{observation}[proposition]{Observation}
\theoremstyle{remark}
\newtheorem{remark}[proposition]{Remark}
\DeclareMathOperator{\Aff}{Aff}
\DeclareMathOperator{\Aut}{Aut}
\DeclareMathOperator{\Imaginary}{Im}
\DeclareMathOperator{\Hol}{Hol}
\DeclareMathOperator{\arctanh}{tanh^{-1}}
\DeclareMathOperator{\Span}{Span} 
\DeclareMathOperator{\Sym}{Sym} 
\DeclareMathOperator{\id}{id}
\DeclareMathOperator{\Fc}{\mathcal{F}}
\DeclareMathOperator{\Hc}{\mathcal{H}}
\DeclareMathOperator{\Ic}{\mathcal{I}}
\DeclareMathOperator{\Lc}{\mathcal{L}}
\DeclareMathOperator{\Oc}{\mathcal{O}}
\DeclareMathOperator{\Cb}{\mathbb{C}}
\DeclareMathOperator{\Nb}{\mathbb{N}}
\DeclareMathOperator{\Rb}{\mathbb{R}}
\newcommand{\abs}[1]{\left|#1\right|}
\newcommand{\norm}[1]{\left\|#1\right\|}
\newcommand{\wh}[1]{\widehat{#1}}
\newcommand{\ip}[1]{\left\langle #1\right\rangle}
\begin{document}

\title[Analytic polyhedron with non-compact automorphism group]{Generic analytic polyhedron with non-compact automorphism group}
\author{Andrew Zimmer}\address{Department of Mathematics, University of Chicago, Chicago, IL 60637.}
\email{aazimmer@uchicago.edu}
\date{\today}
\keywords{}
\subjclass[2010]{}

\begin{abstract} In this paper we prove the following rigidity theorem: a generic analytic polyhedron with non-compact automorphism group is biholomorphic to the product of a complex manifold with compact automorphism group and a polydisk. Moreover, this complex manifold and the dimension of this polydisk can be explicitly described in terms of the limit set of the automorphism group. 
\end{abstract}

\maketitle

\section{Introduction}

Given a bounded domain $\Omega \subset \Cb^d$ let $\Aut(\Omega)$ be the \emph{automorphism group} of $\Omega$, that is the group of biholomorphisms $f:\Omega \rightarrow \Omega$. The group $\Aut(\Omega)$ has a Lie group structure compatible with the compact-open topology and $\Aut(\Omega)$ acts properly on $\Omega$. It is a long standing problem to characterize the domains $\Omega$ where $\partial \Omega$ has nice properties and $\Aut(\Omega)$ is non-compact (see the survey~\cite{IK1999}). One well known result along these lines is the Wong-Rosay ball theorem:

\begin{theorem}[The Wong-Rosay Ball Theorem \cite{R1979, W1977}]
Suppose $\Omega \subset \Cb^d$ is a bounded strongly pseudoconvex domain. Then $\Aut(\Omega)$ is non-compact if and only if $\Omega$ is biholomorphic to the unit ball. 
\end{theorem}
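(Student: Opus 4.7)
The plan is to carry out the Pinchuk scaling method. Since $\Aut(\Omega)$ acts properly on $\Omega$, its non-compactness produces a sequence $\{\varphi_n\} \subset \Aut(\Omega)$ and a point $p \in \Omega$ with $\varphi_n(p)$ leaving every compact subset of $\Omega$; because $\Omega$ is bounded, a subsequence satisfies $\varphi_n(p) \to q$ for some $q \in \partial\Omega$, which I would fix for the rest of the argument. The converse direction is immediate, since the group of M\"obius transformations preserving the ball is non-compact.

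Next, I would exploit the strong pseudoconvexity of $\Omega$ at $q$ to put the boundary into normal form. After a local biholomorphism sending $q$ to $0$, one has
\[
\Omega \cap U = \{z \in U : 2\Real z_1 + |z_2|^2 + \cdots + |z_d|^2 + O(|z|^3) < 0\}
\]
on some neighborhood $U$ of $0$. The natural model is the Siegel half-space $\Sigma = \{z \in \Cb^d : 2\Real z_1 + \sum_{k \ge 2} |z_k|^2 < 0\}$, biholomorphic to the unit ball $B^d$ through a Cayley transform. I would then construct anisotropic affine dilations $\Lambda_n$ scaling $z_1$ by $1/\delta_n$ and $z_2, \dots, z_d$ by $1/\sqrt{\delta_n}$, where $\delta_n$ measures the distance from $\varphi_n(p)$ to $\partial\Omega$, together with a translation normalizing $\Lambda_n(\varphi_n(p))$ to a fixed base point $p_0 \in \Sigma$. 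A direct computation using the Taylor expansion above shows that the rescaled domains $\Lambda_n(\Omega)$ converge to $\Sigma$ in the local Hausdorff topology on $\Cb^d$.

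Finally, I would extract the biholomorphism from the rescaled maps $\psi_n := \Lambda_n \circ \varphi_n : \Omega \to \Cb^d$. By quantitative Kobayashi metric estimates near the strongly pseudoconvex point $q$ (the localization principle of Graham), for any compact $K \subset \Omega$ the Kobayashi distance from $\psi_n(K)$ to $p_0$ inside $\Lambda_n(\Omega)$ remains uniformly bounded; combined with $\Lambda_n(\Omega) \to \Sigma$, this confines each $\psi_n(K)$ to a compact subset of $\Sigma$. Montel's theorem then produces a subsequential limit $\psi : \Omega \to \Sigma$, and running the same analysis on the inverses $\varphi_n^{-1} \circ \Lambda_n^{-1}$ on compact subsets of $\Sigma$ yields a limit $\eta : \Sigma \to \Omega$ with $\eta \circ \psi = \id_\Omega$ and $\psi \circ \eta = \id_\Sigma$. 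Thus $\psi$ is a biholomorphism $\Omega \to \Sigma \cong B^d$.

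The main obstacle is the non-degeneracy of these limits: a priori $\psi$ could collapse into $\partial\Sigma$ or degenerate to a constant, and similarly for $\eta$. Ruling this out is exactly where strong pseudoconvexity enters in an essential way, via the Kobayashi comparison between $\Lambda_n(\Omega)$ and the model $\Sigma$. Without pseudoconvexity one can still scale, but the model is no longer the ball and the limit maps may fail to be biholomorphisms.
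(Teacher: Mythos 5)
The paper does not prove the Wong--Rosay theorem; it is stated as known background with citations to Rosay (1979) and Wong (1977), so there is no in-paper proof to compare against. Your sketch is a correct outline of the Pinchuk scaling proof, which is a well-established route but genuinely different from the cited arguments: Wong's original proof compared invariant volume forms (Eisenman and Carath\'eodory), showed that strict pseudoconvexity forces their ratio to tend to one at the boundary, and then used the invariance of that ratio under the non-compact automorphism group to pin it at one everywhere, which characterizes the ball; Rosay's contribution was a localization reducing the hypothesis to strong pseudoconvexity only at the orbit accumulation point. Your scaling argument is, in fact, closer in spirit to what the paper does for its own main result: the proof of Theorem~\ref{thm:main2} is also a rescaling argument, routed through the embedding $F:\Omega\hookrightarrow\Delta^M$ so that the model domain becomes $\Hc^r\times\Delta^{M-r}$ rather than the Siegel half-space, with Observation~\ref{obs:blow_up} together with the normal-family step in Section~7 playing the role that your affine dilations and Montel argument play here. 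The one step you gloss over, and which is worth making explicit, is the local Hausdorff convergence $\Lambda_n(\Omega)\to\Sigma$: after the holomorphic change of coordinates one must verify that the residual terms in the defining function are actually $o(\delta_n)$ on compacta under the anisotropic scaling $(z_1,z')\mapsto(\delta_n^{-1}z_1,\delta_n^{-1/2}z')$, which requires $C^2$ boundary regularity and is precisely where the argument fails at a Levi-degenerate point; acknowledging this is consistent with your closing remark, but the normal form should be stated carefully enough that the required cancellations are visible.
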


In this paper we will consider a particular class of domains which are in some sense as far as possible from being strongly pseudoconvex:

\begin{definition} A domain $\Omega \subset \Cb^d$ is called an \emph{analytic polyhedron} if there exists a neighborhood $U$ of $\Omega$ and holomorphic functions $f_1,\dots, f_N : U \rightarrow \Cb$ so that 
\begin{align*}
\Omega = \{ z \in U : \abs{f_i(z)} < 1 \text{ for } i =1, \dots, N\}.
\end{align*}
An analytic polyhedron is called \emph{generic} if we can choose the functions $f_1, \dots, f_N$ so that whenever $\xi \in \partial \Omega$ and $\abs{f_{i_1}(\xi)} =  \dots = \abs{f_{i_r}(\xi)} = 1$, then the vectors 
\begin{align*}
\nabla f_{i_1}(\xi), \dots, \nabla f_{i_r}(\xi)
\end{align*}
are $\Cb$-linearly independent. In this case we call $f_1, \dots, f_N$ a \emph{set of generic defining functions for $\Omega$}. 
\end{definition} 

The simplest example of an analytic polyhedron with non-compact automorphism group is the polydisk:
\begin{align*}
\Delta^r = \{ (z_1, \dots, z_r) \in \Cb^r : \abs{z_1} < 1, \dots, \abs{z_r} < 1\}.
\end{align*}
Additional examples can be constructed by taking a product of a polydisk and an analytic polyhedron. When $\Omega$ is convex, Kim showed that (up to biholomorphism) these are the only examples of generic analytic polyhedron with non-compact automorphism group:

\begin{theorem}\cite{K1992} Suppose $\Omega \subset \Cb^d$ is a bounded convex generic analytic polyhedron and $\Aut(\Omega)$ is non-compact. Then there exists some $r > 0$ and a convex domain $W \subset \Cb^{d-r}$ so that $\Omega$ is biholomorphic to $\Delta^r \times W$. 
\end{theorem}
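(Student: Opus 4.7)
My plan is to combine the non-compactness of $\Aut(\Omega)$ with the genericity of the defining functions to identify a polydisk factor, and then use convexity to globalize the resulting product structure via the scaling method.

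First, fix a basepoint $z_0 \in \Omega$. Since $\Aut(\Omega)$ acts properly on $\Omega$ and is non-compact, there is a sequence $\varphi_n \in \Aut(\Omega)$ with $\varphi_n(z_0) \to \xi \in \partial\Omega$. After passing to a subsequence and relabeling, we may assume $\abs{f_1(\xi)} = \cdots = \abs{f_r(\xi)} = 1$ while $\abs{f_i(\xi)} < 1$ for $i > r$, with $r \ge 1$, and by genericity the vectors $\nabla f_1(\xi), \dots, \nabla f_r(\xi)$ are $\Cb$-linearly independent. For each $i \le r$, the function $g_{n,i} := f_i \circ \varphi_n$ sends $\Omega$ into the open unit disk in $\Cb$ with $\abs{g_{n,i}(z_0)} \to 1$, so by Montel's theorem and the maximum principle a subsequence converges locally uniformly on $\Omega$ to a constant $c_i$ of modulus one. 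Intuitively, the automorphisms $\varphi_n$ are ``dragging'' the joint level sets of $(f_1,\dots,f_r)$ toward the limiting level $(c_1,\dots,c_r)$.

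Next I would consider the holomorphic map $F = (f_1, \dots, f_r) : \Omega \to \Delta^r$; by genericity, $dF$ has full rank at every boundary point $\xi' \in \partial\Omega$ where $\abs{f_1(\xi')} = \cdots = \abs{f_r(\xi')} = 1$, and in particular in an open neighborhood of such points in $\overline{\Omega}$. The goal is to show that $F$ realizes $\Omega$ as a trivial holomorphic bundle $\Delta^r \times W \to \Delta^r$, where $W \subset \Cb^{d-r}$ is a convex domain. To carry this out I would apply the Frankel/Pinchuk convex scaling method, rescaling $\Omega$ via $\varphi_n^{-1}$ composed with complex affine maps that normalize the defining functions $f_1, \dots, f_r$ near $\xi$ to coordinate hyperplanes $\{z_i = 0\}$ in $\Delta^r$. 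Because $\Omega$ is convex the scaling limit $\widehat{\Omega}$ is again a convex domain; the constancy of the limits $f_i \circ \varphi_n \to c_i$ combined with the $\Cb$-linear independence of the $\nabla f_i(\xi)$ forces $\widehat{\Omega}$ to split as $\Delta^r \times \widehat{W}$ with $\widehat{W}$ convex, and the normal-families limit of the rescaled $\varphi_n$'s should yield a biholomorphism $\Omega \to \widehat{\Omega}$.

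The main obstacle I anticipate is that last step: extracting an honest biholomorphism between $\Omega$ and the scaling limit, rather than merely a holomorphic map or a covering. Concretely, one must verify that the limit of the rescaled maps is injective, non-degenerate, and surjective onto $\widehat{\Omega}$. Convexity of $\Omega$, which guarantees uniform control on the Kobayashi metric under the rescaling and prevents degeneration of the normalizing affine maps, together with the clean transverse structure of $\partial\Omega$ near $\xi$ supplied by genericity, are precisely the features needed to push this argument through and yield the desired decomposition $\Omega \cong \Delta^r \times W$.
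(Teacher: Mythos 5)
This statement is a cited theorem (Kim, 1992), and the paper does not reprove it per se; rather, its Theorem~\ref{thm:main2} generalizes it by dropping convexity, and the proof there specializes to give the claim here. Your route is genuinely different from the paper's: you propose a direct Frankel/Pinchuk rescaling of the convex domain $\Omega$, leaning on convexity to control the limit (presumably close to Kim's original argument; the paper lists~\cite{K1992} among the convex rescaling references). The paper instead embeds $\Omega$ into a polydisk $\Delta^M$ via the padded defining functions $F=(f_1,\dots,f_M)$, proves two-sided Kobayashi metric and distance estimates (Theorems~\ref{thm:kob_metric} and~\ref{thm:kob_dist_up1}) showing $F$ behaves like a quasi-isometric embedding, and then rescales the \emph{ambient polydisk} by explicit affine maps. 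The Kobayashi estimates substitute for the uniformity that convexity gives you, which is exactly what lets the paper remove that hypothesis; your version is more direct but only works in the convex case.

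That said, your sketch has its emphasis inverted. The crux is the step you merely assert: that the constant limits $f_i \circ \varphi_n \to c_i$ together with $\Cb$-linear independence of $\nabla f_1(\xi),\dots,\nabla f_r(\xi)$ ``force'' the scaling limit $\widehat{\Omega}$ to split as $\Delta^r \times \widehat{W}$. Nothing in your argument exhibits the product structure; a convex limit that is sandwiched between product-like regions near the orbit need not be a global product, and you must actually produce the half-plane factors — for instance by showing $\widehat{\Omega}$ is invariant under a real $r$-parameter group of affine translations in the $r$ distinguished complex directions, or by explicitly realizing every fiber of the projection to $\Hc^r$. In the paper this is precisely the content of the two lemmas in the subsection ``Analyzing $\Phi(\Omega)$,'' which rely on Observation~\ref{obs:blow_up}, the local distance estimate of Theorem~\ref{thm:kob_dist_up1}, and Proposition~\ref{prop:images_of_limits}. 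By contrast, the obstacle you flag at the end — extracting a biholomorphism onto the limit rather than merely a holomorphic map or covering — is essentially free in the convex setting: for bounded convex Kobayashi-hyperbolic domains, Frankel's normalization of the rescaled maps always subconverges to a biholomorphism onto a convex limit domain. You should be worrying about the product splitting, not the injectivity.
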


In dimension two Kim, Krantz, and Spiro removed the convexity hypothesis and gave an explicit description of $r$ and $W$:

\begin{theorem}\cite{KKS2005}\label{thm:previous_2}
Suppose $\Omega \subset \Cb^2$ is a bounded generic analytic polyhedron and $\Aut(\Omega)$ is non-compact. Then:
\begin{enumerate}
\item If an automorphism orbit accumulates at a singular boundary point, then $\Omega$ is biholomorphic to $\Delta^2$, 
\item If an automorphism orbit accumulates at a smooth boundary point, then $\Omega$ is biholomorphic to the product of $\Delta$ and the maximal analytic variety in $\partial \Omega$ passing through the orbit accumulation point. 
\end{enumerate}
\end{theorem}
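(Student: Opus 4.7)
The strategy is a scaling argument in the spirit of Pinchuk, exploiting the precise local models that genericity provides. I would first record the boundary classification: at $\xi \in \partial \Omega$ with active defining functions $\abs{f_{i_1}(\xi)} = \dots = \abs{f_{i_r}(\xi)} = 1$ and $\Cb$-linearly independent gradients, the map $F = (f_{i_1}, \dots, f_{i_r})$ is a holomorphic submersion near $\xi$. In $\Cb^2$ this forces $r \in \{1, 2\}$. When $r=2$ (singular boundary point), $F$ is a local biholomorphism onto a neighborhood of the corner $F(\xi) \in \partial \Delta^2$. When $r=1$ (smooth boundary point), the level sets $\{f_{i_1} = c\}$ with $\abs{c} = 1$ foliate $\{\abs{f_{i_1}} = 1\}$ by complex curves that lie in $\partial \Omega$, giving its Levi foliation.

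For Case (1), fix $p \in \Omega$ and $\varphi_n \in \Aut(\Omega)$ with $\varphi_n(p) \to \xi$, and after relabelling assume $\abs{f_1(\xi)} = \abs{f_2(\xi)} = 1$. The bounded holomorphic map $G_n := (f_1 \circ \varphi_n, f_2 \circ \varphi_n) : \Omega \to \Delta^2$ forms a normal family. Compose with $\psi_n \in \Aut(\Delta^2)$ (a Möbius transformation in each factor) sending $G_n(p)$ to $0$ and normalizing $(\psi_n \circ G_n)'(p)$, then extract a subsequential limit $G : \Omega \to \overline{\Delta^2}$. I would then argue $G$ is a biholomorphism onto $\Delta^2$: non-degeneracy of $G'(p)$ comes from the fact that $(f_1, f_2)$ is a local biholomorphism near $\xi$ combined with a quantitative comparison between $\varphi_n'(p)$ and the Kobayashi metric of $\Omega$ at $\varphi_n(p)$. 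Openness plus a Hurwitz-type argument, or equivalently showing $G$ is proper by tracking orbit points to $\partial \Delta^2$, upgrades this to a biholomorphism.

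For Case (2), fix $p$ and $\varphi_n \in \Aut(\Omega)$ with $\varphi_n(p) \to \xi$ where only $\abs{f_1(\xi)} = 1$. The maximal analytic variety $V_\xi \subset \partial \Omega$ through $\xi$ is the component of $\{z \in \partial \Omega : f_1(z) = f_1(\xi)\}$ containing $\xi$. After normalization by $\psi_n \in \Aut(\Delta)$, the scaling limit $\pi := \lim \psi_n \circ f_1 \circ \varphi_n$ is a holomorphic submersion $\Omega \to \Delta$ with $\pi(p) = 0$, whose fibers are 1-dimensional complex submanifolds. I would then produce the decomposition $\Omega \cong \Delta \times W$ in three steps: (i) construct automorphisms of $\Omega$ covering arbitrary Möbius transformations of $\Delta$ via $\pi$, as limits of suitable compositions $\varphi_m \circ \varphi_n^{-1}$; (ii) use these to identify all fibers of $\pi$ with a single complex manifold $W$; (iii) trivialize the fibration by a global holomorphic section, obtained from contractibility of $\Delta$ or from an orbit of these new automorphisms. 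Finally identify $W$ with $V_\xi$ via boundary correspondence along the fiber accumulating at $\xi$.

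The main obstacle in both cases is producing a non-degenerate scaling limit. One must rule out rank collapse of $\psi_n \circ G_n$, respectively $\psi_n \circ f_1 \circ \varphi_n$: this requires uniform bounds showing that $\varphi_n'(p)$ expands in each independent direction transverse to the Levi foliation at precisely the rate compensated by $\psi_n$. Such bounds are typically obtained from invariance of the Kobayashi or Bergman metric under automorphisms together with explicit asymptotics of these invariant metrics near the two local boundary models (a corner of $\Delta^2$ and a smooth Levi-flat point). In Case (2) there is the further global challenge of assembling the trivialization: the local product structure near $\xi$ provided by genericity must be promoted to a global biholomorphism, and this is where the noncompactness of $\Aut(\Omega)$ is used in an essential way to spread the product structure across $\Omega$.
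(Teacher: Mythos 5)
This theorem is cited from \cite{KKS2005} and is not proved in the paper; what the paper proves is the higher-dimensional generalization Theorem~\ref{thm:main2}, by a rescaling argument of the same general flavor as yours. Measured against that proof, your proposal has a genuine gap in Case~(2). Step~(i) — that ``suitable compositions $\varphi_m \circ \varphi_n^{-1}$'' converge to automorphisms of $\Omega$ covering arbitrary M\"obius transformations of $\Delta$ — is asserted without any mechanism that would produce it. The induced homomorphism from (a subgroup of) $\Aut(\Omega)$ into $\Aut(\Delta)$ could perfectly well have discrete image, in which case you cannot ``spread the product structure'' by automorphisms at all. And step~(iii), trivializing the fibration via Grauert/Oka, presupposes that $\pi:\Omega\to\Delta$ is a locally trivial holomorphic fiber bundle with a well-defined typical fiber; you have not even established that the fibers of $\pi$ are connected, let alone mutually biholomorphic. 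The paper circumvents the entire fibration discussion: after embedding $\Omega$ into $\Delta^M$ (adding dummy coordinates so that $F$ is one-to-one on $\overline{\Omega}$ and $k_{\Delta^M}(F(z);dF_z v)\geq\epsilon\norm{v}$), it rescales and proves \emph{directly} that $\Phi(\Omega)=\Hc^r\times W$ with $W\cong\Fc(\xi)$, using Proposition~\ref{prop:faces} (bounded Kobayashi distance forces boundary limits into the same face) and Proposition~\ref{prop:images_of_limits} (the image of the degenerate limit $\varphi_\infty$ is all of $\Fc(\xi)$). No automorphism covering the base and no bundle trivialization are needed.

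Case~(1) is closer to correct, but still leans on two unproven analytic inputs you only gesture at. First, you need the limit $G$ to be nondegenerate (Jacobian nonvanishing) everywhere, not just at $p$; this is exactly what the two-sided Kobayashi metric comparison of Theorem~\ref{thm:kob_metric} delivers, and without it the Hurwitz/injectivity alternative cannot be resolved. Second, your ``showing $G$ is proper by tracking orbit points to $\partial\Delta^2$'' is vague: the distance-decreasing property gives $K_{\Delta^2}(G(z),G(w))\leq K_\Omega(z,w)$, which is the wrong direction for properness; what you actually need is an upper bound of the form $K_\Omega(z,w)\leq C\left(1+\max_i K_\Delta(f_i(z),f_i(w))\right)$ near the accumulation face, which is the content of Theorem~\ref{thm:kob_dist_up1} and is established via the local chart Proposition~\ref{prop:chart}. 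In short, your outline is of the right shape, but the estimates it silently requires are precisely the technical core of the paper's argument, and your step (i) in Case (2) would fail.
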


\begin{remark}  Kim, Krantz, and Spiro's theorem generalized an earlier result of Kim and Pagino~\cite{KP2001} and is related to a result of Fu and Wong~\cite{FW2000}: any simply-connected domain in $\Cb^2$ with generic piecewise smooth Levi-flat boundary and non-compact automorphism group is biholomorphic to a bidisc.\end{remark}

In this paper we complete the characterization of bounded generic analytic polyhedron with non-compact automorphism group:

\begin{theorem}\label{thm:main1}
Suppose $\Omega$ is a bounded generic analytic polyhedron and $\Aut(\Omega)$ is non-compact. Then there exists some $r > 0$ and a complex manifold $W$ with $\Aut(W)$ compact so that $\Omega$ is biholomorphic to $\Delta^r \times W$. 
\end{theorem}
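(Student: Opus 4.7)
My plan is to extract a polydisk factor for every defining function that gets ``pinned'' in a limit of a divergent automorphism sequence, and then iterate to force the remaining factor to have compact automorphism group. First, since $\Aut(\Omega)$ is non-compact there exist $\phi_n \in \Aut(\Omega)$ and $z_0 \in \Omega$ with $\phi_n(z_0) \to \xi \in \partial \Omega$. By Montel's theorem, after passing to a subsequence, $\phi_n$ converges locally uniformly on $\Omega$ to a holomorphic map $\phi : \Omega \to \overline{\Omega}$. Put
\[
I := \{\, i : |f_i(\xi)| = 1 \,\}.
\]
For each $i \in I$ the function $f_i \circ \phi$ on $\Omega$ is holomorphic, bounded in modulus by $1$, and attains modulus $1$ at the interior point $z_0$, so by the maximum modulus principle it equals a unimodular constant $c_i$. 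Hence the image of $\phi$ lies in the face
\[
F_I := \{\, w \in U : f_i(w) = c_i \text{ for all } i \in I \,\},
\]
which the generic condition makes a complex submanifold of $U$ of codimension $|I| \ge 1$.

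The candidate polydisk factor is the holomorphic submersion $\Phi : \Omega \to \Delta^{|I|}$ given by $\Phi(z) = (f_i(z))_{i \in I}$; genericity guarantees the submersion property and produces smooth fibers of pure dimension $d - |I|$. To promote $\Phi$ to a product projection I would apply the scaling method: for each $i \in I$ choose $M_n^{(i)} \in \Aut(\Delta)$ with $M_n^{(i)}(f_i(\phi_n(z_0))) = 0$, and by independently post-composing subsequences of $\phi_n$ with the $M_n^{(i)}$'s produce a family of boundary-accumulation limits whose $\Phi$-images can be made to hit any prescribed point of $\Delta^{|I|}$. Provided the renormalized limits are in fact automorphisms of $\Omega$---this is the central technical step---one obtains an action of $\Aut(\Delta)^{|I|}$ on $\Omega$ by automorphisms projecting onto the standard M\"obius action on the $\Phi$-factor. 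Setting $W := \Phi^{-1}(0)$, the map $(\zeta, w) \mapsto \psi_\zeta(w)$, where $\psi_\zeta$ is a limit automorphism whose $\Phi$-image sends $0$ to $\zeta$, should give a biholomorphism $\Delta^{|I|} \times W \cong \Omega$.

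To make $\Aut(W)$ compact I would iterate. The fiber $W$ is naturally an analytic polyhedron in the complex submanifold $F_I \cap U$ with defining functions $\{f_j|_{F_I} : j \notin I\}$, and genericity in $\Omega$ descends to genericity in $F_I$ since linear independence of $\{\nabla f_{i_1}, \dots, \nabla f_{i_r}\}$ in $\Cb^d$ implies linear independence of the restrictions to the transverse slice. If $\Aut(W)$ is non-compact, applying the above construction to $W$ extracts a further polydisk factor, and because the index set grows strictly within the finite set $\{1, \dots, N\}$, the process terminates. I expect the main obstacle to be the middle step: proving that the renormalized sequences really do limit to automorphisms of $\Omega$ with the correct rank and injectivity properties, and that the resulting family $\{\psi_\zeta\}$ depends holomorphically on $\zeta$ so as to give a bona fide trivialization. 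The generic condition is essential here because, via the implicit function theorem, it supplies a local product model of $\Omega$ near $\xi$ of the form $\Delta^{|I|} \times (F_I \cap U)$ on which the scaling and its limit can be controlled transparently.
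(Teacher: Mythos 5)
The heart of your plan---and the place it breaks down---is the step you yourself flag: ``provided the renormalized limits are in fact automorphisms of $\Omega$.'' They are not, and there is no natural candidate: the $M_n^{(i)}$ are automorphisms of the target disk, and there is no way to post-compose $\phi_n$ by them while staying inside $\Aut(\Omega)$ unless $\Omega$ already has a polydisk factor, which is exactly what you are trying to prove. This is circular as stated. The paper sidesteps this by giving up on automorphisms of $\Omega$ at this stage entirely: it embeds $\Omega \hookrightarrow \Delta^M$ (adding dummy defining functions so that $\nabla f_1, \dots, \nabla f_M$ span $\Cb^d$ everywhere on $\overline{\Omega}$), rescales the ambient polydisk by affine maps $\overline{A}_n$ with $\overline{A}_n(\Delta^M) \to \Hc^r \times \Delta^{M-r}$, and proves Kobayashi-metric and distance estimates (Theorems~\ref{thm:kob_metric} and~\ref{thm:kob_dist_up1}) making the embedding behave quasi-isometrically; those estimates are what force $\overline{A}_n F \varphi_n$ to be a normal family, force the limit $\Phi$ to be injective, and force its image to be exactly $\Hc^r \times W$. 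None of that machinery appears in your proposal. A related slip: genericity is a condition at \emph{boundary} points only. It does not make $(f_i)_{i\in I}$ a submersion on the interior of $\Omega$ (the remark after Theorem~\ref{thm:kob_metric} notes the gradients need not even span $\Cb^d$ at interior points), nor does it make $F_I$ a submanifold of all of $U$---only near the relevant boundary face.

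Your iteration for proving $\Aut(W)$ compact is a genuinely different idea from the paper's. The paper instead proves a refined product decomposition (Theorem~\ref{thm:main2_refined}) via characteristic decompositions, showing the $\Fc(\xi)$-factor automorphisms preserve each $f_i$ with $i \in \Ic(\xi)$, and then derives a contradiction: a noncompact $\Aut(\Fc(\xi))$ would produce a limit point $\eta \in \Lc(\Omega)$ with $r(\eta) > r(\xi)$, violating maximality. Your proposal---iterate the extraction, terminating because the accumulated index set grows inside $\{1, \dots, N\}$---is attractive and could work in principle, but it requires the extraction step to be valid for $W$ viewed as a generic analytic polyhedron in a lower-dimensional ambient manifold, which needs careful checking; and since that extraction step is precisely where the gap lies, this part of the plan is provisional.
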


As in the Kim, Krantz, and Spiro result we can explicitly describe $r$ and $W$ in terms of the limit set of $\Aut(\Omega)$.

\begin{definition}
Suppose $\Omega$ is a bounded domain in $\Cb^d$. The \emph{limit set of $\Aut(\Omega)$} is the set $\Lc(\Omega) \subset \partial \Omega$ of points $\xi \in \partial \Omega$ where there exists a sequence $\varphi_n \in \Aut(\Omega)$ and a point $x \in \Omega$ with $\varphi_n(x) \rightarrow \xi$.
\end{definition}

\begin{remark} When $\Omega$ is a bounded domain $\Aut(\Omega)$ acts properly on $\Omega$ and so the set $\Lc(\Omega)$ is non-empty if and only if $\Aut(\Omega)$ is non-compact. \end{remark}

Suppose $\Omega$ is a generic analytic polyhedron and $f_1, \dots, f_N$ is a set of generic defining functions for $\Omega$. Then for a point $\xi \in \partial \Omega$ define 
\begin{align*}
 \Ic(\xi) = \{ i : \abs{f_i(\xi)} =1 \}
\end{align*}
and
\begin{align*}
r(\xi) := \#\Ic(\xi).
\end{align*}
Also let $\Fc(\xi) \subset \partial \Omega$ be the connected component of
\begin{align*}
 \left\{ \eta \in \partial \Omega : f_i(\eta) = f_i(\xi) \text{ if } i \in \Ic(\xi) \text{ and } \abs{f_i(\eta)} < 1 \text{ if } i \notin \Ic(\xi) \right\}
\end{align*}
which contains $\xi$. Then $\Fc(\xi)$ is the maximal analytic variety in $\partial \Omega$ passing through $\xi$. With this notation we will prove the following:

\begin{theorem}\label{thm:main2}
Suppose $\Omega \subset \Cb^d$ is a bounded generic analytic polyhedron. If $\xi \in \Lc(\Omega)$, then $\Omega$ is biholomorphic to $\Delta^{r(\xi)} \times \Fc(\xi)$. 
\end{theorem}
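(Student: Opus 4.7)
My plan is to construct a biholomorphism $(\Phi,\psi) : \Omega \to \Delta^{r(\xi)} \times \Fc(\xi)$ whose first factor is built from the defining functions $\{f_i\}_{i\in\Ic(\xi)}$ and whose second factor arises as a normal limit of automorphisms accumulating at $\xi$. Write $r=r(\xi)$; after reindexing, assume $\Ic(\xi) = \{1,\dots,r\}$. Since $\xi \in \Lc(\Omega)$, fix $x_0 \in \Omega$ and a sequence $\varphi_n \in \Aut(\Omega)$ with $\varphi_n(x_0) \to \xi$.

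The first step extracts $\psi$ by a normal families argument. A subsequence of $\varphi_n$ converges locally uniformly to a holomorphic map $\psi : \Omega \to \overline{\Omega}$ with $\psi(x_0) = \xi$. For each $i \in \Ic(\xi)$, $f_i \circ \psi$ is bounded holomorphic on $\Omega$ with $|f_i\circ\psi(x_0)|=1$, so the maximum principle forces $f_i \circ \psi \equiv f_i(\xi)$. Hence $\psi(\Omega)\subset \Fc(\xi)$ by connectedness. Exploiting the $\Aut(\Omega)$-invariance of $\Lc(\Omega)$ and varying the base point $x_0$, I would show $\Fc(\xi)\subset \Lc(\Omega)$ and that $\psi$ is in fact a surjective holomorphic submersion onto $\Fc(\xi)$.

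The second step extracts the polydisk factor via rescaling with disk automorphisms. For each $n$ and $i\leq r$, let $\beta_i^{(n)} = \overline{f_i(\xi)}\,f_i(\varphi_n(x_0)) \in \Delta$ and choose $m_i^{(n)} \in \Aut(\Delta)$ with $m_i^{(n)}(\beta_i^{(n)})=0$. The maps
\begin{align*}
\Phi_n(z) = \bigl( m_1^{(n)}(\overline{f_1(\xi)} f_1(\varphi_n(z))),\ \dots,\ m_r^{(n)}(\overline{f_r(\xi)} f_r(\varphi_n(z))) \bigr)
\end{align*}
send $\Omega$ into $\Delta^r$ and satisfy $\Phi_n(x_0)=0$, so a subsequence converges locally uniformly to a holomorphic $\Phi : \Omega \to \overline{\Delta}^r$. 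Genericity is essential here: the $\Cb$-linear independence of $\nabla f_1(\xi),\dots,\nabla f_r(\xi)$, together with the blow-up rate $\bigl(1-|\beta_i^{(n)}|^2\bigr)^{-1}$ of $(m_i^{(n)})'(\beta_i^{(n)})$, should exactly balance the attenuation of $df_i \circ d\varphi_n(x_0)$ and force $d\Phi(x_0)$ to have rank $r$ with image complementary to $\ker d\psi(x_0)$.

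The hardest step is to show that $(\Phi,\psi) : \Omega \to \Delta^r \times \Fc(\xi)$ is a biholomorphism rather than just a nondegenerate holomorphic map. I plan to use the invariant metric theory: each coordinate $\Phi_i$ is built from a bounded holomorphic $f_i$ whose modulus is identically one on the stratum through $\xi$, so $\Phi_i$ realizes the Carath\'eodory metric of $\Omega$ in the $i$-th direction; together with $\psi$ this splits the Carath\'eodory and Kobayashi metrics of $\Omega$ as products along the polydisk and $\Fc(\xi)$ factors and forces them to coincide. Kobayashi completeness of bounded analytic polyhedra (provable by a standard estimate with the defining functions $f_1,\dots,f_N$) then makes $(\Phi,\psi)$ proper; combined with the local biholomorphism property from step two this yields a covering map onto $\Delta^r \times \Fc(\xi)$, and a degree-one check using the specific preimage $x_0$ of $(0,\xi)$, together with the uniqueness inherent in the limiting construction of $\Phi$ and $\psi$, completes the proof.
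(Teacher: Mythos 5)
Your broad strategy---a normal-family limit $\psi=\lim\varphi_n$ giving the $\Fc(\xi)$ factor, together with rescaled coordinates $m_i^{(n)}\circ f_i\circ\varphi_n$ giving the polydisk factor---is in the same spirit as the paper's proof, which forms $\Phi=\lim \overline{A}_nF\varphi_n$ with $F=(f_1,\dots,f_M)$ and observes that the last block of coordinates is exactly $(f_{r+1},\dots,f_M)\circ\psi$. But there are genuine gaps in what you have written, all traceable to the fact that you never establish the key technical input the paper builds in Sections~\ref{sec:kob_metric} and~\ref{sec:kob_dist}: a two-sided comparison between $k_\Omega$ (resp. $K_\Omega$) and the metrics $k_\Delta(f_i(\cdot);\,\cdot)$ induced by the defining functions.

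Concretely, that comparison is doing all the hard work at three places you treat as routine. First, to get that each $\Phi^{(i)}$ is non-constant (and not a unimodular constant) and that $d\Phi(x_0)$ has rank $r$, you need a \emph{lower} bound on $|d(f_i\circ\varphi_n)_{x_0}(v)|/(1-|f_i(\varphi_n(x_0))|^2)$. The distance-decreasing property gives only the upper bound; the lower bound is exactly Theorem~\ref{thm:kob_metric}, which relies on the local product charts of Proposition~\ref{prop:chart}, and your genericity remark does not by itself produce it. Second, ``varying the base point and using $\Aut(\Omega)$-invariance'' to conclude $\Fc(\xi)\subset\Lc(\Omega)$ and that $\psi$ is onto $\Fc(\xi)$ is not a valid deduction: $\Aut(\Omega)$ does not act on $\partial\Omega$, and to reach an arbitrary $\eta\in\Fc(\xi)$ one must construct points $y_n\in\Omega$ with $y_n\to\eta$ and $K_\Omega(\varphi_n^{-1}(y_n),x_0)$ bounded. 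That is the content of Proposition~\ref{prop:images_of_limits}, whose proof chains the local upper estimate of Theorem~\ref{thm:kob_dist_up1} along a path in $\Fc(\xi)$; you cannot get it from connectedness and the maximum principle alone. Third, your closing ``covering map plus degree one'' step needs properness of $(\Phi,\psi)$ as a map to $\Delta^r\times\Fc(\xi)$, and then (since $\Fc(\xi)$ need not be simply connected) a genuine degree computation; neither is addressed. The paper's proof avoids this entirely by choosing $F$ injective on $\overline\Omega$ and showing directly that $\Phi(z_1)=\Phi(z_2)$ forces $K_\Omega(z_1,z_2)=0$ via the Kobayashi-distance upper bound. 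So the proposal identifies the right objects, but the estimates that make the limits non-degenerate, surjective, and injective are precisely what you would still have to prove.
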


We can also explicitly describe the complex manifold $W$ in the statement of Theorem~\ref{thm:main1}.

\begin{proposition}\label{prop:main3}
Suppose $\Omega \subset \Cb^d$ is a bounded generic analytic polyhedron. If $\xi \in \Lc(\Omega)$ and 
\begin{align*}
r(\xi)= \max \{ r(\eta) : \eta \in \Lc(\Omega) \},
\end{align*}
then $\Aut(\Fc(\xi))$ is compact. 
\end{proposition}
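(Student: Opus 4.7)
The plan is to argue by contradiction: assume $\Aut(\Fc(\xi))$ is non-compact and construct $\xi' \in \Lc(\Omega)$ with $r(\xi') > r(\xi)$, contradicting maximality. By Theorem~\ref{thm:main2}, fix a biholomorphism $F : \Omega \to \Delta^{r(\xi)} \times \Fc(\xi)$. Since $\Omega$ is bounded and $\Fc(\xi)$ is a factor of this Kobayashi hyperbolic domain, $\Fc(\xi)$ is itself Kobayashi hyperbolic, so $\Aut(\Fc(\xi))$ acts properly on it; non-compactness therefore produces $\psi_n \in \Aut(\Fc(\xi))$ and $w_0 \in \Fc(\xi)$ with $\psi_n(w_0) \to \eta_0 \in \overline{\Fc(\xi)} \setminus \Fc(\xi) \subset \partial \Omega$. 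Because $\Fc(\xi)$ is a connected component, and hence both open and closed in, the set $\{\eta \in M : |f_j(\eta)| < 1 \text{ for } j \notin \Ic(\xi)\}$ inside the smooth submanifold $M := \{f_i = f_i(\xi) : i \in \Ic(\xi)\}$ (smoothness via the generic hypothesis), the point $\eta_0 \notin \Fc(\xi)$ must satisfy $|f_{j_0}(\eta_0)| = 1$ for some $j_0 \notin \Ic(\xi)$.

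Pair $\psi_n$ with a divergent sequence $\mu_n \in \Aut(\Delta^{r(\xi)})$ chosen so that $\mu_n(0) \to \zeta_0 \in (\partial \Delta)^{r(\xi)}$ (the distinguished boundary). Set $x_0 := F^{-1}(0, w_0)$ and $\Psi_n := F^{-1} \circ (\mu_n \times \psi_n) \circ F \in \Aut(\Omega)$, so that $F(\Psi_n(x_0)) = (\mu_n(0), \psi_n(w_0)) \to (\zeta_0, \eta_0)$. Extract a subsequence so $\Psi_n(x_0) \to \xi' \in \partial \Omega$; the limit cannot lie in $\Omega$, since continuity of $F$ at an interior limit would force $F(\xi') = (\zeta_0, \eta_0) \notin F(\Omega)$. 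Hence $\xi' \in \Lc(\Omega)$. The goal is to show that for every $i \in \Ic(\xi) \cup \{j_0\}$ we have $|f_i(\xi')| = 1$, which gives $r(\xi') \geq r(\xi) + 1$ and the required contradiction.

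\textbf{Main obstacle.} The hard step is verifying unimodularity of the relevant $f_i$'s at $\xi'$, i.e.\ transferring the boundary information of $\mu_n(0)$ and $\psi_n(w_0)$ in the product $\Delta^{r(\xi)} \times \Fc(\xi)$ through the biholomorphism $F$ back to $\xi'$. A natural route is a slicewise maximum-modulus argument: for each $i \in \Ic(\xi)$ the bounded holomorphic function $f_i \circ F^{-1}$ on $\Delta^{r(\xi)} \times \Fc(\xi)$ has modulus one on the face $\{\zeta_\xi\} \times \Fc(\xi)$ corresponding to $\Fc(\xi) \subset \partial \Omega$, which should force $f_i \circ F^{-1}$ to depend only on the $\Delta^{r(\xi)}$-coordinate; then $\mu_n(0) \to \zeta_0$ yields $|f_i(\xi')| = 1$ immediately. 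The $f_{j_0}$ behaviour can be handled analogously using a Lindel\"of-type principle on the Kobayashi hyperbolic manifold $\Fc(\xi)$. An alternative, more indirect route is a recursive application of Theorem~\ref{thm:main1} to $\Fc(\xi)$ viewed as an analytic polyhedron in $M$: non-compactness of $\Aut(\Fc(\xi))$ gives $\Fc(\xi) \cong \Delta^s \times \tilde W$ with $s \geq 1$, and comparing the resulting $\Omega \cong \Delta^{r(\xi) + s} \times \tilde W$ with the decomposition from Theorem~\ref{thm:main2} at any new orbit accumulation point yields the contradiction via a uniqueness-of-$\Delta$-factor argument; the cost of this route is extending Theorem~\ref{thm:main1} to analytic polyhedra in Stein submanifolds rather than in $\Cb^{d-r(\xi)}$.
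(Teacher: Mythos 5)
Your overall strategy is the same as the paper's: argue by contradiction and, assuming $\Aut(\Fc(\xi))$ is non-compact, manufacture a new orbit accumulation point $\xi'$ with $r(\xi') > r(\xi)$. You also correctly isolate the real difficulty: transferring the divergence in the $\Fc(\xi)$-factor back through the biholomorphism $F$ to boundary information about the defining functions $f_i$. But this is exactly where the proposal has a gap, and neither of your two routes closes it.

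Your Route 1 (slicewise maximum modulus / Lindel\"of) relies on two things that are not established. First, you implicitly assume that $F^{-1}$ extends continuously near a boundary face $\{\zeta_\xi\}\times\Fc(\xi)$ and sends it onto $\Fc(\xi)\subset\partial\Omega$; nothing in the paper or in the rescaling construction gives this kind of boundary regularity for $F = \Phi$, which is only produced as a locally uniform limit. Second, and more seriously, even granting that $\abs{f_i\circ F^{-1}}=1$ along the single face $\{\zeta_\xi\}\times\Fc(\xi)$, this is a measure-zero boundary condition and does not by itself force $f_i\circ F^{-1}$ to be independent of the $\Fc(\xi)$-coordinate; you would need the modulus-one condition on essentially the whole distinguished boundary, together with a rigidity statement for families of inner functions depending holomorphically on a parameter, neither of which you supply. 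Your Route 2 (recursion via Theorem~\ref{thm:main1}) needs an unproved extension of the whole rescaling machinery from domains in $\Cb^{d-r}$ to analytic polyhedra sitting inside Stein submanifolds, and even after that it runs into the very same problem: knowing $\Omega\cong\Delta^{r(\xi)+s}\times\wt W$ does not tell you that the extra $\Delta$-factors line up with level sets of the original defining functions $f_i$, which is what you need to conclude $r(\xi')>r(\xi)$.

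The paper supplies the missing ingredient by a genuinely different tool: the Remmert--Stein theory of characteristic decompositions. Theorem~\ref{thm:char_decomp} and Corollary~\ref{cor:finite_index} give a finite-index normal subgroup $H\leq\Aut(\Omega)$ whose elements preserve the level sets $L(z,f_i)$, and by running the rescaling argument with the sequence $\varphi_n$ chosen inside $H$, one shows (Theorem~\ref{thm:main2_refined}) that $L(z,f_i)=L(z,\Phi^{(i)})$ for $i\in\Ic(\xi)$, i.e.\ that the conjugated automorphisms $\wh\theta=\Phi^{-1}(\id,\theta)\Phi$ satisfy $f_i\circ\wh\theta = f_i$ on the nose. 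With that identity in hand, the contradiction is obtained by a diagonal argument using Proposition~\ref{prop:faces}, rather than by your doubly-divergent sequence $\mu_n\times\psi_n$. So the proposal gets the shape of the argument right but is missing the characteristic-decomposition input that makes the crucial ``unimodularity transfer'' rigorous.
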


Notice that Theorem~\ref{thm:main1} is an immediate consequence of Theorem~\ref{thm:main2} and Proposition~\ref{prop:main3}. 

\subsection{Outline of the proof of Theorem~\ref{thm:main2} and Proposition~\ref{prop:main3}:}

Like many of the results characterizing domains with large automorphism groups, a key step in our argument is rescaling. Historically rescaling has been successfully implemented only when the domain in question is 
\begin{enumerate}
\item convex (see for instance~\cite{F1989, K1992, BP1994, G1997}),
\item in $\Cb^2$ (see for instance~\cite{BP1988, BP1998, KKS2005, V2009}), or
\item strongly pseudoconvex (see for instance~\cite{P1991}). 
\end{enumerate}
In particular, none of the classical rescaling methods apply directly to analytic polyhedron. In the proof below, we make a rescaling argument work by constructing a holomorphic embedding $F:\Omega \hookrightarrow \Delta^M$, proving it has good properties, and then applying the rescaling method from the convex case to $F(\Omega) \subset \Delta^M$. 

This embedding is constructed in an obvious way: if necessary we introduce additional ``dummy''  holomorphic functions $f_{N+1}, \dots, f_M : U \rightarrow \Cb$ so that:
\begin{enumerate}
\item for any $N+1 \leq i \leq M$, $f_i(\overline{\Omega}) \subset \Delta$,
\item for any $z, w \in \overline{\Omega}$ distinct there exists some $1 \leq i \leq M$ so that $f_i(z)  \neq f_i(w)$,
\item for any point $z \in \overline{\Omega}$ we have
\begin{align*}
\Span_{\Cb} \left\{ \nabla f_1(z), \dots, \nabla f_M(z)\right\} = \Cb^d.
\end{align*}
\end{enumerate}
Then the map $F=(f_1, \dots, f_M) : U \rightarrow \Cb^M$ induces a holomorphic embedding $F:\Omega \hookrightarrow \Delta^M$. 

Proving that this embedding has good properties is more involved. This is accomplished in Sections~\ref{sec:kob_metric} and~\ref{sec:kob_dist} where the main aim is to prove estimates for the Kobayashi metric and distance showing that $F$ behaves like a quasi-isometric embedding. 

Then given an unbounded sequence $\varphi_n \in \Aut(\Omega)$ we consider the points $\overline{w}_n = F(\varphi_n( w_0)) \in \Delta^M$ and apply the  rescaling method to the sequence $(\Delta^M, \overline{w}_n)$. This produces affine maps $\overline{A}_n \in \Aff(\Cb^M)$ so that (up to reordering coordinates and passing to a subsequence) the sequence $\overline{A}_n( \Delta^M)$ converges in the local Hausdorff topology to $\Hc^r \times \Delta^{M-r}$ and $\overline{A}_n(\overline{w}_n)$ converges to a point $\overline{w}_\infty \in \Hc^r \times \Delta^{M-r}$. 

Then using the estimates on the Kobayashi metric and distance, we will show that $\left(\overline{A}_n F \varphi_n \right): \Omega \rightarrow \Cb^M$ is a normal family and after passing to a subsequence converges to a holomorphic embedding $\Phi: \Omega \rightarrow \Hc^r \times \Delta^{M-r}$. The final step in the proof of Theorem~\ref{thm:main2} is to show that the image of $\Phi$ has the form $\Hc^r \times W$ where $W$ is biholomorphic to $\Fc(\xi)$. 

To prove Proposition~\ref{prop:main3} we will use the classical theory of characteristic decompositions of analytic polyhedron to establish the following refinement of Theorem~\ref{thm:main2}:

\begin{theorem}(see Theorem~\ref{thm:main2_refined} below)
Suppose $\Omega \subset \Cb^d$ is a bounded generic analytic polyhedron. If $\xi \in \Lc(\Omega)$, then there exists a biholomorphism  $\Phi:\Omega \rightarrow \Delta^{r(\xi)} \times \Fc(\xi)$ so that: if $\theta \in \Aut(\Fc(\xi))$ and $\wh{\theta} = \Phi^{-1} (\id, \theta) \Phi$ then 
\begin{align*}
f_i\left(\wh{\theta}(z)\right) = f_i(z)
\end{align*}
for all $i \in \Ic(\xi)$ and $z \in \Omega$.
\end{theorem}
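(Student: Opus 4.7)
After relabeling indices assume $\Ic(\xi) = \{1,\dots,r\}$ where $r = r(\xi)$. The plan is to start from the biholomorphism $\Psi:\Omega \to \Delta^r \times \Fc(\xi)$ furnished by Theorem~\ref{thm:main2} and refine it so that its first $r$ coordinates agree, up to a biholomorphism of $\Delta^r$, with the defining functions $(f_1,\dots,f_r)$. Once this is done the theorem is immediate: writing $\Phi = (\Phi_1,\Phi_2)$ with $f_i = (\Phi_1)_i$ for $i \in \Ic(\xi)$, we compute for any $\theta \in \Aut(\Fc(\xi))$
\begin{align*}
f_i(\wh\theta(z)) = (\Phi_1)_i\bigl(\Phi^{-1}(\id,\theta)\Phi(z)\bigr) = (\Phi_1)_i(z) = f_i(z),
\end{align*}
because $(\id,\theta)$ fixes the first factor.

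For the refinement, set $G = (f_1,\dots,f_r): \Omega \to \Delta^r$. The key structural step is to show that $G$ is constant on each fiber $\Psi^{-1}(\{z\}\times\Fc(\xi))$; equivalently, that $\Psi$ carries the characteristic foliation of $\Omega$ (whose leaves are the maximal connected analytic subvarieties along which each $f_i$, $i \in \Ic(\xi)$, is constant) to the vertical foliation of $\Delta^r\times\Fc(\xi)$. I would deduce this by tracing through the rescaling construction of $\Psi$ in Theorem~\ref{thm:main2}: the affine maps $\overline{A}_n$ must respect the product decomposition $\Hc^r\times\Delta^{M-r}$ of the limit, which forces the first $r$ coordinates of $\overline{A}_n$ to depend only on the first $r$ coordinates of $\Cb^M$. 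Passing to the limit, the fibers of $\pi_1\circ\Psi$ coincide with the fibers of $G$, so $G$ descends through $\pi_1\circ\Psi$ to a holomorphic bijection $\wt G: \Delta^r \to \Delta^r$, which is therefore a biholomorphism, and $\Phi := (\wt G^{-1}\circ\pi_1\circ\Psi,\ \pi_2\circ\Psi)$ is the biholomorphism sought.

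The main obstacle is verifying that $\Psi$ respects the two foliations, i.e.\ that the identification of the limit image with $\Delta^r\times\Fc(\xi)$ in Theorem~\ref{thm:main2} does not scramble the product structure. This is precisely where the classical theory of characteristic decompositions enters, providing an intrinsic description of the product decomposition of a generic analytic polyhedron in terms of the boundary face through $\xi$: any biholomorphism to $\Delta^r\times\Fc(\xi)$ must send the characteristic leaves through boundary points of that face to vertical slices. Establishing this intrinsic rigidity and confirming that $\wt G$ is a bijection (rather than merely a dominant map) onto $\Delta^r$ will likely require some explicit boundary analysis, using that the first $r$ components of $F\circ\varphi_n$ have modulus tending to $1$ at $\xi$ along $\Ic(\xi)$.
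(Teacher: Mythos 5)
Your high-level outline is consistent with the paper's: once you know the first $r$ coordinate functions of the biholomorphism $\Phi$ have the same level sets as $f_1,\dots,f_r$, the conclusion is immediate because $(\id,\theta)$ preserves the first factor. However, the crux of the argument is precisely the step you leave open, and the route you sketch toward it is both incomplete and different from what the paper does.

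The gap is this: to show that the biholomorphism respects the characteristic foliation, it is not enough to observe that the affine maps $\overline{A}_n$ respect the product structure of $\Cb^M$. The $i$th coordinate of the rescaled map is $\Phi_n^{(i)} = \ell_n^{(i)}\circ f_i\circ \varphi_n$, so its level sets are those of $f_i\circ\varphi_n$, not of $f_i$. An arbitrary $\varphi_n\in\Aut(\Omega)$ may permute the defining functions, in the sense that $\varphi_n(L(z,f_i))=L(\varphi_n(z),f_{\sigma_n(i)})$ for some nontrivial permutation $\sigma_n$. If this happens the limit $\Phi^{(i)}$ will not have the level sets of $f_i$. The paper resolves this by invoking Theorem~\ref{thm:char_decomp} (Rischel--Stein: any automorphism permutes the characteristic decomposition) and Corollary~\ref{cor:finite_index} to produce a \emph{finite-index normal subgroup} $H\leq\Aut(\Omega)$ whose elements fix the characteristic decomposition, then proves a small lemma that the orbit $\varphi_n(w_0)\to\xi$ can be realized with $\varphi_n\in H$, and finally \emph{reruns} the entire rescaling construction of Theorem~\ref{thm:main2} with this carefully chosen sequence. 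This is what gives $L(z,f_i)\subset L(z,\Phi^{(i)})$, which is then upgraded to equality via Lemma~\ref{lem:wedge} (the $\nabla f_i\wedge\nabla\Phi^{(i)}=0$ argument). Your proposal neither identifies the subgroup $H$ nor proposes any other concrete mechanism to rule out the permutation; the appeal to an ``intrinsic rigidity'' of the product structure is a stronger claim than the paper uses, and it is at best delicate (if $\Fc(\xi)$ itself has a polydisk factor, $\Delta^{r}\times\Fc(\xi)$ has automorphisms that mix the factors, so a biholomorphism need not send characteristic leaves to coordinate slices in any canonical way).

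A secondary issue: you claim $G$ descends to a holomorphic \emph{bijection} $\wt{G}:\Delta^r\to\Delta^r$. Injectivity would follow from equality of fibers, but surjectivity onto $\Delta^r$ is not justified and is not obviously true. The paper avoids this entirely: it never tries to make $\pi_1\circ\Phi$ equal $(f_1,\dots,f_r)$; it only needs that the \emph{level sets} coincide, which is strictly weaker and suffices for the concluding computation $f_i(\wh\theta(z))=f_i(z)$.
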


Using this refinement it is straightforward to prove the contrapositive of Proposition~\ref{prop:main3}: if $\xi \in \Lc(\Omega)$ and $\Aut(\Fc(\xi))$ is non-compact, then there exists some $\eta \in \Lc(\Omega)$ with $r(\eta) > r(\xi)$. 

\subsection{Basic notation} We now fix some very basic notations.

\begin{itemize}
\item Let $\Delta: = \{ z \in \Cb : \abs{z} < 1\}$.
\item Let $\Hc = \{ z \in \Cb : \Imaginary(z) > 0\}$.
\item Let $\ip{\cdot,\cdot}$ be the standard Hermitian inner product on $\Cb^d$ and for $z \in \Cb^d$ let $\norm{z}:=\ip{z,z}^{1/2}$.
\item Given two open sets $\Omega_1 \subset \Cb^{d_1}$ and $\Omega_2 \subset \Cb^{d_2}$ let $\Hol(\Omega_1, \Omega_2)$ be the space of holomorphic maps from $\Omega_1$ to $\Omega_2$. 
\item Given two open sets $\Oc_1 \subset \Rb^{d_1}$, $\Oc_2 \subset \Rb^{d_2}$, a $C^1$ map $F: \Oc_1 \rightarrow \Oc_2$, and a point $x \in \Oc_1$ define the derivative $d(F)_x : \Rb^{d_1} \rightarrow \Rb^{d_2}$ of $F$ at $x$ by 
\begin{align*}
d(F)_x(v) := \left.\frac{d}{dt}\right|_{t=0} F(x+tv).
\end{align*}
\item Given a domain $\Omega \subset \Cb^d$ and a holomorphic function $f:\Omega \rightarrow \Cb$ define $\nabla f(z) \in \Cb^d$ by 
\begin{align*}
d(f)_z(v) = \ip{v, \nabla f(z)} \text{ for all } v \in \Cb^d.
\end{align*}
\end{itemize}

\section{The Kobayashi metric and rescaling}

In this section we recall some basic properties of the Kobayashi metric and distance.  A more thorough discussion can be found in~\cite{K2005} or~\cite{A1989}. We will then discuss the rescaling method in the particular case of polydisks. 

\subsection{The Kobayashi metric and distance} 
Given a domain $\Omega \subset \Cb^d$ the \emph{(infinitesimal) Kobayashi metric} is the pseudo-Finsler metric
\begin{align*}
k_{\Omega}(x;v) = \inf \left\{ \abs{\zeta} : f \in \Hol(\Delta, \Omega), \ f(0) = x, \ d(f)_0(\zeta) = v \right\}.
\end{align*}
By a result of Royden~\cite[Proposition 3]{R1971} the Kobayashi metric is an upper semicontinuous function on $\Omega \times \Cb^d$. In particular if $\sigma:[a,b] \rightarrow \Omega$ is an absolutely continuous curve (as a map $[a,b] \rightarrow \Cb^d$), then the function 
\begin{align*}
t \in [a,b] \rightarrow k_\Omega(\sigma(t); \sigma^\prime(t))
\end{align*}
is integrable and we can define the \emph{length of $\sigma$} to  be
\begin{align*}
\ell_\Omega(\sigma)= \int_a^b k_\Omega(\sigma(t); \sigma^\prime(t)) dt.
\end{align*}
One can then define the \emph{Kobayashi pseudo-distance} to be
\begin{multline*}
 K_\Omega(x,y) = \inf \left\{\ell_\Omega(\sigma) : \sigma\colon[a,b]
 \rightarrow \Omega \text{ is absolutely continuous}, \right. \\
 \left. \text{ with } \sigma(a)=x, \text{ and } \sigma(b)=y\right\}.
\end{multline*}
This definition is equivalent to the standard definition of $K_\Omega$ via analytic chains, see~\cite[Theorem 3.1]{V1989}.

One of the most important properties of these constructions is the following distance decreasing property (which is immediate from the definitions):

\begin{proposition}\label{prop:dist_decrease}
Suppose $\Omega_1 \subset \Cb^{d_1}$ and $\Omega_2 \subset \Cb^{d_2}$ are domains and $f:\Omega_1 \rightarrow \Omega_2$ is a holomorphic map. Then 
\begin{align*}
k_{\Omega_2}(f(x); d(f)_x(v) ) \leq k_{\Omega_1}(x;v)
\end{align*}
and 
\begin{align*}
K_{\Omega_2}(f(x), f(y)) \leq K_{\Omega_1}(x,y)
\end{align*}
for all $x,y \in \Omega_1$ and $v \in \Cb^{d_1}$. In particular, when $\Omega \subset \Cb^d$ is a domain the group $\Aut(\Omega)$ acts by isometries on the pseudo-metric space $(\Omega, K_\Omega)$. 
\end{proposition}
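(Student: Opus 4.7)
The plan is to unwind the definitions and use compositions with the given holomorphic map $f:\Omega_1\to\Omega_2$. There are three things to verify: the infinitesimal inequality, the integrated distance inequality, and the statement about automorphism groups acting by isometries.

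First, for the infinitesimal metric. Fix $x\in\Omega_1$ and $v\in\Cb^{d_1}$. Given any $g\in\Hol(\Delta,\Omega_1)$ with $g(0)=x$ and $d(g)_0(\zeta)=v$, I would consider the composition $f\circ g\in\Hol(\Delta,\Omega_2)$. By the chain rule, $(f\circ g)(0)=f(x)$ and $d(f\circ g)_0(\zeta)=d(f)_{x}(d(g)_0(\zeta))=d(f)_x(v)$. Thus $(f\circ g,\zeta)$ is an admissible pair in the definition of $k_{\Omega_2}(f(x);d(f)_x(v))$, giving $k_{\Omega_2}(f(x);d(f)_x(v))\le\abs{\zeta}$. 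Taking the infimum over all admissible $(g,\zeta)$ yields the desired inequality.

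Second, for the Kobayashi pseudo-distance. Let $\sigma:[a,b]\to\Omega_1$ be any absolutely continuous curve from $x$ to $y$. Then $f\circ\sigma:[a,b]\to\Omega_2$ is absolutely continuous from $f(x)$ to $f(y)$, and the chain rule gives $(f\circ\sigma)^\prime(t)=d(f)_{\sigma(t)}(\sigma^\prime(t))$ almost everywhere. Applying the infinitesimal inequality pointwise and integrating,
\begin{align*}
\ell_{\Omega_2}(f\circ\sigma)=\int_a^b k_{\Omega_2}\bigl((f\circ\sigma)(t);(f\circ\sigma)^\prime(t)\bigr)\,dt\le\int_a^b k_{\Omega_1}(\sigma(t);\sigma^\prime(t))\,dt=\ell_{\Omega_1}(\sigma).
\end{align*}
Taking the infimum over all such $\sigma$ gives $K_{\Omega_2}(f(x),f(y))\le K_{\Omega_1}(x,y)$.

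Third, for the isometric action of $\Aut(\Omega)$. If $\varphi\in\Aut(\Omega)$, apply the two inequalities both to $\varphi$ and to $\varphi^{-1}$; composing gives equality in both, so $\varphi$ preserves $k_\Omega$ and $K_\Omega$. The only mild subtlety anywhere is making sure the chain rule applies to the absolutely continuous curve $\sigma$ after composition with the smooth map $f$, which is standard since $f$ is locally Lipschitz on $\Omega_1$. Otherwise the proof is a direct translation of the definitions, so there is no substantive obstacle.
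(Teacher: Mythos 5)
Your proof is correct and is precisely the standard argument; the paper itself gives no proof, remarking only that the statement is ``immediate from the definitions,'' and your write-up is exactly the unwinding those words gesture at (compose admissible analytic disks with $f$, compose admissible curves with $f$ and use local Lipschitz continuity to preserve absolute continuity, then apply the inequalities to $\varphi$ and $\varphi^{-1}$).
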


\subsection{The unit disk}
Using the Schwarz lemma it is straightforward to see that the Kobayashi metric coincides with the Poincar{\'e} metric on the unit disk (at least up to a constant). In particular, 
\begin{align*}
k_{\Delta}(x; v) = \frac{\abs{v}}{1-\abs{x}^2}\end{align*}
and 
\begin{align*}
K_{\Delta}(x,y) = \arctanh \abs{\frac{x-y}{1-x\overline{y}}}
\end{align*}
for all $x,y \in \Delta$ and $v \in \Cb$. 

This explicit description implies the following localization result: 

\begin{observation}\label{obs:disk_local} Suppose $\xi \in \partial \Delta$ and $U$ is a neighborhood of $\xi$. Then for any $\delta > 0$ there exists a neighborhood $V \Subset U$ so that 
\begin{align*}
k_{U \cap \Delta}(x;v) \leq e^{\delta} k_{\Delta}(x;v)
\end{align*}
and 
\begin{align*}
K_{U \cap \Delta}(x,y) \leq e^{\delta} K_\Delta(x, y)
\end{align*}
for all $x,y \in V \cap \Delta$ and $v \in \Cb$. 
\end{observation}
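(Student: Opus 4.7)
My plan is to exploit the Möbius automorphism $\phi_x(w) = (w+x)/(1+\overline{x}w) \in \Aut(\Delta)$, which satisfies $\phi_x(0) = x$ and $\phi_x'(0) = 1-\abs{x}^2$. Direct computation gives
\[
\phi_x(w) - x = \frac{w\,(1-\abs{x}^2)}{1+\overline{x}w},
\]
and hence for any $r \in (0,1)$ the diameter estimate $\diam\!\big(\phi_x(r\overline{\Delta})\big) \leq 2r(1-\abs{x}^2)/(1-r)$, which tends to $0$ as $x \to \xi$.

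Fix $\delta > 0$ and choose $r \in (0,1)$ with $1/r \leq e^\delta$. Using the estimate above, I would pick $V$ to be a small horodisk at $\xi$ --- a Euclidean disk in $\Delta$ internally tangent to $\partial\Delta$ at $\xi$, with closure in $U$, and small enough that $\phi_x(r\overline{\Delta}) \subset U$ for every $x \in V\cap\Delta$. The infinitesimal inequality then follows by plugging the test map $f(w) = \phi_x(re^{i\arg v}w) \in \Hol(\Delta, U\cap\Delta)$ into the definition of $k_{U\cap\Delta}$: indeed $f(0) = x$ and $df_0(\zeta) = v$ for $\zeta = \abs{v}/(r(1-\abs{x}^2))$, so
\[
k_{U\cap\Delta}(x;v) \leq \frac{\abs{v}}{r(1-\abs{x}^2)} = \frac{1}{r}\,k_\Delta(x;v) \leq e^\delta k_\Delta(x;v).
\]

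For the distance inequality I would integrate the infinitesimal bound along the hyperbolic geodesic $\gamma:[a,b] \to \Delta$ from $x$ to $y$, provided $\gamma$ stays inside $V$. This is exactly why I insist that $V$ be a horodisk: transporting to the upper half-plane via the Cayley transform sending $\xi \to \infty$, $V$ becomes $\{\Imaginary w > c\}$, and the explicit description of hyperbolic geodesics in $\Hc$ (vertical lines and Euclidean semicircles perpendicular to $\Rb$) shows that the infimum of $\Imaginary w$ along any geodesic arc is attained at an endpoint --- so $\{\Imaginary w > c\}$ is hyperbolically convex. Hence $\gamma \subset V$, and integrating gives
\[
K_{U\cap\Delta}(x,y) \leq \int_a^b k_{U\cap\Delta}(\gamma(t);\gamma'(t))\,dt \leq e^\delta \int_a^b k_\Delta(\gamma(t);\gamma'(t))\,dt = e^\delta K_\Delta(x,y).
\]

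The only substantive calculations are the explicit diameter bound for $\phi_x(r\overline{\Delta})$ and the geodesic convexity of horodisks; both are short direct computations, so I do not expect any serious obstacle.
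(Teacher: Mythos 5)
The paper records this as an Observation without proof, so there is no argument to compare against; I will assess yours directly. Your infinitesimal estimate via the test map $\phi_x(re^{i\arg v}w)$, the diameter bound for $\phi_x(r\overline\Delta)$, and the geodesic-convexity argument for horodisks are all correct. The problem is the choice of $V$: a Euclidean disk internally tangent to $\partial\Delta$ at $\xi$ does not contain $\xi$ and is therefore not a neighborhood of $\xi$. The Observation needs $V$ to be an honest neighborhood of $\xi$ in $\Cb$ --- and this is exactly how it is invoked in Sections~\ref{sec:kob_metric} and~\ref{sec:kob_dist}, where the sets $V_i$ arise as coordinate factors of a product neighborhood $\Psi(V) = V_1\times\cdots\times V_d$ of $\Psi(\xi)$, so each $V_i$ must contain $f_i(\xi)\in\partial\Delta$ in its interior. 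Points $x\to\xi$ that approach tangentially lie in $V_i\cap\Delta$ for every such neighborhood $V_i$, but eventually leave any fixed horodisk, so your estimate does not cover them.

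The fix is minor and preserves your method. Take $V=B(\xi,\epsilon)$. Your infinitesimal bound already works there: since $1-\abs{x}^2\le 2\abs{x-\xi}$, one gets $\abs{\phi_x(w)-\xi}\le\tfrac{1+r}{1-r}\abs{x-\xi}$ for $\abs{w}\le r$, so $\phi_x(r\overline\Delta)\subset U$ for all $x\in B(\xi,\epsilon)\cap\Delta$ once $\epsilon$ is small enough; no horodisk restriction is needed at this step. For the distance bound you need $B(\xi,\epsilon)\cap\Delta$ to be hyperbolically convex, and the same Cayley-transform computation you did for horodisks works here: taking $\xi=1$ and $\tau(z)=i\tfrac{1+z}{1-z}$, one finds $\tau(B(1,\epsilon)\cap\Delta)=\left\{w\in\Hc:\abs{w+i}>2/\epsilon\right\}$, and along any geodesic $\gamma(\theta)=c+\rho e^{i\theta}$ (or vertical line $a+it$) the quantity $\abs{\gamma(\theta)+i}^2 = c^2+\rho^2+1+2c\rho\cos\theta+2\rho\sin\theta$ has a single interior critical point in $(0,\pi)$ which is a maximum, so its minimum over any subarc is attained at an endpoint --- the same unimodality you used for $\Imaginary w$. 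With this change of $V$ the rest of your argument, integrating the pointwise bound along the hyperbolic geodesic, goes through verbatim.
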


\subsection{The Kobayashi metric on products}

If 
\begin{align*}
 \Omega = \Omega_1 \times \dots \times \Omega_k \subset \Cb^{d_1} \times \dots \times \Cb^{d_k}
\end{align*}
is a product of domains, it is straightforward to verify that 
\begin{align*}
k_\Omega(x;v) \leq \max_{i=1, \dots, k} k_{\Omega_i}(x_i, v_i)
\end{align*}
for all $x=(x_1, \dots, x_k) \in \Omega$ and $v=(v_1, \dots, v_k) \in \Cb^{d_1} \times \dots \times \Cb^{d_k}$. Using Proposition~\ref{prop:dist_decrease} and the natural projections, one also has 
\begin{align*}
k_\Omega(x;v) \geq \max_{i=1, \dots, k} k_{\Omega_i}(x_i, v_i)
\end{align*}
for all $x=(x_1, \dots, x_k) \in \Omega$ and $v=(v_1, \dots, v_k) \in \Cb^{d_1} \times \dots \times \Cb^{d_k}$. Thus we have the following:

\begin{observation}
With the notation above, 
\begin{align*}
k_\Omega(x;v) = \max_{i=1, \dots, k} k_{\Omega_i}(x_i, v_i)
\end{align*}
for all $x=(x_1, \dots, x_k) \in \Omega$ and $v=(v_1, \dots, v_k) \in \Cb^{d_1+\dots+d_k}$.
\end{observation}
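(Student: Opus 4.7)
The plan is to verify the two inequalities separately, since their combination gives the claimed equality. Both bounds are essentially already sketched in the paragraph preceding the statement, and neither direction should present any real obstacle; the main content is simply checking that the definitions work out.

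For the lower bound, I would invoke the distance-decreasing property (Proposition~\ref{prop:dist_decrease}) applied to each of the coordinate projections $\pi_i : \Omega \to \Omega_i$, which are holomorphic. Since $\pi_i(x) = x_i$ and $d(\pi_i)_x(v) = v_i$, the proposition gives $k_{\Omega_i}(x_i; v_i) \leq k_\Omega(x; v)$ for every $i$, and taking the maximum over $i$ yields the desired lower bound.

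For the upper bound, I would exhibit, for any $\epsilon > 0$, a holomorphic disk through $x$ in $\Omega$ realizing the maximum up to $\epsilon$. Set $\lambda_i = k_{\Omega_i}(x_i; v_i)$ and $\lambda = \max_i \lambda_i$. By definition of the Kobayashi metric, for each $i$ there exists $g_i \in \Hol(\Delta, \Omega_i)$ with $g_i(0) = x_i$ and $d(g_i)_0(1) = v_i/(\lambda_i + \epsilon)$. Now define $f : \Delta \to \Omega$ by
\begin{align*}
f(z) = \bigl(g_1(\alpha_1 z), \dots, g_k(\alpha_k z)\bigr), \qquad \alpha_i = \frac{\lambda_i + \epsilon}{\lambda + \epsilon}.
\end{align*}
Since $|\alpha_i| \leq 1$ each factor genuinely maps into $\Omega_i$, so $f \in \Hol(\Delta, \Omega)$, with $f(0) = x$ and $d(f)_0(\lambda + \epsilon) = v$. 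Hence $k_\Omega(x; v) \leq \lambda + \epsilon$, and letting $\epsilon \to 0$ gives $k_\Omega(x; v) \leq \lambda$.

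The only mild subtlety is the rescaling trick used in the upper bound to stitch the individual extremal disks together into a single disk in $\Omega$, but this is standard and requires nothing beyond the linearity of $d(f)_0$ in the coordinate factors; there is no genuine obstacle in the argument.
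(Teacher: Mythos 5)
Your argument is correct and matches what the paper has in mind: the lower bound via Proposition~\ref{prop:dist_decrease} applied to the coordinate projections, and the upper bound by splicing near-extremal disks $g_i$ for each factor into a single disk $f(z)=(g_1(\alpha_1 z),\dots,g_k(\alpha_k z))$ with the rescaling $\alpha_i=(\lambda_i+\epsilon)/(\lambda+\epsilon)\le 1$. The paper leaves the upper bound as ``straightforward to verify''; you have simply written out the standard verification it is alluding to.
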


\subsection{The Kobayashi metric on analytic polyhedron}

For analytic polyhedron the Kobayashi pseudo-distance is Cauchy complete:

\begin{proposition}\label{prop:cauchy_complete}
Suppose that $\Omega$ is a bounded analytic polyhedron. Then $(\Omega, K_\Omega)$ is a Cauchy complete metric space. 
\end{proposition}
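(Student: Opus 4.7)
The plan is to push a Cauchy sequence in $(\Omega, K_\Omega)$ through the holomorphic map $F = (f_1, \ldots, f_N) : \Omega \to \Delta^N$ built from the defining functions and exploit the already-recorded completeness of the Kobayashi distance on the polydisk. By Proposition~\ref{prop:dist_decrease} and the product formula for the Kobayashi metric on polydisks,
$$\max_{1 \leq i \leq N} K_\Delta(f_i(x), f_i(y)) = K_{\Delta^N}(F(x), F(y)) \leq K_\Omega(x,y)$$
for all $x,y \in \Omega$. Since $(\Delta, K_\Delta)$ is isometric to the complete Poincar\'e disk, any $K_\Omega$-Cauchy sequence $(x_n)$ in $\Omega$ satisfies that $(F(x_n))$ is $K_{\Delta^N}$-Cauchy and hence converges, so each $f_i(x_n) \to w_i$ for some $w_i \in \Delta$. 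In particular there exist $\epsilon > 0$ and $n_0$ such that $\abs{f_i(x_n)} \leq 1-\epsilon$ for every $i$ and every $n \geq n_0$.

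Next, I would use this uniform bound to trap the tail of $(x_n)$ in a compact subset of $\Omega$. The set
$$K_\epsilon := \{ z \in U : \abs{f_i(z)} \leq 1-\epsilon \text{ for } i = 1, \ldots, N\}$$
is closed in $U$, contained in $\Omega$, and contains $x_n$ for $n \geq n_0$; using that $\Omega$ is bounded and $U$ may be taken to be a neighborhood of $\overline{\Omega}$, the set $K_\epsilon$ is a compact subset of $\Omega$. So $(x_n)$ has a subsequence $x_{n_k}$ converging in the Euclidean topology to some $x_\infty \in K_\epsilon \subset \Omega$. To upgrade this to $K_\Omega$-convergence, pick a small Euclidean ball $B = B(x_\infty, r) \Subset \Omega$; then by Proposition~\ref{prop:dist_decrease},
$$K_\Omega(x_{n_k}, x_\infty) \leq K_B(x_{n_k}, x_\infty) \longrightarrow 0,$$
and combined with the Cauchy property of $(x_n)$ this gives $x_n \to x_\infty$ in $K_\Omega$.

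There is no serious obstacle in this argument; note in particular that the map $F$ need not be injective, since only the distance-decreasing property is used. The only mildly delicate point is the transition from Euclidean convergence of the subsequence to $K_\Omega$-convergence, which is handled by the trivial local upper bound $K_\Omega \leq K_B$ on an inscribed Euclidean ball — a standard consequence of the distance-decreasing principle applied to the inclusion $B \hookrightarrow \Omega$.
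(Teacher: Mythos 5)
Your proof is correct, and it takes a more hands-on route than the paper's. The common key observation is the same: the distance-decreasing property applied to the defining functions forces a $K_\Omega$-Cauchy sequence (or, in the paper's version, a $K_\Omega$-bounded set) to stay at a definite distance from $\partial\Omega$, because $(\Delta, K_\Delta)$ is complete. From that point the two arguments diverge. The paper packages the conclusion abstractly: it deduces that $(\Omega, K_\Omega)$ is a \emph{proper} length space (bounded sets relatively compact) and then invokes the Hopf--Rinow theorem for length spaces to get Cauchy completeness. You instead run the argument directly on a given Cauchy sequence: trap its tail in a compact $K_\epsilon \subset \Omega$, extract a Euclidean-convergent subsequence, and upgrade to $K_\Omega$-convergence via the local estimate $K_\Omega \le K_B$ on an inscribed ball, finishing with the standard ``Cauchy plus convergent subsequence implies convergent'' fact. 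Your route is more elementary (no Hopf--Rinow) and also makes visible the equivalence of the Euclidean and $K_\Omega$ topologies; the paper's route is shorter once the length-space machinery is assumed. One small point of care in your argument: as written, $K_\epsilon$ need not be compact if $U$ is large, so you should replace it with $K_\epsilon \cap \overline{\Omega}$ (which is compact since $\overline{\Omega}$ is, and is contained in $\Omega$ because no boundary point of $\Omega$ satisfies $\max_i|f_i| \le 1-\epsilon$). Also, you quote the product formula for the Kobayashi \emph{distance} on $\Delta^N$, which the paper only records for the infinitesimal metric; this is standard, but you could sidestep it entirely by applying the distance-decreasing property to each $f_i$ separately.
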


\begin{proof}
Since $\Omega$ is bounded, $(\Omega, K_\Omega)$ is a metric space, see for instance Corollaries 2.3.2 and 2.3.6 in~\cite{A1989}. Now if $f:\Omega \rightarrow \Delta$ is holomorphic Proposition~\ref{prop:dist_decrease}  implies that
\begin{align*}
K_\Delta(f(z), f(w)) \leq K_\Omega(z,w)
\end{align*}
for all $z, w \in \Omega$. So, since $\Omega$ is an analytic polyhedron, the metric space $(\Omega, K_\Omega)$ is proper (that is, bounded sets in $(\Omega, K_\Omega)$ are relatively compact). But by definition $(\Omega, K_\Omega)$ is a length space and so by the Hopf-Rinow theorem for length spaces, see for instance Corollary 3.8 in Chapter I of~\cite{BH1999}, $(\Omega, K_\Omega)$ is Cauchy complete. 
\end{proof}
 
\subsection{Rescaling polydisks}\label{subsec:rescaling_disks} 

As mentioned in the introduction, we will use the rescaling method from the convex case in the proof of Theorem~\ref{thm:main2}. In the case of polydisks, this procedure is very explicit and in this subsection we will describe only the observations we need. For a general discussion of rescaling methods see~\cite{F1989, P1991, F1991, K2004, KK2008}.

Suppose that $w_n=\left(w_n^{(1)}, \dots, w_n^{(r)}\right)$ is a sequence in $\Delta^r$ and 
\begin{align*}
\lim_{n \rightarrow \infty} w_n =\xi \in ( \partial \Delta)^r.
\end{align*}
Next consider affine map $A_n \in \Aff(\Cb^r)$ given by 
\begin{align*}
A_n(z_1, \dots, z_r) =\left(  \lambda_n^{(1)}\left( z_1 - \frac{w_n^{(1)}}{\abs{w_n^{(1)}}} \right), \dots, \ \lambda_n^{(r)}\left( z_r - \frac{w_n^{(r)}}{\abs{w_n^{(r)}}} \right) \right)
\end{align*}
where 
\begin{align*}
\lambda_n^{(i)} = \frac{i\abs{w_n^{(i)}}}{w_n^{(i)}\left(1-\abs{w_n^{(i)}}\right)}.
\end{align*}

Then $A_n(w_n) = (i,\dots, i)$. Since
\begin{align*}
\lim_{n \rightarrow \infty} \abs{ w_n^{(i)}} = 1 \text{ for } 1 \leq i \leq r,
\end{align*}
it is easy to see that $A_n(\Delta^r)$ converges in the local Hausdorff topology to $\Hc^r$. A straight-forward computation shows that
\begin{align*}
k_{\Hc^r}(x;v) = \lim_{n \rightarrow \infty} k_{A_n \Delta^r}(x;v)
\end{align*}
 uniformly on compact sets of $\Hc^r  \times \Cb^r$ and 
 \begin{align*}
K_{\Hc^r}(x,y) = \lim_{n \rightarrow \infty} K_{A_n \Delta^r}(x,y)
\end{align*}
 uniformly on compact sets of $\Hc^r  \times \Hc^r$.

As a consequence of this discussion we have the following:

\begin{observation}\label{obs:blow_up} With the notation above, for any $u \in \Hc^r$ there exists $u_n \in \Delta^r$ so that 
\begin{enumerate}
\item $\lim_{n \rightarrow \infty} A_n(u_n) = u$,
\item $\lim_{n \rightarrow \infty} K_{\Delta^r}(u_n, w_n) = K_{\Hc^r}(u, (i,\dots, i))$.
\end{enumerate}
\end{observation}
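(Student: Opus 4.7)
The natural choice is to set $u_n := A_n^{-1}(u)$ whenever this is defined. Since $A_n(\Delta^r)$ converges to $\mathcal{H}^r$ in the local Hausdorff topology, for any fixed $u \in \mathcal{H}^r$ the point $u$ eventually lies in $A_n(\Delta^r)$, so the preimage $u_n$ is well-defined for all sufficiently large $n$. With this choice we have $A_n(u_n) = u$ for large $n$, which immediately yields property (1).

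For property (2), observe that each $A_n$ is an affine isomorphism $\Delta^r \to A_n(\Delta^r)$, so in particular a biholomorphism, and hence an isometry between the Kobayashi distances:
\begin{align*}
K_{\Delta^r}(u_n, w_n) = K_{A_n(\Delta^r)}\bigl(A_n(u_n), A_n(w_n)\bigr) = K_{A_n(\Delta^r)}\bigl(u, (i,\dots,i)\bigr).
\end{align*}
Now $(i, \dots, i) = A_n(w_n)$ belongs to $A_n(\Delta^r)$ for every $n$, and the pair $(u, (i,\dots,i))$ lies in the compact set $\{u\} \times \{(i,\dots,i)\} \subset \mathcal{H}^r \times \mathcal{H}^r$. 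By the uniform convergence on compact subsets of $\mathcal{H}^r \times \mathcal{H}^r$ recalled in the previous subsection, namely
\begin{align*}
K_{\mathcal{H}^r}(x,y) = \lim_{n \rightarrow \infty} K_{A_n \Delta^r}(x,y),
\end{align*}
we conclude that
\begin{align*}
\lim_{n\rightarrow\infty} K_{\Delta^r}(u_n, w_n) = \lim_{n\rightarrow\infty} K_{A_n(\Delta^r)}\bigl(u,(i,\dots,i)\bigr) = K_{\mathcal{H}^r}\bigl(u,(i,\dots,i)\bigr),
\end{align*}
which is property (2).

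There is essentially no obstacle here: both properties follow mechanically from the biholomorphism-invariance of $K$ combined with the local Hausdorff convergence $A_n(\Delta^r) \to \mathcal{H}^r$ and the uniform convergence of the Kobayashi distances that was already established above. The only minor point requiring care is that $u_n$ is only defined for $n$ large enough that $u \in A_n(\Delta^r)$; for the finitely many earlier indices one may define $u_n$ arbitrarily (say $u_n = w_n$) since the conclusion only concerns limits.
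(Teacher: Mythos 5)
Your proof is correct and is exactly the argument the paper intends: the Observation is stated as ``a consequence of this discussion,'' where the discussion consists of precisely the two facts you invoke — that $A_n$, being an affine biholomorphism, is a $K$-isometry from $\Delta^r$ onto $A_n(\Delta^r)$, and that $K_{A_n\Delta^r} \to K_{\Hc^r}$ uniformly on compacta of $\Hc^r\times\Hc^r$. Choosing $u_n = A_n^{-1}(u)$ (for $n$ large enough that $u \in A_n(\Delta^r)$) and noting $A_n(w_n)=(i,\dots,i)$ then gives both conclusions immediately, as you wrote.
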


In the context of the above observation the fact that 
\begin{align*}
\lim_{n \rightarrow \infty} K_{\Delta^r}(u_n, w_n) < \infty
\end{align*}
implies that 
\begin{align*}
\lim_{n \rightarrow \infty} \norm{u_n - w_n} = 0.
\end{align*}

\section{Local coordinates near a boundary point}

In this section we construct useful coordinates around a given boundary point of a generic analytic polyhedron:

\begin{proposition}\label{prop:chart}
Suppose that $\Omega \subset \Cb^d$ is a bounded generic analytic polyhedron with generic defining functions $f_1, \dots, f_N:U \rightarrow \Cb$. If $\xi \in \partial \Omega$ and 
\begin{align*}
\{ i_1, \dots, i_r\} =  \{ i : \abs{f_i(\xi)}=1\},
\end{align*}
then there exists a neighborhood $\Oc$ of $\xi$ and holomorphic maps $\Psi:\Oc \rightarrow \Cb^d$ and $\psi: \Oc \rightarrow \Cb^{d-r}$ so that 
\begin{enumerate}
\item $\Psi$ is a biholomorphism onto its image,
\item $\Psi(z) = (f_{i_1}(z), \dots, f_{i_r}(z), \psi(z))$ for all $z \in \Oc$, 
\item $\Psi(\Oc) = \Oc_1 \times \dots \times \Oc_d$ for some open sets $\Oc_i \subset \Cb$,
\item $\Psi(\Oc \cap \Omega) = \Psi(\Oc) \cap \Delta^d = (\Oc_1 \cap \Delta) \times \dots \times (\Oc_r \cap \Delta) \times \Oc_{r+1} \times \dots \times \Oc_d$.
\end{enumerate}
\end{proposition}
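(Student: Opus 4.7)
The plan is to construct $\Psi$ by completing $(f_{i_1}, \dots, f_{i_r})$ to a local holomorphic chart using the holomorphic inverse function theorem, and then shrink the domain until the remaining inequalities $\abs{f_i} < 1$ for $i \notin \{i_1, \dots, i_r\}$ become automatic. Concretely, by the generic hypothesis the differentials $d(f_{i_1})_\xi, \dots, d(f_{i_r})_\xi$ span an $r$-dimensional subspace of $(\Cb^d)^*$; I would pick $\Cb$-linear functionals $L_1, \dots, L_{d-r} : \Cb^d \rightarrow \Cb$ extending this to a basis of $(\Cb^d)^*$, set $\psi(z) = (L_1(z-\xi), \dots, L_{d-r}(z-\xi))$, and define $\Psi(z) = (f_{i_1}(z), \dots, f_{i_r}(z), \psi(z))$. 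By construction $d(\Psi)_\xi$ is a $\Cb$-linear isomorphism of $\Cb^d$, so the holomorphic inverse function theorem produces a neighborhood of $\xi$ on which $\Psi$ is biholomorphic onto an open subset of $\Cb^d$ containing $\Psi(\xi) = (f_{i_1}(\xi), \dots, f_{i_r}(\xi), 0, \dots, 0)$.

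To secure the product structure in (3), I would use that any open neighborhood of $\Psi(\xi)$ contains a polydisk $D_1 \times \dots \times D_d$ with $D_j$ an open disk centered at the $j$-th coordinate of $\Psi(\xi)$; since the last $d-r$ of these coordinates equal $0$, the radii of $D_{r+1}, \dots, D_d$ can be chosen less than $1$, forcing $D_j \subset \Delta$ for $j > r$. Setting $\Oc = \Psi^{-1}(D_1 \times \dots \times D_d)$ and $\Oc_j = D_j$ then yields conditions (1), (2), and (3) simultaneously.

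For (4), the decisive observation is that $\abs{f_i(\xi)} < 1$ for every $i \notin \{i_1, \dots, i_r\}$, so by continuity I can shrink $\Oc$ once more so that $\abs{f_i(z)} < 1$ for all such $i$ and all $z \in \Oc$. With that done, the condition $z \in \Oc \cap \Omega$ collapses to $\abs{f_{i_j}(z)} < 1$ for $j = 1, \dots, r$, which is the same as the first $r$ coordinates of $\Psi(z)$ lying in $\Delta$. Combined with $\Oc_{r+1}, \dots, \Oc_d \subset \Delta$, this gives $\Psi(\Oc \cap \Omega) = \Psi(\Oc) \cap \Delta^d = (\Oc_1 \cap \Delta) \times \dots \times (\Oc_r \cap \Delta) \times \Oc_{r+1} \times \dots \times \Oc_d$. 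The whole proof is essentially bookkeeping around the inverse function theorem; the only steps needing care are the two successive shrinkings (first to produce a product image in $\Cb^d$, then to eliminate the non-active defining inequalities), together with the observation that the last $d-r$ coordinates of $\Psi(\xi)$ are exactly $0$, which is what permits $\Oc_{r+1}, \dots, \Oc_d$ to be placed inside $\Delta$.
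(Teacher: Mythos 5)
Your proposal is correct and follows essentially the same strategy as the paper: complete the active defining functions $(f_{i_1},\dots,f_{i_r})$ to a local holomorphic chart around $\xi$ via the inverse function theorem, shrink to a polydisk image, and shrink further so the non-active inequalities $\abs{f_i}<1$, $i\notin\{i_1,\dots,i_r\}$, become automatic. The only substantive differences are cosmetic: the paper invokes an abstract rank-theorem/product-coordinates statement to produce $\Psi$ and $\psi$, whereas you build $\psi$ explicitly from affine functionals $L_j(z-\xi)$; and by normalizing $\psi(\xi)=0$ you make it transparent that the factors $\Oc_{r+1},\dots,\Oc_d$ can be placed inside $\Delta$, a point the paper leaves implicit when asserting the equality $\Psi(\Oc)\cap\Delta^d=(\Oc_1\cap\Delta)\times\dots\times(\Oc_r\cap\Delta)\times\Oc_{r+1}\times\dots\times\Oc_d$ in item (4). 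Your write-up is, if anything, slightly more careful on that last point.
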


\begin{proof} For a neighborhood $\Oc$ of $\xi$ define the map $F:\Oc \rightarrow \Cb^r$ by
 \begin{align*}
F(z) = (f_{i_1}(z), \dots, f_{i_r}(z)).
\end{align*}
By shrinking $\Oc$ we may assume that 
\begin{align*}
 \Omega \cap \Oc = F^{-1}(\Delta^r \cap F(\Oc)).
\end{align*}
Since the vectors 
\begin{align*}
\nabla f_{i_1}(\xi), \dots, \nabla f_{i_r}(\xi)
\end{align*}
are $\Cb$-linearly independent we see that $d(F)_\xi$ has full rank. Hence, after possibly shrinking $\Oc$, we can find domains $V \subset \Cb^r$, $W \subset \Cb^{d-r}$ and a biholomorphism $\Psi: \Oc \rightarrow V \times W$ so that 
\begin{align*}
F = \pi_1 \circ \Psi
\end{align*}
 where $\pi_1: V \times W \rightarrow V$ is the natural projection. Thus there exists a holomorphic map  $\psi: \Oc \rightarrow \Cb^{d-r}$ so that
\begin{align*}
 \Psi(z) = (f_{i_1}(z), \dots, f_{i_r}(z), \psi(z))
\end{align*}
for all $z \in \Oc$. Moreover 
\begin{align*}
 \Psi^{-1}\left(\left(\Delta^r \cap V\right) \times W\right) = F^{-1}(\Delta^r \cap F(\Oc)) = \Omega \cap \Oc.
\end{align*}
Finally by shrinking $\Oc$ again we may assume that $\Psi(\Oc) = \Oc_1 \times \dots \times \Oc_d$ for some open sets $\Oc_i \subset \Cb$.
\end{proof}

\section{Estimates for the Kobayashi metric}\label{sec:kob_metric}

For the rest of the section suppose that $\Omega \subset \Cb^d$ is a bounded generic analytic polyhedron with generic defining functions $f_1, \dots, f_N:U \rightarrow \Cb$. 

Using Proposition~\ref{prop:dist_decrease} we immediately obtain the following lower bound for the Kobayashi metric: 

\begin{proposition}
With the notation above, 
\begin{align*}
\max_{i=1,\dots, N} k_{\Delta}(f_i(z); d(f_i)_z(v)) \leq k_\Omega(z;v).
\end{align*}
for all $z \in \Omega$ and $v \in \Cb^d$. 
\end{proposition}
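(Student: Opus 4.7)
The plan is to apply the distance-decreasing property of the Kobayashi metric (Proposition~\ref{prop:dist_decrease}) to each of the defining functions $f_i$, one at a time, and then take the maximum of the resulting inequalities. There is essentially no obstacle here; the content is that each $f_i$ maps $\Omega$ into $\Delta$ by definition of a generic analytic polyhedron, so each $f_i$ is a genuine holomorphic map $\Omega \to \Delta$ in the sense required by Proposition~\ref{prop:dist_decrease}.

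Concretely, fix $z \in \Omega$ and $v \in \Cb^d$. For each $i \in \{1, \dots, N\}$, the function $f_i|_\Omega : \Omega \to \Delta$ is holomorphic, so Proposition~\ref{prop:dist_decrease} yields
\begin{align*}
k_{\Delta}\bigl(f_i(z); d(f_i)_z(v)\bigr) \leq k_{\Omega}(z;v).
\end{align*}
Since this bound holds for every $i$, taking the maximum over $i = 1, \dots, N$ gives the desired inequality.

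The only subtlety worth flagging is the need to confirm that $f_i(\Omega) \subset \Delta$, which is immediate from the definition $\Omega = \{z \in U : |f_i(z)| < 1 \text{ for } i = 1, \dots, N\}$. No estimate on the derivatives of the $f_i$ is required, and no properties of \emph{genericity} of the defining functions are used at this stage; genericity will only enter later when the matching upper bound is established in Section~\ref{sec:kob_dist}. In that sense this proposition is the easy half of the two-sided estimate that will drive the rescaling argument.
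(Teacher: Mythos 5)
Your proof is correct and matches the paper's argument exactly: the paper cites Proposition~\ref{prop:dist_decrease} applied to each $f_i \colon \Omega \to \Delta$ and states the bound as immediate, which is precisely what you spell out. (Minor aside: the matching upper bound for the \emph{metric} is Theorem~\ref{thm:kob_metric} in Section~\ref{sec:kob_metric}, not Section~\ref{sec:kob_dist}, which treats the distance instead.)
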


In this section we will prove the following upper bound:

\begin{theorem}\label{thm:kob_metric}
With the notation above, there exists a constant $C \geq 1$ so that 
\begin{align*}
k_\Omega(z;v) \leq C \left( \norm{v}+\max_{i=1,\dots, N} k_{\Delta}(f_i(z); d(f_i)_z(v)) \right)
\end{align*}
for all $z \in \Omega$ and $v \in \Cb^d$. 
\end{theorem}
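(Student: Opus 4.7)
The plan is to argue locally via the coordinates constructed in Proposition~\ref{prop:chart}, use the product formula (Observation~2.3) together with the disk localization (Observation~\ref{obs:disk_local}), and then glue together finitely many local estimates by compactness of $\overline{\Omega}$.

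First I would handle the ``bulk'' case: fix $\epsilon > 0$ and let $\Omega_{\epsilon} = \{ z \in \Omega : \abs{f_i(z)} < 1-\epsilon \text{ for all } i\}$. Since $\Omega$ is bounded there is a uniform $r = r(\epsilon) > 0$ so that $B(z,r) \subset \Omega$ for every $z \in \Omega_\epsilon$; the distance-decreasing property applied to the inclusion $B(z,r) \hookrightarrow \Omega$ then gives $k_\Omega(z;v) \leq \norm{v}/r$, which is already stronger than what is needed. So it suffices to bound $k_\Omega$ on a neighborhood of $\partial \Omega$.

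Next, fix $\xi \in \partial \Omega$ with $\Ic(\xi) = \{i_1, \dots, i_r\}$, and apply Proposition~\ref{prop:chart} to obtain a biholomorphism $\Psi : \Oc \rightarrow \Oc_1 \times \dots \times \Oc_d$ with $\Psi(z) = (f_{i_1}(z), \dots, f_{i_r}(z), \psi(z))$ and $\Psi(\Oc \cap \Omega) = (\Oc_1 \cap \Delta) \times \dots \times (\Oc_r \cap \Delta) \times \Oc_{r+1} \times \dots \times \Oc_d$. Shrinking $\Oc$ to a smaller neighborhood $\Oc^\prime$ of $\xi$, the disk localization (Observation~\ref{obs:disk_local}) applied in each of the first $r$ factors gives a constant $C_1 \geq 1$ (uniform on $\Oc^\prime$) with
\begin{align*}
k_{\Oc_j \cap \Delta}(w; \zeta) \leq C_1 \, k_\Delta(w;\zeta) \quad (j=1,\dots,r),
\end{align*}
while for $j = r+1, \dots, d$ the open set $\Oc_j \subset \Cb$ is bounded, so $k_{\Oc_j}(w;\zeta) \leq C_2 \abs{\zeta}$ for some $C_2$. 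Using Observation~2.3 (the product formula) on $\Psi(\Oc \cap \Omega)$ together with the distance-decreasing property for the inclusion $\Oc \cap \Omega \hookrightarrow \Omega$, I then get for $z \in \Oc^\prime \cap \Omega$
\begin{align*}
k_\Omega(z;v) \leq k_{\Psi(\Oc \cap \Omega)}\bigl(\Psi(z); d(\Psi)_z(v)\bigr) \leq C_1 \max_{j} k_\Delta\bigl(f_{i_j}(z); d(f_{i_j})_z(v)\bigr) + C_2 \norm{d(\psi)_z(v)}.
\end{align*}
Since $\psi$ is holomorphic on a neighborhood of $\overline{\Oc^\prime}$, the operator norm of $d(\psi)_z$ is bounded uniformly for $z \in \Oc^\prime$, which absorbs the last term into a constant multiple of $\norm{v}$. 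The max over $\{i_1,\dots,i_r\}$ is bounded by the max over all $i=1,\dots,N$, giving the desired inequality locally near $\xi$ with a constant $C_\xi$.

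Finally I would cover $\overline{\Omega} \setminus \Omega_\epsilon$ by finitely many such neighborhoods $\Oc^\prime_{\xi_1}, \dots, \Oc^\prime_{\xi_m}$ (using compactness of $\overline{\Omega}$), take $C$ to be the maximum of the corresponding constants together with the bulk constant, and obtain the stated global bound. The main thing to verify carefully is that the constants in the disk localization and in the bound $\norm{d(\psi)_z(v)} \leq C \norm{v}$ can indeed be chosen uniformly on each $\Oc^\prime$; this is where shrinking $\Oc$ to a relatively compact sub-neighborhood is essential, but beyond that the argument is a straightforward assembly of the tools already recorded in Sections~2 and~3.
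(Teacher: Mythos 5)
Your proposal follows essentially the same route as the paper's proof: a local estimate near each boundary point $\xi$ obtained by combining Proposition~\ref{prop:chart} with the disk localization (Observation~\ref{obs:disk_local}) and the product formula, followed by a compactness argument to patch the local constants and handle the interior, where $k_\Omega(z;v) \lesssim \norm{v}$. The one wrinkle is the order of quantifiers at the end: you fix $\epsilon$ before choosing the cover, but there is no a priori reason the boundary neighborhoods $\Oc'_{\xi_1}, \dots, \Oc'_{\xi_m}$ (which may be small) cover all of $\overline{\Omega} \setminus \Omega_\epsilon$; the fix is either to choose the finite cover of $\partial\Omega$ first and then shrink $\epsilon$ accordingly, or, as the paper does, simply observe that $\Omega \setminus \bigcup_j \Oc'_{\xi_j}$ is already a compact subset of $\Omega$ and bound $k_\Omega$ there directly.
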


\begin{remark} With no additional assumptions on the defining functions it is necessary to have the $\norm{v}$ term in the estimate above. In particular, it is possible for there to exist some $z_0 \in \Omega$ where 
\begin{align*}
\Span_{\Cb} \left\{ \nabla f_1(z_0), \dots, \nabla f_N(z_0) \right\} \neq \Cb^d.
\end{align*}
Then there would exist some non-zero $v \in \Cb^d$ so that 
\begin{align*}
\max_{i=1,\dots, N} k_{\Delta}(f_i(z_0); d(f_i)_{z_0}(v)) = 0.
\end{align*}
\end{remark}

We begin the proof of Theorem~\ref{thm:kob_metric} by proving a local version:
\begin{lemma}
With the notation above, for any $\xi \in \partial \Omega$ there exists a neighborhood $V$ of $\xi$ and a constant $C \geq 1$ so that 
\begin{align*}
k_\Omega(z;v) \leq C \left(\norm{v} +  \max_{i \in \Ic(\xi)} k_{\Delta}(f_i(z); d(f_i)_z(v)) \right)
\end{align*}
for all $z \in \Omega \cap V$ and $v \in \Cb^d$. 
\end{lemma}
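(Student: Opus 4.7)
The plan is to reduce the estimate to a product of one-dimensional estimates using the local chart supplied by Proposition~\ref{prop:chart}. Let $\xi \in \partial \Omega$ with $\Ic(\xi) = \{i_1, \dots, i_r\}$, and apply Proposition~\ref{prop:chart} to obtain a neighborhood $\Oc$ of $\xi$, open sets $\Oc_1, \dots, \Oc_d \subset \Cb$, and a biholomorphism $\Psi:\Oc \to \Oc_1 \times \dots \times \Oc_d$ of the form $\Psi(z) = (f_{i_1}(z), \dots, f_{i_r}(z), \psi(z))$ satisfying
\begin{align*}
\Psi(\Oc \cap \Omega) = (\Oc_1 \cap \Delta) \times \dots \times (\Oc_r \cap \Delta) \times \Oc_{r+1} \times \dots \times \Oc_d.
\end{align*}
The distance-decreasing property applied to the inclusion $\Oc \cap \Omega \hookrightarrow \Omega$ gives $k_\Omega(z;v) \leq k_{\Oc \cap \Omega}(z;v)$ for $z \in \Oc \cap \Omega$, and biholomorphism invariance transports this to a Kobayashi metric on the product.

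Next I would invoke the product formula (the Observation between Proposition~\ref{prop:cauchy_complete} and the rescaling subsection) to write
\begin{align*}
k_{\Psi(\Oc \cap \Omega)}(\Psi(z); d\Psi_z(v)) = \max\!\Bigl(\, \max_{1 \leq j \leq r} k_{\Oc_j \cap \Delta}\bigl(f_{i_j}(z); d(f_{i_j})_z(v)\bigr),\ \max_{r+1 \leq j \leq d} k_{\Oc_j}\bigl(\psi_j(z); d(\psi_j)_z(v)\bigr) \Bigr).
\end{align*}
For the disk factors, fix $\delta = \log 2$ and choose, via Observation~\ref{obs:disk_local} applied at each boundary point $f_{i_j}(\xi) \in \partial \Delta$, a smaller neighborhood $V \Subset \Oc$ of $\xi$ so that for every $z \in V \cap \Omega$ and every $j \leq r$,
\begin{align*}
k_{\Oc_j \cap \Delta}\bigl(f_{i_j}(z); d(f_{i_j})_z(v)\bigr) \leq 2 \, k_{\Delta}\bigl(f_{i_j}(z); d(f_{i_j})_z(v)\bigr).
\end{align*}

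For the remaining factors, I would use that $\psi(V)$ is a relatively compact subset of $\Oc_{r+1} \times \dots \times \Oc_d$, so the Kobayashi metric there is bounded above by a constant multiple of the Euclidean norm on that compact set, uniformly; combining this with the fact that $d\psi$ has bounded operator norm on $\overline{V}$ yields a constant $C_2$ with $k_{\Oc_j}(\psi_j(z); d(\psi_j)_z(v)) \leq C_2 \norm{v}$ for $z \in V \cap \Omega$, $v \in \Cb^d$, and $r+1 \leq j \leq d$. Using $\max(A,B) \leq A + B$ and combining these two bounds produces the desired estimate with constant $C = 2 + C_2$.

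The main obstacle I foresee is purely bookkeeping: one needs the neighborhood $V$ to be simultaneously (i) small enough that Observation~\ref{obs:disk_local} applies at each of the finitely many boundary points $f_{i_j}(\xi) \in \partial \Delta$ for $j = 1, \dots, r$, and (ii) small enough that $\psi(V)$ remains compactly contained in $\Oc_{r+1} \times \dots \times \Oc_d$ so that the non-boundary directions behave like a bounded domain and contribute only a multiple of $\norm{v}$. Both are available because $\Psi$ is a biholomorphism onto its image and continuous, so intersecting finitely many open preimages of appropriate shrinkings supplies the required $V$.
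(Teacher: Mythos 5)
Your argument is correct and follows essentially the same route as the paper's proof: localize via Proposition~\ref{prop:chart}, invoke Observation~\ref{obs:disk_local} for the disk factors and a compactness bound for the remaining factors, and combine with the product formula. The only differences are cosmetic choices of constants ($\delta=\log 2$ vs.\ the paper's $e^M$).
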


\begin{proof}
After relabeling the $f_i$, we may assume that $\abs{f_1(\xi)}=\dots = \abs{f_r(\xi)} =1$ and $\abs{f_i(\xi)} < 1$ when $i > r$. Then 
\begin{align*}
 \Ic(\xi) = \{1, \dots, r\}.
\end{align*}
By Proposition~\ref{prop:chart}, there exists a neighborhood $\Oc$ of $\xi$ and holomorphic maps $\Psi:\Oc \rightarrow \Cb^d$ and $\psi:\Oc \rightarrow \Cb^{d-r}$ so that:
\begin{enumerate}
\item $\Psi$ is a biholomorphism onto its image,
\item $\Psi(z) = (f_{1}(z), \dots, f_{r}(z), \psi(z))$ for all $z \in \Oc$, 
\item $\Psi(\Oc) = \Oc_1 \times \dots \times \Oc_d$ for some open sets $\Oc_i \subset \Cb$,
\item $\Psi(\Oc \cap \Omega) = \Psi(\Oc) \cap \Delta^d = (\Oc_1 \cap \Delta) \times \dots \times (\Oc_r \cap \Delta) \times \Oc_{r+1} \times \dots \times \Oc_d$.
\end{enumerate}
Using Observation~\ref{obs:disk_local} we can find a neighborhood $V \Subset \Oc$ of $\xi$ and some $M>0$ so that 
\begin{enumerate}
\item $\Psi(V) = V_1 \times \dots \times V_d$ for some open sets $V_i \subset \Cb$,
\item for $1 \leq i \leq r$ we have 
\begin{align*}
k_{\Oc_i \cap \Delta}(z;v) \leq e^{M} k_{\Delta}(z;v)
\end{align*}
for $z \in V_i \cap \Delta$ and $v \in \Cb$, and 
\item for $r < i$ we have 
\begin{align*}
k_{\Oc_i}(z;v) \leq e^{M}\norm{v}
\end{align*}
for $z \in V_i$ and $v \in \Cb$. 
\end{enumerate}

Now let $\Psi_i$ be the $i^{th}$ coordinate function of $\Psi$. Then for $z \in \Oc \cap \Omega$ and $v \in \Cb^d$
\begin{align*}
k_\Omega(z;v) 
& \leq k_{\Oc \cap \Omega}(z;v) \\
&= \max\left\{  \max_{i=1, \dots, r} k_{\Oc_i \cap \Delta}(\Psi_i(z); d(\Psi_i)_z(v)),  \max_{i=r+1, \dots, N} k_{\Oc_i}(\Psi_i(z); d(\Psi_i)_z(v))\right\}.
\end{align*}
For $1 \leq i \leq r$ and $z \in V \cap \Omega$ we have
 \begin{align*}
 k_{\Oc_i \cap \Delta}(\Psi_i(z); d(\Psi_i)_z(v)) = k_{\Oc_i \cap \Delta}(f_i(z); d(f_i)_z(v))  \leq e^{M} k_\Delta(f_i(z); d(f_i)_z(v)).
 \end{align*}
Since $\overline{V} \subset \Oc$ there exists $C_0\geq1$ so that 
\begin{align*}
\abs{  d(\Psi_i)_z(v)} \leq C_0\norm{v}
\end{align*}
for all $z \in V$ and $v \in \Cb^d$. So for $i > r$, $z \in V$, and $v \in \Cb^d$ we have: 
 \begin{align*}
 k_{\Oc_i }(\Psi_i(z); d(\Psi_i)_z(v)) \leq e^M \norm{d(\Psi_i)_z(v)} \leq C_0 e^M \norm{v}. 
  \end{align*}

So 
\begin{align*}
k_\Omega(z;v) \leq C_0e^M \left( \norm{v}+\max_{i=1,\dots, r} k_{\Delta}(f_i(z); d(f_i)_z(v)) \right)
\end{align*}
for all $z \in V \cap \Omega$ and $v \in \Cb^d$. 
\end{proof}

\begin{proof}[Proof of Theorem~\ref{thm:kob_metric}]
Now for every $\xi \in \partial \Omega$ there exists a neighborhood $V_\xi$ of $\xi$ and a constant $C_\xi \geq 1$ so that 
\begin{align*}
k_\Omega(z;v) \leq C_\xi\left(  \norm{v}+\max_{i=1,\dots, N} k_{\Delta}(f_i(z); d(f_i)_z(v)) \right)
\end{align*}
for all $z \in \Omega \cap V_\xi$ and $v \in \Cb^d$. Then since $\partial \Omega$ is compact, there exists $\xi_1, \dots, \xi_k \in \partial \Omega$ so that 
\begin{align*}
\partial \Omega \subset \cup_{i=1}^k V_{\xi_i}.
\end{align*}
Since $K := \Omega \setminus  \cup_{i=1}^k V_{\xi_i}$ is compact there exists $C_0 \geq 1$ so that 
\begin{align*}
k_\Omega(z;v) \leq C_0 \norm{v}
\end{align*}
for all $z \in K$ and $v \in \Cb^d$.

Finally let $C = \max\{C_{\xi_1}, \dots, C_{\xi_k}, C_0\}$. Then 
\begin{align*}
k_\Omega(z;v) \leq C \left( \norm{v}+\max_{i=1,\dots, N} k_{\Delta}(f_i(z); d(f_i)_z(v)) \right)
\end{align*}
for all $z \in \Omega$ and $v \in \Cb^d$. 

\end{proof}

\section{Estimates for the Kobayashi distance}\label{sec:kob_dist}

For the rest of the section suppose that $\Omega \subset \Cb^d$ is a bounded generic analytic polyhedron with generic defining functions $f_1, \dots, f_N:U \rightarrow \Cb$. 

Using Proposition~\ref{prop:dist_decrease} we immediately obtain the following lower bound for the Kobayashi distance: 

\begin{proposition}\label{prop:kob_dist_lower_bd}
With the notation above, 
\begin{align*}
\max_{i=1, \dots, N} K_\Delta(f_i(z), f_i(w)) \leq K_\Omega(z,w)
\end{align*}
for all $z,w \in\Omega$.
\end{proposition}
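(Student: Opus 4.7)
The plan is to apply the distance-decreasing property (Proposition~\ref{prop:dist_decrease}) to each defining function separately and then take the maximum. The key preliminary observation is that since $\Omega = \{z \in U : \abs{f_j(z)} < 1 \text{ for } j = 1,\dots, N\}$, for every $i$ the image $f_i(\Omega)$ is contained in the unit disk $\Delta$. Hence each $f_i$ restricts to a holomorphic map $\Omega \to \Delta$, which puts it in the exact setting required by Proposition~\ref{prop:dist_decrease}.

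First I would fix $i \in \{1, \dots, N\}$ and $z, w \in \Omega$. Applying Proposition~\ref{prop:dist_decrease} to the holomorphic map $f_i : \Omega \to \Delta$ gives
\begin{align*}
K_\Delta(f_i(z), f_i(w)) \leq K_\Omega(z,w).
\end{align*}
Since the right-hand side does not depend on $i$, taking the maximum over the finite collection $i = 1, \dots, N$ yields the stated inequality. (Note that the sup is a max because the index set is finite.)

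There is no genuine obstacle here: the proposition is just a packaging of functoriality of the Kobayashi pseudo-distance along the finitely many coordinate-like holomorphic maps already chosen to define $\Omega$. The real analytic content of this section will live in the companion upper bound that parallels Theorem~\ref{thm:kob_metric}, where one must integrate the upper bound for $k_\Omega$ along carefully chosen curves to produce the matching Kobayashi distance estimate; the present lower bound is essentially a one-line consequence of the definitions.
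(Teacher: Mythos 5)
Your argument is correct and coincides exactly with the paper's reasoning: the paper states the proposition as an immediate consequence of the distance-decreasing property (Proposition~\ref{prop:dist_decrease}) applied to each holomorphic map $f_i:\Omega\to\Delta$, which is precisely what you do. Nothing further is needed.
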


In this section we will establish an upper bound on the Kobayashi distance. 

\begin{theorem}\label{thm:kob_dist_up1}
With the notation above, for any $\xi \in \partial \Omega$ there exists a neighborhood $V$ of $\xi$ and a constant $C\geq1$ so that 
\begin{align*}
K_\Omega(z,w) \leq C\left( \norm{z-w} + \max_{i \in \Ic(\xi)} K_\Delta(f_i(z), f_i(w))\right)
\end{align*}
for all $z,w \in V \cap \Omega$.
\end{theorem}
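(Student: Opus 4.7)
The plan is to parallel the proof of Theorem~\ref{thm:kob_metric}, but at the integrated level: introduce the product coordinates from Proposition~\ref{prop:chart} near $\xi$ and construct an explicit short path from $z$ to $w$ in these coordinates.

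After relabeling, assume $\Ic(\xi) = \{1, \dots, r\}$. By Proposition~\ref{prop:chart} there is a neighborhood $\Oc$ of $\xi$ and a biholomorphism $\Psi : \Oc \to \Oc_1 \times \dots \times \Oc_d$ with $\Psi(z) = (f_1(z), \dots, f_r(z), \psi(z))$ under which $\Omega \cap \Oc$ corresponds to $(\Oc_1 \cap \Delta) \times \dots \times (\Oc_r \cap \Delta) \times \Oc_{r+1} \times \dots \times \Oc_d$. Applying Observation~\ref{obs:disk_local} at each boundary point $f_i(\xi) \in \partial \Delta$ for $i \leq r$ yields $V_i \Subset \Oc_i$ so that $K_{\Oc_i \cap \Delta}(x, y) \leq 2 K_\Delta(x, y)$ for all $x, y \in V_i \cap \Delta$. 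For $i > r$ I pick any $V_i \Subset \Oc_i$ containing $\psi_i(\xi)$; since $\overline{V_i}$ is compact in the open set $\Oc_i \subset \Cb$, a standard argument (uniform Euclidean disks inside $\Oc_i$ plus the explicit Poincar\'e distance, combined with boundedness of $K_{\Oc_i}$ on $\overline{V_i} \times \overline{V_i}$) yields a constant $C_i$ with $K_{\Oc_i}(x, y) \leq C_i \abs{x - y}$ on $\overline{V_i}$. Let $V := \Psi^{-1}(V_1 \times \dots \times V_d)$.

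For $z, w \in V \cap \Omega$, write $\Psi(z) = (a_1, \dots, a_d)$ and $\Psi(w) = (b_1, \dots, b_d)$, and connect these points in $\Psi(\Omega \cap \Oc)$ by an ``L-shaped'' path made up of $d$ successive segments: on the $i$-th segment, the coordinates $j < i$ are held at $b_j$, the coordinates $j > i$ at $a_j$, and the $i$-th coordinate traces a path $\gamma_i$ from $a_i$ to $b_i$ in the $i$-th factor whose Kobayashi length is within $\epsilon$ of the factor distance. Because only one coordinate varies along each segment, the product formula for the infinitesimal Kobayashi metric (the Observation in Section~2) collapses the product-length of segment $i$ to the factor-length of $\gamma_i$. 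Summing, letting $\epsilon \to 0$, and using that $\Psi$ is an isometry for the Kobayashi pseudo-distance gives
\begin{align*}
K_{\Omega \cap \Oc}(z, w) \leq \sum_{i=1}^r K_{\Oc_i \cap \Delta}(f_i(z), f_i(w)) + \sum_{i=r+1}^d K_{\Oc_i}(\psi_i(z), \psi_i(w)).
\end{align*}

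The first sum is bounded by $2r \max_{i \in \Ic(\xi)} K_\Delta(f_i(z), f_i(w))$ via the shrinking step. For the second, since $\Psi$ is holomorphic on the relatively compact set $\overline{V} \subset \Oc$ it is Lipschitz there with some constant $L$, so $\abs{\psi_i(z) - \psi_i(w)} \leq L \norm{z - w}$ and each summand is at most $C_i L \norm{z - w}$. Combining and recalling $K_\Omega \leq K_{\Omega \cap \Oc}$ yields the theorem. The only mildly nontrivial ingredient is the Euclidean upper bound on $K_{\Oc_i}$ for $i > r$; beyond that the argument is a direct adaptation of the proof of Theorem~\ref{thm:kob_metric} at the level of paths.
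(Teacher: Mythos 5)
Your argument is correct and follows the same broad strategy as the paper's proof: pass to the product coordinates of Proposition~\ref{prop:chart} near $\xi$, then combine the disk-localization estimate (Observation~\ref{obs:disk_local}) in the factors indexed by $\Ic(\xi)$ with a Euclidean-Lipschitz bound in the remaining factors, finally pulling back through the biholomorphism $\Psi$ and using $K_\Omega \leq K_{\Oc \cap \Omega}$. The one substantive tactical difference is how you control $K$ in the product domain $(\Oc_1 \cap \Delta) \times \dots \times (\Oc_r \cap \Delta) \times \Oc_{r+1} \times \dots \times \Oc_d$. The paper invokes the identity $K_{\Omega_1 \times \dots \times \Omega_k} = \max_i K_{\Omega_i}$, a classical fact whose nontrivial $\leq$ direction is neither proved nor cited there (Section 2 only states the infinitesimal version). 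You instead construct an explicit L-shaped path varying one coordinate at a time, which yields the weaker inequality $K_{\text{product}} \leq \sum_i K_{\Omega_i}$ but relies solely on the infinitesimal product formula that the paper does state, together with Royden's integral description of $K_\Omega$. Since the theorem asks only for an upper bound with some constant $C$, the sum is just as good as the max; your route trades a sharper constant for a more self-contained argument. You also correctly recognize that Observation~\ref{obs:disk_local} as stated concerns boundary points of $\Delta$ only, so that the Lipschitz bound $K_{\Oc_i}(x,y) \leq C_i\abs{x-y}$ for $i > r$ is a separate interior estimate (compactness of $\overline{V_i}$ in $\Oc_i$ plus a uniform radius of contained disks), a point the paper glosses over by folding it into the same ``Observation~\ref{obs:disk_local}'' step.
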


\begin{remark} In general there will not exist a $C \geq 1$ so that the upper bound 
\begin{align*}
K_\Omega(z,w) \leq C\left( \norm{z-w} + \max_{i=1, \dots, N} K_\Delta(f_i(z), f_i(w))\right)
\end{align*}
holds for \textbf{all} $z,w \in \Omega$. The issue is that for some subset $\{i_1, \dots, i_r\} \subset \{1, \dots, N\}$ and $\zeta_1, \dots, \zeta_r \in \partial \Delta$ the set 
\begin{align*}
\left\{ \xi \in \partial \Omega : f_{i_j}(\xi)=\zeta_j \text{ for } 1 \leq j \leq r \text{ and } \abs{f_i(\xi)} < 1 \text{ if } i \notin \{i_1, \dots, i_r\} \right\}
\end{align*}
could have multiple components in $\partial \Omega$. But in this case, using Proposition~\ref{prop:faces} below, one could find $z_n, w_n \in \Omega$ so that 
\begin{align*}
\limsup_{n \rightarrow \infty} K_\Omega(z_n, w_n) = \infty.
\end{align*}
but
\begin{align*}
\limsup_{n \rightarrow \infty} \left( \max_{i=1, \dots, N} K_\Delta(f_i(z_n), f_i(w_n)) \right)< \infty.
\end{align*}
\end{remark}

\begin{proof}
After relabeling the $f_i$, we may assume that $\abs{f_1(\xi)}=\dots = \abs{f_r(\xi)} =1$ and $\abs{f_i(\xi)} < 1$ when $i > r$.  Then
\begin{align*}
 \Ic(\xi) = \{1, \dots, r\}.
\end{align*}
By Proposition~\ref{prop:chart}, there exists a neighborhood $\Oc$ of $\xi$ and holomorphic maps $\Psi:\Oc \rightarrow \Cb^d$ and $\psi:\Oc \rightarrow \Cb^{d-r}$ so that:
\begin{enumerate}
\item $\Psi$ is a biholomorphism onto its image,
\item $\Psi(z) = (f_{1}(z), \dots, f_{r}(z), \psi(z))$ for all $z \in \Oc$, 
\item $\Psi(\Oc) = \Oc_1 \times \dots \times \Oc_d$ for some open sets $\Oc_i \subset \Cb$,
\item $\Psi(\Oc \cap \Omega) = \Psi(\Oc) \cap \Delta^d = (\Oc_1 \cap \Delta) \times \dots \times (\Oc_r \cap \Delta) \times \Oc_{r+1} \times \dots \times \Oc_d$.
\end{enumerate}
Using Observation~\ref{obs:disk_local} we can find a neighborhood $V \Subset \Oc$ of $\xi$ and some $M >0$ so that 
\begin{enumerate}
\item $\Psi(V) = V_1 \times \dots \times V_d$ for some open sets $V_i \subset \Cb$,
\item for $1 \leq i \leq r$ we have 
\begin{align*}
K_{\Oc_i \cap \Delta}(z,w) \leq  e^{M}K_{\Delta}(z,w)
\end{align*}
for $z,w \in V_i \cap \Delta$, and 
\item for $r < i$ we have 
\begin{align*}
K_{\Oc_i}(z,w) \leq e^{M}\norm{z-w}
\end{align*}
for $z,w \in V_i$. 
\end{enumerate}

Now let $\Psi_i$ be the $i^{th}$ coordinate function of $\Psi$. Then for $z,w \in \Oc \cap \Omega$
\begin{align*}
K_\Omega(z,w)&  \leq K_{\Oc \cap \Omega}(z,w) \\
& = \max\left\{  \max_{i=1, \dots, r} K_{\Oc_i \cap \Delta}(\Psi_i(z),  \Psi_i(w)),  \max_{i=r+1, \dots, d} K_{\Oc_i }(\Psi_i(z),  \Psi_i(w)) \right\}
\end{align*}
If $1 \leq i \leq r$, then $\Psi_i(z) = f_i(z)$ and so 
\begin{align*}
K_{\Oc_i \cap \Delta}(\Psi_i(z), \Psi_i(w)) \leq e^{M}K_\Delta(f_i(z),f_i(w))
\end{align*}
for $z,w \in V \cap \Omega$. 
Since $\overline{V} \subset \Oc$ there exists $C_0 \geq 1$ so that 
\begin{align*}
\abs{  \Psi_i(z)-\Psi_i(w) } \leq C_0\norm{z-w}
\end{align*}
for all $z,w \in V$. So for $i > r$ and $z,w \in V$ 
 \begin{align*}
 k_{\Oc_i}(\Psi_i(z), \Psi_i(w))  \leq C_0e^{M}\norm{z-w}.
  \end{align*}
  
  So 
  \begin{align*}
K_\Omega(z,w) \leq C_0e^{M} \left(\norm{z-w}+ \max_{i=1, \dots, N} K_\Delta(f_i(z), f_i(w))  \right)
\end{align*}
for all $z,w \in V \cap \Omega$.
\end{proof} 
  
\section{The asymptotic geometry of generic analytic polyhedrons}

In this section we will prove two facts about the asymptotic geometry of bounded generic analytic polyhedron. 

For the rest of the section suppose that $\Omega \subset \Cb^d$ is a bounded generic analytic polyhedron with generic defining functions $f_1, \dots, f_N:U \rightarrow \Cb$. 

\begin{proposition}\label{prop:faces}
 With the notation above, suppose $x_n$ and $y_n$ are sequences in $\Omega$, $x_n \rightarrow \xi \in \partial \Omega$, $y_n \rightarrow \eta \in \partial\Omega$, and
\begin{align*}
 \limsup_{n \rightarrow \infty} K_\Omega(x_n, y_n) < \infty,
\end{align*}
then $\eta \in \Fc(\xi)$. 
\end{proposition}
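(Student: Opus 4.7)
The plan is to first use the distance-decreasing property applied to each $f_i$ to check that $\eta$ lies in the layer set
\[
\Lc_\xi := \{z \in \partial\Omega : f_i(z) = f_i(\xi) \text{ for } i \in \Ic(\xi), \ |f_i(z)| < 1 \text{ for } i \notin \Ic(\xi)\},
\]
and then to build a continuous path from $\xi$ to $\eta$ inside $\Lc_\xi$, which by definition of $\Fc(\xi)$ forces $\eta \in \Fc(\xi)$.

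First, I would fix $M$ so that $K_\Omega(x_n, y_n) \leq M$ along a subsequence and prove the following auxiliary claim: if $a_n \to \xi$ and $K_\Omega(a_n, b_n) \leq M$, then any limit point $\zeta$ of $b_n$ satisfies $f_i(\zeta) = f_i(\xi)$ for $i \in \Ic(\xi)$ and $|f_i(\zeta)| < 1$ for $i \notin \Ic(\xi)$. This is immediate from Proposition~\ref{prop:kob_dist_lower_bd}: $K_\Delta(f_i(a_n), f_i(b_n)) \leq M$, and for $i \in \Ic(\xi)$ the ball $\{w \in \Delta : K_\Delta(f_i(a_n), w) \leq M\}$ is a Euclidean disk whose center approaches $f_i(\xi) \in \partial\Delta$ and whose Euclidean radius tends to $0$, while for $i \notin \Ic(\xi)$ the ball stays in a compact subset of $\Delta$. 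Applied to $(a_n, b_n) = (x_n, y_n)$ this already gives $\eta \in \Lc_\xi$.

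Next I would construct a path from $\xi$ to $\eta$ in $\Lc_\xi$. For each $n$, pick an absolutely continuous $\sigma_n : [0,1] \to \Omega$ with $\sigma_n(0) = x_n$, $\sigma_n(1) = y_n$ and $\ell_\Omega(\sigma_n) \leq M + 1$, parametrized proportional to arc length, so that $K_\Omega(\sigma_n(s), \sigma_n(t)) \leq (M+1)|s - t|$. Since $\Omega$ is bounded, $\Omega$ sits inside a polydisk of some radius $R$, and combining Proposition~\ref{prop:dist_decrease} with the trivial disk estimate $K_\Delta(z,w) \geq |z-w|/2$ yields a constant $c > 0$ with $K_\Omega(x,y) \geq c\|x - y\|$ for all $x,y \in \Omega$. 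Hence the $\sigma_n$ are Euclidean-Lipschitz with a uniform constant and take values in the compact set $\overline{\Omega}$, so Arzela--Ascoli produces (after a subsequence) a continuous limit $\sigma_\infty : [0,1] \to \overline{\Omega}$ with $\sigma_\infty(0) = \xi$ and $\sigma_\infty(1) = \eta$.

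To finish, for each $t \in [0,1]$ I would apply the auxiliary claim to $(a_n, b_n) = (x_n, \sigma_n(t))$: the bound $K_\Omega(x_n, \sigma_n(t)) \leq M + 1$ together with $\sigma_n(t) \to \sigma_\infty(t)$ forces $\sigma_\infty(t) \in \Lc_\xi$, since $|f_i(\sigma_\infty(t))| = 1$ for $i \in \Ic(\xi)$ automatically puts $\sigma_\infty(t)$ on $\partial\Omega$. Then $\sigma_\infty([0,1])$ is a connected subset of $\Lc_\xi$ containing $\xi$, so it lies inside the connected component $\Fc(\xi)$, giving $\eta = \sigma_\infty(1) \in \Fc(\xi)$. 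The main technical point is the uniform Euclidean equicontinuity of the $\sigma_n$: although the curves do not lie in any fixed $K_\Omega$-bounded set (their endpoints escape to $\partial\Omega$), the bounded-domain lower bound $K_\Omega(x,y) \geq c\|x-y\|$ converts their uniform $K_\Omega$-Lipschitz control into uniform Euclidean control and makes the Arzela--Ascoli step work.
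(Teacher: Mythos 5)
Your proof is correct and follows essentially the same route as the paper's: build connecting curves of uniformly bounded Kobayashi length, extract a uniform limit into $\overline{\Omega}$ via an Arzel\`a--Ascoli argument, use the $K_\Delta$ lower bound on $K_\Omega$ to place the limit curve pointwise in the layer set, and conclude by connectedness. The only differences are cosmetic: the paper takes genuine Kobayashi geodesics (using Proposition~\ref{prop:cauchy_complete} and Hopf--Rinow) and runs Arzel\`a--Ascoli with the $1$-Lipschitz bound into $(B_R, K_{B_R})$ for a large ball $B_R \supset \Omega$, whereas you use approximate geodesics reparametrized proportionally to Kobayashi arc length together with the cruder estimate $K_\Omega(x,y) \geq c\norm{x-y}$ to obtain Euclidean equicontinuity, which slightly streamlines the argument by avoiding the geodesic-existence step.
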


\begin{proof}
After relabeling the $f_i$, we may assume that $\abs{f_1(\xi)}=\dots = \abs{f_r(\xi)} =1$ and $\abs{f_i(\xi)} < 1$ when $i > r$. 

By Proposition~\ref{prop:cauchy_complete} $(\Omega, K_\Omega)$ is a Cauchy complete length space. Thus every two points in $\Omega$ can be joined by a geodesic. Let $\sigma_n: [0,T_n] \rightarrow \Omega$ be a geodesic so that $\sigma_n(0)=x_n$ and $\sigma_n(T_n) = y_n$. Now fix $R > 0$ so that 
\begin{align*}
\Omega \Subset B_R := \{ z \in \Cb^d : \norm{z} < R\}.
\end{align*}
Then since $K_{B_R} \leq K_\Omega$ on $\Omega$ we see that 
\begin{align*}
K_{B_R}(\sigma_n(t), \sigma_n(s)) \leq K_\Omega(\sigma_n(t), \sigma_n(s)) = \abs{t-s}.
\end{align*}
So each $\sigma_n$ is $1$-Lipschitz when viewed as a map from $[0,T_n]$ to $(B_R, K_{B_R})$. Then since $(B_R, K_{B_R})$ is a Cauchy complete metric space, see for instance Corollary 2.3.6 in~\cite{A1989}, we can pass to a subsequence so that $\sigma_n$ converges uniformly to a curve $\sigma: [0,T] \rightarrow \overline{\Omega}$ with $\sigma(0)=\xi$ and $\sigma(T) = \eta$. Since 
 \begin{align*}
 K_\Omega(\sigma_n(t), \sigma_n(0)) = t
 \end{align*}
 using Proposition~\ref{prop:kob_dist_lower_bd} we see that 
 \begin{align*}
f_i(\sigma(t))=  \lim_{n \rightarrow \infty} f_i(\sigma_n(t)) = f_i(\xi) \text{ when } 1 \leq i \leq r
 \end{align*}
 and
  \begin{align*}
\abs{f_i(\sigma(t))} = \lim_{n \rightarrow \infty} \abs{f_i(\sigma_n(t))} <1 \text{ when } r < i.
 \end{align*}
So the image of $\sigma$ is contained in 
\begin{align*}
 \left\{ z \in \partial \Omega : f_i(z) = f_i(\xi) \text{ if } 1 \leq i \leq r \text{ and } \abs{f_i(z)} < 1 \text{ if } i >r \right\}
\end{align*}
 and hence $\eta \in \Fc(\xi)$.
\end{proof}

\begin{proposition} \label{prop:images_of_limits}
With the notation above, suppose $z_0 \in \Omega$, $\varphi_n \in \Aut(\Omega)$, $\varphi_n(z_0) \rightarrow \xi \in \partial \Omega$, and $\varphi_n$ converges locally uniformly to a holomorphic map $\varphi_\infty: \Omega \rightarrow \overline{\Omega}$. Then $\varphi_\infty(\Omega) = \Fc(\xi)$. 
\end{proposition}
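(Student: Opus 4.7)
The plan is to establish the two inclusions $\varphi_\infty(\Omega) \subseteq \Fc(\xi)$ and $\Fc(\xi) \subseteq \varphi_\infty(\Omega)$ separately. The first inclusion follows easily from the $\Aut(\Omega)$-invariance of $K_\Omega$ together with Proposition~\ref{prop:faces}, while the reverse inclusion (the heart of the statement) I would prove via a path-lifting argument inside $\Fc(\xi)$ based on the Kobayashi distance upper bound of Theorem~\ref{thm:kob_dist_up1}.

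For $\varphi_\infty(\Omega) \subseteq \Fc(\xi)$, first I would rule out $\varphi_\infty(w) \in \Omega$ for any $w \in \Omega$. If $\varphi_\infty(w) \in \Omega$, local uniform convergence would force $K_\Omega(\varphi_n(w), z_0)$ to be bounded; the isometry identity $K_\Omega(\varphi_n(w), \varphi_n(z_0)) = K_\Omega(w, z_0)$ and the triangle inequality would then bound $K_\Omega(\varphi_n(z_0), z_0)$, contradicting the fact—extracted from the properness of $(\Omega, K_\Omega)$ implicit in Proposition~\ref{prop:cauchy_complete}—that $\varphi_n(z_0) \to \xi \in \partial \Omega$ implies $K_\Omega(\varphi_n(z_0), z_0) \to \infty$. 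Once $\varphi_\infty(w) \in \partial\Omega$ is known, applying Proposition~\ref{prop:faces} to $x_n := \varphi_n(w) \to \varphi_\infty(w)$ and $y_n := \varphi_n(z_0) \to \xi$ (whose Kobayashi distance $K_\Omega(x_n, y_n) = K_\Omega(w, z_0)$ is finite) yields $\varphi_\infty(w) \in \Fc(\xi)$.

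For $\Fc(\xi) \subseteq \varphi_\infty(\Omega)$, given $\zeta \in \Fc(\xi)$ the strategy is to produce a sequence $\zeta_n \in \Omega$ with $\zeta_n \to \zeta$ and $\sup_n K_\Omega(\zeta_n, \varphi_n(z_0)) < \infty$; then $w_n := \varphi_n^{-1}(\zeta_n)$ satisfies $K_\Omega(w_n, z_0) = K_\Omega(\zeta_n, \varphi_n(z_0))$ and stays in a $K_\Omega$-compact subset of $\Omega$ by properness, a subsequence converges to some $w \in \Omega$, and local uniform convergence forces $\varphi_\infty(w) = \lim \varphi_n(w_n) = \lim \zeta_n = \zeta$. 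To construct such $\zeta_n$, I would use that $\Fc(\xi)$ is a connected, hence path-connected, complex submanifold of $\partial \Omega$ by Proposition~\ref{prop:chart}. Fix a path $\gamma:[0,1] \to \Fc(\xi)$ from $\xi$ to $\zeta$, cover its compact image by finitely many charts $\Oc^{(1)}, \dots, \Oc^{(N)}$ of the type in Proposition~\ref{prop:chart}, and choose a partition $0 = t_0 < \dots < t_N = 1$ with $\gamma([t_{j-1}, t_j]) \subset \Oc^{(j)}$. Setting $\zeta_n^{(0)} := \varphi_n(z_0)$, I would inductively define $\zeta_n^{(j)}$ to be the point of $\Omega \cap \Oc^{(j)}$ whose first $r$ coordinates in the $\Oc^{(j)}$-chart (that is, the $f_i$-values for $i \in \Ic(\xi)$) agree with those of $\zeta_n^{(j-1)}$ and whose remaining $d - r$ transverse coordinates agree with those of $\gamma(t_j)$. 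The two points then differ only in the transverse factor, and Theorem~\ref{thm:kob_dist_up1} yields $K_\Omega(\zeta_n^{(j)}, \zeta_n^{(j-1)}) \leq C_j$ for all large $n$; the triangle inequality gives $K_\Omega(\zeta_n^{(N)}, \varphi_n(z_0)) \leq \sum_j C_j$, while $\zeta_n^{(N)} \to \gamma(t_N) = \zeta$ by construction. The main obstacle is the chart-to-chart bookkeeping in this induction: one needs the transverse coordinates of $\zeta_n^{(j-1)}$, read in the new chart $\Oc^{(j)}$, to lie eventually in a compact subset of the transverse factor so that the constant $C_j$ from Theorem~\ref{thm:kob_dist_up1} is uniform in $n$; this is secured by $\zeta_n^{(j-1)} \to \gamma(t_{j-1}) \in \Oc^{(j)}$.
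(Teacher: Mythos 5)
Your proposal is correct and follows essentially the same approach as the paper: the inclusion $\varphi_\infty(\Omega) \subseteq \Fc(\xi)$ via Proposition~\ref{prop:faces}, and the reverse inclusion via covering a path in $\Fc(\xi)$ by charts from Proposition~\ref{prop:chart}, constructing a chain of points with matching $(f_1,\dots,f_r)$-coordinates, and summing the Kobayashi distance bounds of Theorem~\ref{thm:kob_dist_up1}. Your chain bookkeeping (a single sequence $\zeta_n^{(j)}$ with consecutive terms sharing the first $r$ chart coordinates) is, if anything, slightly cleaner than the paper's pairs $w_j, \overline{w}_j$, and you are right to explicitly note that $\varphi_\infty(\Omega) \subset \partial\Omega$ before invoking Proposition~\ref{prop:faces}, a point the paper leaves implicit.
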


\begin{proof}
Since 
\begin{align*}
\limsup_{n \rightarrow \infty} K_\Omega(\varphi_n(z_0), \varphi_n(z)) = K_\Omega(z_0, z)
\end{align*}
we see from the previous Proposition that $\varphi_\infty(\Omega) \subset \Fc(\xi)$. 

After relabeling the $f_i$, we may assume that $\abs{f_1(\xi)}=\dots = \abs{f_r(\xi)} =1$ and $\abs{f_i(\xi)} < 1$ when $i > r$. 

By Proposition~\ref{prop:chart}, for each $\eta \in \Fc(\xi)$ there exists a neighborhood $\Oc_\eta$ of $\eta$ and holomorphic maps $\Psi_\eta:\Oc_\eta \rightarrow \Cb^d$ and $\psi_\eta:\Oc_\eta \rightarrow \Cb^{d-r}$ so that:
\begin{enumerate}
\item $\Psi_\eta$ is a biholomorphism onto its image,
\item $\Psi_\eta(z) = (f_{1}(z), \dots, f_{r}(z), \psi_\eta(z))$ for all $z \in \Oc_\eta$, 
\item $\Psi(\Oc_\eta) = U_\eta \times W_\eta$ for some open sets $U_\eta \subset \Cb^r$ and $W_\eta \subset \Cb^{d-r}$,
\item $\Psi(\Oc_\eta \cap \Omega) = (U_\eta \cap \Delta^r) \times W_\eta$.
\end{enumerate}
Using Theorem~\ref{thm:kob_dist_up1} we may also assume that there exists some $C_\eta \geq 1$ so that 
\begin{align*}
K_\Omega(z,w) \leq C_\eta \left( 1 + \max_{i=1, \dots, r} K_\Delta(f_i(z), f_i(w)) \right)
\end{align*}
for $z,w \in \Oc_\eta \cap \Omega$. 

Now suppose that $\eta \in \Fc(\xi)$. To show that $\eta \in \varphi_\infty(\Omega)$ we need to find a sequence of points $y_n \in \Omega$ so that $y_n \rightarrow \eta$ and 
\begin{align*}
\liminf_{n \rightarrow \infty} K_\Omega(\varphi_n^{-1}(y_n), z_0) < \infty. 
\end{align*}
To this end, let $\sigma:[0,1] \rightarrow \Fc(\xi)$ be a curve with $\sigma(0)=\xi$ and $\sigma(1) = \eta$. Now we can find $\eta_1, \dots, \eta_m \in \Fc(\xi)$ so that 
\begin{align*}
\sigma([0,1]) \subset \cup_{j=1}^m \Oc_{\eta_j}.
\end{align*}
By relabeling and decreasing the size of our cover, we may assume that $\xi \in \Oc_{\eta_1}$ and for each $1 \leq j \leq m-1$ there exists some $0 \leq t_j \leq 1$ with  
\begin{align*}
\sigma(t_j) \in \Oc_{\eta_j} \cap \Oc_{\eta_{j+1}}
\end{align*}

Next let $u := (f_1, \dots, f_r)(\xi)$ and $u_n := (f_1, \dots, f_r)(\varphi_n(z_0))$. Then for $n$ large we have $u_n \in U_{\eta_j}$ for all $1 \leq j \leq m$.

Let $w_1\in W_{\eta_1}$ be the unique point so that 
\begin{align*}
 (u_n, w_1) = \Phi_{\eta_1}(\varphi_n(z_0))
\end{align*}
and for $2 \leq j \leq m$ let $w_j \in W_{\eta_j}$ be the unique point so that 
\begin{align*}
(u, w_j)=\Phi_{\eta_j}(\sigma(t_{j-1})).
\end{align*}
For $1 \leq j \leq m-1$ let $\overline{w}_j \in W_{\eta_j}$ be the unique point so that 
\begin{align*}
(u, \overline{w}_{j})=\Phi_{\eta_{j}}(\sigma(t_j))
\end{align*}
and let $\overline{w}_m \in W_{\eta_m}$ be the unique point so that 
\begin{align*}
(u, \overline{w}_m)= \Phi_{\eta_m}(\eta).
\end{align*}

Finally let 
\begin{align*}
y_n = \Phi_{\eta_m}^{-1}( u_n, \overline{w}_m) \in \Omega.
\end{align*}
Then by construction 
\begin{align*}
\lim_{n \rightarrow \infty} y_n= \eta
\end{align*}
and 
\begin{align*}
K_\Omega( \varphi_n(z_0), y_n) 
&\leq \sum_{j=1}^m K_{\Omega}\left( \Phi_{\eta_j}^{-1}(u_n, w_j), \Phi_{\eta_j}^{-1}(u_n, \overline{w}_j)\right)\\
& \leq C_{\eta_1} + \dots + C_{\eta_m}.
\end{align*}

So
\begin{align*}
\limsup_{n \rightarrow \infty} K_\Omega(\varphi_n^{-1}(y_n), z_0) \leq C_{\eta_1} + \dots + C_{\eta_m}.
\end{align*}
So by passing to a subsequence we can assume that $\varphi_n^{-1}(y_n)$ converges to some $y \in \Omega$. Then since $\varphi_n$ converges locally uniformly to $\varphi_\infty$ we see that 
\begin{align*}
\varphi_\infty(y) =\lim_{n \rightarrow \infty} \varphi_n(\varphi_n^{-1}(y_n)) = \eta.
\end{align*}
 Since $\eta \in \Fc(\xi)$ was arbitrary we see that $\varphi(\Omega) = \Fc(\xi)$. 
\end{proof}

\section{Proof of Theorem~\ref{thm:main2}} 

For the rest of the section suppose that $\Omega \subset \Cb^d$ is a bounded generic analytic polyhedron with generic defining functions $f_1, \dots, f_N:U \rightarrow \Cb$ and $\Aut(\Omega)$ is non-compact. 

\subsection{An embedding} 

If necessary, we can define holomorphic functions $f_{N+1}, \dots, f_M : U \rightarrow \Cb^d$ so that
\begin{enumerate}
\item for any $N+1 \leq i \leq M$, $f_i(\overline{\Omega}) \subset \Delta$,
\item for any $z, w \in \overline{\Omega}$ distinct there exists $1 \leq i \leq M$ so that $f_i(z)  \neq f_i(w)$,
\item for any point $z \in \overline{\Omega}$ we have
\begin{align*}
\Span_{\Cb} \left\{ \nabla f_1(z), \dots, \nabla f_M(z)\right\} = \Cb^d.
\end{align*}
\end{enumerate}

Now consider the map $F: \Omega \rightarrow \Delta^M$ given by 
\begin{align*}
F(z) = (f_1(z), \dots, f_M(z)).
\end{align*}
By construction this is a holomorphic embedding of $\Omega$ into $\Delta^M$. Moreover, since 
\begin{align*}
k_\Delta(f_i(z); d(f_i)_z(v)) = \frac{\abs{d(f_i)_z(v)}}{1-\abs{f_i(z)}^2} \geq \abs{d(f_i)_z(v)}
\end{align*}
we see that there exists $\epsilon > 0$ so that
\begin{align*}
k_{\Delta^M}(F(z); d(F)_z(v)) = \max_{i=1, \dots, M} k_\Delta(f_i(z); d(f_i)_z(v))  \geq \epsilon \norm{v}
\end{align*}
for all $z \in \overline{\Omega}$ and $v \in \Cb^d$.

Then using Theorem~\ref{thm:kob_metric}  there exists $C_0 \geq 1$ so that
\begin{align*}
k_{\Delta^M}(F(z); d(F)_z(v)) \leq k_\Omega(z;v) \leq C_0 k_{\Delta^M}(F(z); d(F)_z(v)).
\end{align*}
for all $z \in \Omega$ and $v \in \Cb^d$.

\subsection{Fixing our orbit} 

Suppose that $\xi \in \Lc(\Omega)$. Then there exists  $w_0 \in \Omega$ and a sequence $\varphi_n$ in $\Aut(\Omega)$ so that $\varphi_n(w_0) \rightarrow \xi$. Let
\begin{align*}
r = \#\left\{ i : f_i(\xi) = 1\right\}.
\end{align*}
By relabeling the functions $f_1, \dots, f_N$ we may assume that 
\begin{align*}
\{ 1, \dots, r\} = \left\{ i : f_i(\xi) = 1\right\}.
\end{align*}
Using Proposition~\ref{prop:images_of_limits} and possibly passing to a subsequence we can suppose that $\varphi_n$ converges locally uniformly to a holomorphic map $\varphi_\infty: \Omega \rightarrow \Fc(\xi)$. 

\subsection{Constructing affine maps}

Let $w_n = \varphi_n(w_0)$. Then for $1 \leq i \leq r$ let 
\begin{align*}
\lambda_i^{(n)} = \frac{ i}{f_i(w_n)}\frac{\abs{f_i(w_n)}}{\abs{f_i(w_n)}-1}.
\end{align*}
Then define affine maps $A_n \in \Aff(\Cb^r)$ by 
\begin{align*}
A_n(z_1, \dots, z_r) =  \left( \lambda_1^{(n)}\left( z_1 - \frac{f_1(w_n)}{\abs{f_1(w_n)}} \right), \dots, \lambda_r^{(n)}\left( z_r - \frac{f_r(w_n)}{\abs{f_r(w_n)}} \right) \right). 
\end{align*}
Next define affine maps $\overline{A}_n \in \Aff(\Cb^M)$ by
\begin{align*}
\overline{A}_n(z_1, \dots, z_M) = \left( A_n(z_1, \dots, z_r), z_{r+1}, \dots, z_M\right).
\end{align*}
Now 
\begin{align*}
(\overline{A}_nF\varphi_n)(w_0) = (i, \dots, i, f_{r+1}(w_n), \dots, f_M(w_n))
\end{align*}
and so $(\overline{A}_nF\varphi_n)(w_0)$ converges to a point $w_\infty \in \Hc^r \times \Delta^{M-r}$. 

\subsection{Normal families} 

By the discussion in Subsection~\ref{subsec:rescaling_disks} we see that $\overline{A}_n(\Delta^M)$ converges in the local Hausdorff topology to $\Hc^r \times \Delta^{M-r}$.  Moreover
\begin{align*}
k_{\Hc^r \times \Delta^{M-r}}(x;v) = \lim_{n \rightarrow \infty} k_{\overline{A}_n(\Delta^M)}(x;v)
\end{align*}
uniformly on compact sets of $(\Hc^r \times \Delta^{M-r}) \times \Cb^M$ and
\begin{align*}
K_{\Hc^r \times \Delta^{M-r}}(x,y) = \lim_{n \rightarrow \infty} K_{\overline{A}_n(\Delta^M)}(x,y)
\end{align*} 
uniformly on compact sets of $(\Hc^r \times \Delta^{M-r}) \times (\Hc^r \times \Delta^{M-r})$.

Now the map $\Phi_n:=(\overline{A}_n F\varphi_n) : \Omega \rightarrow \Cb^M$ satisfies
\begin{align*}
k_{\overline{A}_n (\Delta^M) }(\Phi_n(z); d(\Phi_n)_z(v)) \leq k_\Omega(z;v) \leq C_0 k_{\overline{A}_n (\Delta^M)}(\Phi_n(z); d(\Phi_n)_z(v))
\end{align*}
for all $z \in \Omega$ and $v \in \Cb^d$. The lower bound implies that $\Phi_n :\Omega \rightarrow \Cb^M$ is a normal family. So after passing to a  subsequence we can suppose that $\Phi_n$ converges locally uniformly to a holomorphic map $\Phi: \Omega \rightarrow \Hc^r \times \Delta^{M-r}$. 

Then  
\begin{align*}
k_{\Hc^r \times \Delta^{M-r}}(\Phi(z); d(\Phi)_z(v)) \leq k_\Omega(z;v) \leq C_0 k_{\Hc^r \times \Delta^{M-r}}(\Phi(z); d(\Phi)_z(v))
\end{align*}
for all $z \in \Omega$ and $v \in \Cb^d$. Notice that the upper bound implies that $\ker d(\Phi)_z = \{0\}$ for every $z \in \Omega$. 

\begin{lemma} $\Phi: \Omega \rightarrow \Hc^r \times \Delta^{M-r}$ is a biholomorphism onto its image. \end{lemma}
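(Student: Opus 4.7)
My plan is to focus on injectivity of $\Phi$: the upper bound displayed just above the lemma forces $\ker d(\Phi)_z = 0$, so $\Phi$ is a holomorphic immersion, and once injectivity is established, the implicit function theorem will give that $\Phi$ is a biholomorphism onto its image (viewed as an immersed complex submanifold of $\Hc^r \times \Delta^{M-r}$). So the entire content of the lemma is the injectivity of $\Phi$.

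Suppose $\Phi(z) = \Phi(w)$ for some $z, w \in \Omega$. The first step is to show that $\varphi_\infty(z) = \varphi_\infty(w)$ by comparing coordinates of $\Phi_n(z)$ and $\Phi_n(w)$. For $i > r$, $(\Phi_n(z))_i - (\Phi_n(w))_i = f_i(\varphi_n(z)) - f_i(\varphi_n(w)) \to 0$. For $1 \leq i \leq r$, $(\Phi_n(z))_i - (\Phi_n(w))_i = \lambda_i^{(n)}\bigl(f_i(\varphi_n(z)) - f_i(\varphi_n(w))\bigr) \to 0$, and since $\abs{\lambda_i^{(n)}} = (1 - \abs{f_i(w_n)})^{-1} \to \infty$, again $f_i(\varphi_n(z)) - f_i(\varphi_n(w)) \to 0$. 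Combined with the local uniform convergence $\varphi_n \to \varphi_\infty$ and the fact that $F$ separates points of $\overline{\Omega}$ by construction, this forces $\varphi_\infty(z) = \varphi_\infty(w) =: \eta$. By Proposition~\ref{prop:images_of_limits}, $\eta \in \Fc(\xi)$, and the characterization of $\Fc(\xi)$ yields $\Ic(\eta) = \Ic(\xi) = \{1, \dots, r\}$.

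The second step is to apply Theorem~\ref{thm:kob_dist_up1} at $\eta$: this provides a neighborhood $V_\eta$ of $\eta$ and a constant $C_\eta \geq 1$ with
\begin{align*}
K_\Omega(z', w') \leq C_\eta \left( \norm{z' - w'} + \max_{i=1, \dots, r} K_\Delta(f_i(z'), f_i(w')) \right)
\end{align*}
for $z', w' \in V_\eta \cap \Omega$. For $n$ large, $\varphi_n(z), \varphi_n(w) \in V_\eta$, and using $\Aut(\Omega)$-invariance of $K_\Omega$,
\begin{align*}
K_\Omega(z, w) \leq C_\eta \left( \norm{\varphi_n(z) - \varphi_n(w)} + \max_{i=1, \dots, r} K_\Delta(f_i(\varphi_n(z)), f_i(\varphi_n(w))) \right).
\end{align*}
The norm term tends to $\norm{\eta - \eta} = 0$. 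For each $i \leq r$, the affine component $A_n^{(i)}: \Delta \to A_n^{(i)}(\Delta)$ is a biholomorphism, so $K_\Delta(f_i(\varphi_n(z)), f_i(\varphi_n(w))) = K_{A_n^{(i)}(\Delta)}((\Phi_n(z))_i, (\Phi_n(w))_i) \to K_{\Hc}((\Phi(z))_i, (\Phi(w))_i) = 0$ by the one-dimensional case of the rescaling convergence in Subsection~\ref{subsec:rescaling_disks}. Letting $n \to \infty$ gives $K_\Omega(z, w) \leq 0$, so $z = w$.

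The main obstacle is bridging the purely local distance estimate of Theorem~\ref{thm:kob_dist_up1} with the global dynamics of $\varphi_n$: the theorem holds only near a specified boundary point, so one must first show that both sequences $\varphi_n(z)$ and $\varphi_n(w)$ eventually enter a common neighborhood of a single boundary point. This is exactly where the identification $\varphi_\infty(z) = \varphi_\infty(w)$ is essential, and it is also precisely why the ``dummy'' defining functions $f_{N+1}, \dots, f_M$ (which ensure $F$ is injective on all of $\overline{\Omega}$) were built into the embedding in the first place.
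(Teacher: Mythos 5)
Your proof is correct and follows essentially the same two-step strategy as the paper: first show $\varphi_\infty(z)=\varphi_\infty(w)$ by exploiting that the rescaling maps $\overline{A}_n$ are expansive (the paper packages your coordinate-by-coordinate estimate into the single inequality $\norm{\overline{A}_n(z)-\overline{A}_n(w)}\geq\norm{z-w}$ together with injectivity of $F$ on $\overline{\Omega}$), and then invoke Theorem~\ref{thm:kob_dist_up1} near $\eta=\varphi_\infty(z)$ combined with $\Aut(\Omega)$-invariance of $K_\Omega$ and the rescaling convergence of the Kobayashi distance to conclude $K_\Omega(z,w)=0$. The only cosmetic difference is that you pass to the limit component-by-component while the paper works directly with $K_{\Delta^M}$ and $K_{\overline{A}_n(\Delta^M)}$.
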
 

\begin{proof} Since  $\ker d(\Phi)_z = \{0\}$ for every $z \in \Omega$, the map $\Phi$ is a local biholomorphism onto its image. To show that it is a global biholomorphism we need to show that $\Phi$ is one-to-one. So suppose that $\Phi(z_1) = \Phi(z_2)$. Then 
\begin{align*}
0 = \lim_{n \rightarrow \infty} \overline{A}_n \Big( F(\varphi_n(z_1)) \Big) - \overline{A}_n \Big(  F(\varphi_n(z_2)) \Big).
\end{align*}
Since $\norm{\overline{A}_n(z)-\overline{A}_n(w)} \geq \norm{z-w}$ for all $n$  this implies that 
\begin{align*}
 F(\varphi_\infty(z_1) )= \lim_{n \rightarrow \infty} F(\varphi_n(z_1)) = \lim_{n \rightarrow \infty} F(\varphi_n(z_2))  = F(\varphi_\infty(z_2)).
\end{align*}
Since $F$ is one-to-one on $\overline{\Omega}$ this implies that $\varphi_\infty(z_1) = \varphi_\infty(z_2)$. 

Now by Theorem~\ref{thm:kob_dist_up1} there exists $C_1 \geq 1$ so that for large $n$ we have
\begin{align*}
K_\Omega(\varphi_n(z_1), \varphi_n(z_2)) \leq C_1 \Big( \norm{\varphi_n(z_1) - \varphi_n(z_2)} + K_{\Delta^M}(F(\varphi_n (z_1)), F(\varphi_n (z_2))) \Big).
\end{align*}
Since $K_\Omega(z_1, z_2) = K_\Omega(\varphi_n(z_1), \varphi_n(z_2))$ for all $n \in \Nb$ we then see that
\begin{align*}
K_\Omega(z_1, z_2) 
&\leq \lim_{n \rightarrow \infty} C_1 \Big( \norm{\varphi_n(z_1) - \varphi_n(z_2)} + K_{\Delta^M}(F(\varphi_n (z_1)), F(\varphi_n (z_2))) \Big) \\
& = \lim_{n \rightarrow \infty} C_1 K_{\Delta^M}(F(\varphi_n (z_1)), F(\varphi_n (z_2))) \\
& =  \lim_{n \rightarrow \infty} C_1 K_{\overline{A}_n(\Delta^M)}(\Phi_n(z_1), \Phi_n(z_2)) = C_1 K_{\Hc^r \times \Delta^{M-r}}(\Phi(z_1), \Phi(z_2)) = 0.
\end{align*}
Thus $z_1 = z_2$ and so $\Phi$ is one-to-one. 
\end{proof}

\subsection{Analyzing $\Phi(\Omega)$}

Let $\pi_2: \Cb^r \times \Cb^{M-r} \rightarrow \Cb^{M-r}$ be the natural projection and  let $W = \pi_2(\Phi(\Omega))$. 

\begin{lemma} $\Phi(\Omega) = \Hc^r \times W$. \end{lemma}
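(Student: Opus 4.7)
The inclusion $\Phi(\Omega) \subseteq \Hc^r \times W$ is immediate from the definition $W = \pi_2(\Phi(\Omega))$, so the real content is to produce, for each given $(u, w) \in \Hc^r \times W$, a point $y \in \Omega$ with $\Phi(y) = (u, w)$. My plan is as follows: pick $z_* \in \Omega$ with $\Phi(z_*) = (u_*, w)$ for some $u_* \in \Hc^r$ (available since $w \in W$), and then ``slide'' the first $r$ coordinates of $\Phi(z_*)$ from $u_*$ to $u$ using the rescaling $A_n$, leaving the last $M-r$ coordinates essentially unchanged; the point $y$ will be extracted as a subsequential limit of a sequence $y_n \in \Omega$. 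The geometric input will be the product chart from Proposition~\ref{prop:chart} near $\eta := \varphi_\infty(z_*) \in \Fc(\xi)$, which identifies $\Omega$ locally with a product in which the first $r$ coordinates are exactly $(f_1, \dots, f_r)$, so that sliding is possible.

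Concretely, I would first apply Observation~\ref{obs:blow_up} to obtain $u_n \in \Delta^r$ with $A_n(u_n) \to u$ and $\norm{u_n - (f_1, \dots, f_r)(w_n)} \to 0$; since $f_i(w_n) \to f_i(\xi) = f_i(\eta)$ for $1 \leq i \leq r$, this forces $u_n \to (f_1(\eta), \dots, f_r(\eta))$. Noting that $\Ic(\eta) = \Ic(\xi) = \{1,\dots,r\}$, apply Proposition~\ref{prop:chart} at $\eta$ to obtain a chart $\Psi(z) = ((f_1, \dots, f_r)(z), \psi(z))$ on a neighborhood $\Oc$ of $\eta$. For $n$ large, $\varphi_n(z_*) \in \Oc$ and the pair $(u_n, \psi(\varphi_n z_*))$ lies in $\Psi(\Oc \cap \Omega)$, so I set
\[
y_n' := \Psi^{-1}\bigl(u_n, \psi(\varphi_n(z_*))\bigr) \in \Omega, \qquad y_n := \varphi_n^{-1}(y_n').
\]
A direct unpacking then yields $\Phi_n(y_n) = (A_n u_n, (f_{r+1}, \dots, f_M)(y_n'))$, and since $y_n' \to \eta$ while $(f_{r+1}, \dots, f_M)(\varphi_n z_*) \to w$, this converges to $(u, w)$.

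The main obstacle will be showing that $\{y_n\}$ has a subsequential limit inside $\Omega$ rather than escaping to $\partial \Omega$; a priori only the rescaled image $\Phi_n(y_n)$ is under control. I plan to handle this by bounding the Kobayashi distance $K_\Omega(y_n, z_*) = K_\Omega(y_n', \varphi_n z_*)$ (using that $\varphi_n$ is an isometry) and then invoking Proposition~\ref{prop:cauchy_complete}. Theorem~\ref{thm:kob_dist_up1} at $\eta$ gives, for $n$ large,
\[
K_\Omega(y_n', \varphi_n z_*) \leq C\Bigl( \norm{y_n' - \varphi_n z_*} + \max_{1 \leq i \leq r} K_\Delta\bigl(f_i(y_n'), f_i(\varphi_n z_*)\bigr) \Bigr);
\]
the Euclidean term vanishes since both points tend to $\eta$. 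For each Kobayashi term, the biholomorphism $A_n^{(i)} \colon \Delta \to A_n^{(i)}(\Delta)$ rewrites it as $K_{A_n^{(i)}(\Delta)}\bigl(A_n^{(i)} u_n^{(i)}, A_n^{(i)} f_i(\varphi_n z_*)\bigr)$, which converges to $K_\Hc(u^{(i)}, u_*^{(i)}) < \infty$ by the local uniform convergence $K_{A_n^{(i)}(\Delta)} \to K_\Hc$ established in Section~\ref{subsec:rescaling_disks}. Thus $\{y_n\}$ is bounded in $(\Omega, K_\Omega)$, so a subsequence converges to some $y \in \Omega$, and local uniform convergence $\Phi_n \to \Phi$ together with $y_n \to y$ yields $\Phi(y) = \lim_n \Phi_n(y_n) = (u, w)$, completing the argument.
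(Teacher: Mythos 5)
Your proposal is correct and follows essentially the same strategy as the paper: pick a preimage $z_*$ of $(u_*, w)$, pass to $z_n = \varphi_n(z_*)$ converging to $\eta \in \Fc(\xi)$, slide the first $r$ coordinates to $u_n$ inside the product chart $\Psi$ of Proposition~\ref{prop:chart}, bound $K_\Omega(y_n', z_n)$ via Theorem~\ref{thm:kob_dist_up1}, and extract a limit using completeness. The one place you diverge slightly is in bounding $\max_i K_\Delta(f_i(y_n'), f_i(z_n))$: the paper triangulates through $(f_1,\dots,f_r)(w_n)$, controlling one leg by the isometry invariance $K_\Omega(z_n, w_n) = K_\Omega(z_0, w_0)$ and the other by Observation~\ref{obs:blow_up}, whereas you instead push both arguments forward under $A_n^{(i)}$ and invoke the local uniform convergence $K_{A_n^{(i)}(\Delta)} \to K_\Hc$ directly. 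Both routes are valid and of comparable length; yours makes the rescaling convergence do the work explicitly, while the paper's avoids having to check that $A_n^{(i)} f_i(z_n)$ converges in $\Hc$ by reducing to the already-established Observation~\ref{obs:blow_up}. No gaps.
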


\begin{proof}
Since $\Phi_n$ converges locally uniformly to $\Phi$, to show that some $y \in \Cb^M$ is contained in $\Phi(\Omega)$ it is enough to find a sequence $y_n \in \Omega$ so that:
\begin{enumerate}
\item $\liminf_{n \rightarrow \infty} K_\Omega(y_n, w_0) < \infty$,
\item $\lim_{n \rightarrow \infty} \Phi_n(y_n) = y$.
\end{enumerate}

Suppose that $w \in W$. Then there exists $z_0 \in \Omega$ and $u \in \Hc^r$ so that $\Phi(z_0) = (u,w)$. Consider the sequence $z_n = \varphi_n(z_0)$. Next let 
\begin{align*}
\eta:= \varphi_\infty(z_0) = \lim_{n \rightarrow \infty} \varphi_n (z_0) = \lim_{n \rightarrow \infty} z_n. 
\end{align*}
Then 
\begin{align*}
K_\Omega(w_n, z_n) = K_\Omega(\varphi_n (w_0), \varphi_n (z_0)) = K_\Omega(w_0, z_0)
\end{align*}
so $\eta \in \Fc(\xi)$ by Proposition~\ref{prop:faces}. In particular, 
\begin{align*}
\lim_{n \rightarrow \infty} f_i(w_n) =f_i(\xi) = f_i(\eta) = \lim_{n \rightarrow \infty} f_i(z_n) 
\end{align*}
for $1 \leq i \leq r$ and 
\begin{align*}
\abs{f_i(\eta)} = \lim_{n \rightarrow \infty}  \abs{f_i(z_n)} < 1 
\end{align*}
for $r < i$.

Now by Proposition~\ref{prop:chart}, there exists a neighborhood $\Oc$ of $\eta$ and holomorphic maps $\Psi:\Oc \rightarrow \Cb^d$ and $\psi:\Oc \rightarrow \Cb^{d-r}$ so that:
\begin{enumerate}
\item $\Psi$ is a biholomorphism onto its image,
\item $\Psi(z) = (f_{1}(z), \dots, f_{r}(z), \psi(z))$ for all $z \in \Oc$, 
\item $\Psi(\Oc) = V \times W$ for some open sets $V\subset \Cb^r$ and $W \subset \Cb^{d-r}$,
\item $\Psi(\Oc \cap \Omega) = \Psi(\Oc) \cap \Delta^d = (V \cap \Delta^r) \times W$.
\end{enumerate}
Using Theorem~\ref{thm:kob_dist_up1} we may also assume that there exists some $C_2 \geq 1$ so that 
\begin{align*}
K_\Omega(z,w) \leq C_2 \left( 1 + \max_{i=1, \dots, r} K_\Delta(f_i(z), f_i(w)) \right)
\end{align*}
for $z,w \in \Oc \cap \Omega$. 

Now suppose that $u^\prime \in \Hc^r$. Then by Observation~\ref{obs:blow_up} there exists $u_n^\prime \in \Delta^r$ so that 
\begin{enumerate}
\item $\lim_{n \rightarrow \infty} A_nu_n^\prime = u^\prime$,
\item $\limsup_{n \rightarrow \infty} K_{\Delta^r}\left(u_n^\prime, (f_1, \dots, f_r)(w_n) \right) < \infty$.
\end{enumerate}
Then 
\begin{align*}
\lim_{n \rightarrow \infty} u_n^\prime = \lim_{n \rightarrow \infty} (f_1, \dots, f_r)(w_n)= \lim_{n \rightarrow \infty} (f_1, \dots, f_r)(z_n)
\end{align*}
So for $n$ large, $u_n^\prime \in V$. Then let 
\begin{align*}
z_n^\prime = \Psi^{-1}( u_n^\prime, \psi(z_n)).
\end{align*}

Then 
\begin{align*}
K_\Omega(z_n, z_n^\prime)
& \leq C_2\left( 1+  K_{\Delta^r}\left( (f_1, \dots, f_r)(z_n), u_n^\prime \right) \right) \\
& \leq C_2\left(1+ K_{\Delta^r}\left( (f_1, \dots, f_r)(z_n), (f_1, \dots, f_r)(w_n) \right)+K_{\Delta^r}\left((f_1, \dots, f_r)(w_n), u_n^\prime \right)\right)
\end{align*}
and so 
\begin{align*}
\limsup_{n \rightarrow \infty} K_\Omega(z_0, \varphi_n^{-1} (z_n^\prime)) = \limsup_{n \rightarrow \infty} K_\Omega(z_n, z_n^\prime) < \infty.
\end{align*}
So after passing to a subsequence we can suppose $\varphi_n^{-1} (z_n^\prime) \rightarrow y \in \Omega$. Then since $\Phi_n$ converges locally uniformly to $\Phi$ we have
\begin{align*}
\Phi(y) &= \lim_{n \rightarrow \infty} \Phi_n(\varphi_n^{-1} (z_n^\prime))=\lim_{n \rightarrow \infty} (\overline{A}_n F)(z_n^\prime) \\
&= \lim_{n \rightarrow \infty} \left( \overline{A}_n(u_n^\prime), f_{r+1}(z_n^\prime), \dots, f_M(z_n^\prime)\right) = (u^\prime,w).
\end{align*}
Notice that $w = \lim_{n \rightarrow \infty} (f_{r+1}, \dots, f_M)(z_n^\prime)$ since $\lim_{n \rightarrow \infty} \norm{z_n - z_n^\prime} = 0$.

Since $u^\prime \in \Hc^r$ was arbitrary we see that $\Hc^r \times \{w \} \subset \Phi(\Omega)$. Then since $w \in W$ was arbitrary we see that $\Hc^r \times W = \Phi(\Omega)$. 

\end{proof}

\begin{lemma} $W$ is biholomorphic to $\Fc(\xi)$. \end{lemma}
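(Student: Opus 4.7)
The plan is to produce an explicit biholomorphism $g\colon \Fc(\xi) \to W$, where $g := (f_{r+1}, \dots, f_M)$. The key observation is that the affine rescalings $\overline{A}_n$ act as the identity on the last $M-r$ coordinates, so for any $z \in \Omega$,
\begin{align*}
\pi_2(\Phi_n(z)) = (f_{r+1}(\varphi_n(z)), \dots, f_M(\varphi_n(z))).
\end{align*}
Passing to the limit gives $\pi_2 \circ \Phi = g \circ \varphi_\infty$. Combined with Proposition~\ref{prop:images_of_limits}, which gives $\varphi_\infty(\Omega) = \Fc(\xi)$, this identifies $W = g(\Fc(\xi))$, so surjectivity of $g\vert_{\Fc(\xi)}\colon \Fc(\xi) \to W$ is built in.

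Next I would verify that $g\vert_{\Fc(\xi)}$ is injective. If $\eta_1, \eta_2 \in \Fc(\xi)$ and $g(\eta_1) = g(\eta_2)$, then combining this with the identity $f_i(\eta_j) = f_i(\xi)$ for $1 \leq i \leq r$ (built into the definition of $\Fc(\xi)$) gives $F(\eta_1) = F(\eta_2)$, whence $\eta_1 = \eta_2$ since $F$ is injective on $\overline{\Omega}$. For the immersion property at a point $\eta \in \Fc(\xi)$, Proposition~\ref{prop:chart} realizes $\Fc(\xi)$ locally as a $(d-r)$-dimensional complex submanifold of $\Cb^d$ with $T_\eta \Fc(\xi) = \bigcap_{i=1}^r \ker d(f_i)_\eta$. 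Any vector $v$ in this tangent space that is killed by $d(g)_\eta$ satisfies $\ip{v,\nabla f_i(\eta)} = 0$ for all $1 \leq i \leq M$, and hence $v = 0$ because these gradients span $\Cb^d$ by the construction of $F$.

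Finally, since $\Phi$ is a biholomorphism onto the $d$-dimensional complex submanifold $\Hc^r \times W \subset \Cb^M$, $W$ inherits the structure of a $(d-r)$-dimensional complex submanifold of $\Cb^{M-r}$. The map $g\vert_{\Fc(\xi)}\colon \Fc(\xi) \to W$ is therefore a surjective injective holomorphic immersion between equidimensional complex manifolds, and so a biholomorphism by the inverse function theorem. The main obstacle is really just bookkeeping: one has to cleanly separate the action of the rescaling on the first $r$ coordinates from the passive behavior on the last $M-r$ coordinates, and relate the locally uniform limit $\varphi_\infty$ to $\Phi$. Once this identification is set up, the injectivity of $F$ on $\overline{\Omega}$ and the spanning property of its gradients do all the real work.
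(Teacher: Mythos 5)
Your proof is correct and takes essentially the same approach as the paper: both define the map $(f_{r+1},\dots,f_M)\colon \Fc(\xi)\to\Cb^{M-r}$, prove it is an injective immersion using the spanning property of the gradients and the injectivity of $F$ on $\overline{\Omega}$, and identify its image with $W$ via $\pi_2\circ\Phi = G\circ\varphi_\infty$ together with Proposition~\ref{prop:images_of_limits}. The only difference is cosmetic ordering of the steps.
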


\begin{proof} Let $G: \Fc(\xi) \rightarrow \Delta^{M-r}$ be given by $G(z) = (f_{r+1}, \dots, f_M)(z)$. Since
\begin{align*}
\Span_{\Cb} \left\{ \nabla f_1(z), \dots, \nabla f_M(z)\right\} = \Cb^d
\end{align*}
for any point $z \in \overline{\Omega}$ and the tangent space of $\Fc(\xi)$ at $z$ is the orthogonal complement of 
\begin{align*}
\Span_{\Cb} \left\{ \nabla f_1(z), \dots, \nabla f_r(z)\right\}
\end{align*}
we see that $\ker d(G)_z = \{0\}$. Thus $G$ is a local biholomorphism. But then using the fact that $F$ is one-to-one on $\overline{\Omega}$ we see that $G$ is one-to-one on $\Fc(\xi)$. Hence $G$ is a biholomorphism onto its image. 

We claim that $G(\Fc(\xi)) = W$.  Let $\pi_2 : \Cb^r \times \Cb^{M-r} \rightarrow \Cb^{M-r}$ be the projection onto the second factor. Then 
 \begin{align*}
\pi_2(\Phi(z))
& = \lim_{n \rightarrow \infty} \pi_2((\overline{A}_nF\varphi_n)(z)) = \lim_{n \rightarrow \infty} (\pi_2F)(\varphi_n(z))\\
&=  \lim_{n \rightarrow \infty} G(\varphi_n(z)) = G(\varphi_\infty(z)).
\end{align*}
So $W =   \pi_2( \Phi(\Omega)) = G(\varphi_\infty(\Omega)) = G(\Fc(\xi))$ by Proposition~\ref{prop:images_of_limits}.
\end{proof}

\section{Proof of Proposition~\ref{prop:main3}}

In this section we prove Proposition~\ref{prop:main3}. The key step is proving the following refinement of Theorem~\ref{thm:main2}:

\begin{theorem}\label{thm:main2_refined}
Suppose $\Omega \subset \Cb^d$ is a bounded generic analytic polyhedron. If $\xi \in \Lc(\Omega)$, then there exists a biholomorphism  $\Phi:\Omega \rightarrow \Delta^{r(\xi)} \times \Fc(\xi)$ so that: if $\theta \in \Aut(\Fc(\xi))$ and $\wh{\theta} = \Phi^{-1} (\id, \theta) \Phi$ then 
\begin{align*}
f_i\left(\wh{\theta}(z)\right) = f_i(z)
\end{align*}
for all $i \in \Ic(\xi)$ and $z \in \Omega$.
\end{theorem}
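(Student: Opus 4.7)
The plan is to construct a biholomorphism $\Phi: \Omega \to \Delta^{r} \times \Fc(\xi)$ (with $r=r(\xi)$) satisfying the stronger property $\pi_1 \circ \Phi = F_I$, where $F_I := (f_i)_{i \in \Ic(\xi)}: \Omega \to \Delta^r$ is the tuple of defining functions indexed by $\Ic(\xi)$. From such a $\Phi$, the refinement follows immediately: for any $\theta \in \Aut(\Fc(\xi))$ and $\wh{\theta} := \Phi^{-1}(\id, \theta)\Phi$, one computes
\[
F_I(\wh{\theta}(z)) = \pi_1(\Phi(\wh{\theta}(z))) = \pi_1((\id, \theta)\Phi(z)) = \pi_1(\Phi(z)) = F_I(z),
\]
so $f_i(\wh{\theta}(z)) = f_i(z)$ for every $i \in \Ic(\xi)$ and $z \in \Omega$.

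To construct $\Phi$, I would combine Theorem~\ref{thm:main2} (which provides an abstract biholomorphism $\Phi_0: \Omega \to \Delta^r \times \Fc(\xi)$) with the classical theory of characteristic decompositions of analytic polyhedra, as suggested by the outline. By Proposition~\ref{prop:chart}, $F_I$ is a holomorphic submersion on a neighborhood of $\Fc(\xi)$, where it is the first component of a local product chart $(F_I, \psi)$. Combining these local product structures along $\Fc(\xi)$ with the global biholomorphism $\Phi_0$ and the associated map $\varphi_\infty:\Omega\to\Fc(\xi)$ built in the proof of Theorem~\ref{thm:main2}, one aims to establish that $F_I: \Omega \to \Delta^r$ is globally a holomorphic submersion with connected fibers each biholomorphic to $\Fc(\xi)$, that $F_I$ is surjective onto $\Delta^r$, and that this fibration is holomorphically trivial. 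A choice of trivialization yields a holomorphic retraction $\tau: \Omega \to \Fc(\xi)$ so that $\Phi := (F_I, \tau)$ is the desired biholomorphism.

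The main obstacle is the global analysis of $F_I$. While the local product structure near $\Fc(\xi)$ is given directly by Proposition~\ref{prop:chart}, extending this to a globally trivial $\Fc(\xi)$-bundle requires the characteristic decomposition theory together with Theorem~\ref{thm:main2}: the local charts provide the fiber structure along $\Fc(\xi)$, and the existence of a global product structure forces triviality and gives the identification of the typical fiber with $\Fc(\xi)$. A subtlety worth flagging is that when $\Fc(\xi)$ itself contains polydisc factors, the group $\Aut(\Delta^r \times \Fc(\xi))$ can permute or mix the initial $\Delta^r$ factor with internal polydisc factors of $\Fc(\xi)$, and the biholomorphism $\Phi_0$ from Theorem~\ref{thm:main2} need not respect the splitting induced by $F_I$; the characteristic decomposition provides the canonical regrouping that singles out the $r$ polydisc directions corresponding to $F_I$.
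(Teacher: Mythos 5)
Your proposal correctly identifies that characteristic decompositions are the key new ingredient, and the reduction at the start (if $\pi_1\circ\Phi = F_I$ then the equivariance property is automatic) is fine. However, the core of your argument is a \emph{plan} rather than a proof, and it is structured differently from the paper in a way that introduces serious gaps.

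The first gap is the central technical step, which you have not identified. You write vaguely that ``the characteristic decomposition provides the canonical regrouping,'' but you never say how. The paper's mechanism is Corollary~\ref{cor:finite_index}: there is a finite-index normal subgroup $H \leq \Aut(\Omega)$ for which $\varphi(L(z,f_i)) = L(\varphi(z),f_i)$ holds \emph{without} permuting the indices. The paper then shows that the orbit accumulating at $\xi$ can be realized by a sequence $\varphi_n \in H$, and re-runs the rescaling of Theorem~\ref{thm:main2} with this particular sequence. Only then does the limit $\Phi^{(i)} = \lim_n \ell_n^{(i)} \circ f_i \circ \varphi_n$ inherit the level-set relation $L(z,f_i) \subset L(z,\Phi^{(i)})$, upgraded to equality via Lemma~\ref{lem:wedge}. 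Without restricting to $H$, the relation $\varphi_n(L(z,f_i)) = L(\varphi_n(z),f_{\sigma_n(i)})$ involves a permutation $\sigma_n$ that can vary with $n$, and the argument collapses. Your plan, which starts from the abstract $\Phi_0$ of Theorem~\ref{thm:main2} and tries to adjust it a posteriori, has no analogue of this step.

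The second gap is that you assert, without argument, that $F_I : \Omega \to \Delta^r$ is a surjective submersion defining a \emph{globally holomorphically trivial} $\Fc(\xi)$-bundle; this is exactly the hard content, not a formality. The paper deliberately avoids this global claim: it establishes only the weaker statement $L(z,f_i) = L(z,\Phi^{(i)})$, i.e.\ that $f_i$ and the $i$th coordinate of $\Phi$ share the same level-set decomposition, not that they coincide (nor that $\Phi^{(i)}$ is a M\"obius transform of $f_i$). This weaker statement already yields the equivariance conclusion, since $\wh\theta$ preserves the fibers of $\Phi^{(i)}$ by construction and hence the sets $L(z,f_i)$, whence $f_i\circ\wh\theta = f_i$. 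In short, the paper proves a genuinely weaker intermediate claim that suffices, while you are reaching for a stronger claim ($\pi_1\circ\Phi = F_I$ and global bundle triviality) that would require substantial additional justification — in particular surjectivity of $F_I$ onto $\Delta^r$ and triviality of the fibration — none of which is supplied.
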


Delaying the proof of Theorem~\ref{thm:main2_refined} let us use it to prove Proposition~\ref{prop:main3}:

\begin{proof}[Proof of Proposition~\ref{prop:main3}]
Suppose that $\xi_0 \in \Lc(\Omega)$ and 
\begin{align*}
r(\xi_0) =  \max \{ r(\xi) : \xi \in \Lc(\Omega) \}.
\end{align*}
Let $r_0 := r(\xi_0)$. By relabeling our defining functions we can assume that 
\begin{align*}
\abs{f_1(\xi_0)} = \dots =\abs{f_{r_0}(\xi_0)} = 1
\end{align*}
and $\abs{f_i(\xi_0)} < 1$ when $i > r_0$. 

By Theorem~\ref{thm:main2_refined} there exists a biholomorphism  $\Phi:\Omega \rightarrow \Delta^{r_0} \times \Fc(\xi_0)$ so that: if $\theta \in \Aut(\Fc(\xi_0))$ and $\wh{\theta} = \Phi^{-1} (\id, \theta) \Phi$ then 
\begin{align*}
f_i\left(\wh{\theta}(z)\right) = f_i(z)
\end{align*}
for all $1 \leq i \leq r_0$ and $z \in \Omega$.

Now suppose for a contradiction that $\Aut(\Fc(\xi_0))$ is non-compact. Then there exists $\theta_n \in \Aut(\Fc(\xi_0))$ so that $\theta_n \rightarrow \infty$. Let $\wh{\theta}_n = \Phi^{-1} (\id, \theta_n) \Phi$, then $\wh{\theta}_n \rightarrow \infty$ in $\Aut(\Omega)$. 

Now fix some sequence $\varphi_n \in \Aut(\Omega)$ and some $z_0 \in \Omega$ so that $\varphi_n(z_0) \rightarrow \xi_0$. Since $\wh{\theta}_n \rightarrow \infty$ in $\Aut(\Omega)$ there exists some $1 \leq i_0 \leq N$ so that 
\begin{align*}
\limsup_{n \rightarrow \infty} \abs{f_{i_0}\left(\wh{\theta}_n (z_0)\right)} = 1.
\end{align*}
Since $f_i\left(\wh{\theta}_n (z_0)\right) = f_i(z_0)$ for $1 \leq i \leq r_0$ we see that $i_0 > r_0$. 

Now for each $k \in \Nb$ pick $n_k$ so that 
\begin{align*}
\min_{i=1, \dots, r_0} \abs{f_i(\varphi_{n_k} (z_0))} > 1-1/k 
\end{align*}
and then pick $m_k$ so that 
\begin{align*}
\abs{f_{i_0}\left(\wh{\theta}_{m_k} \varphi_{n_k} (z_0)\right)} > 1-1/k.
\end{align*}
This second choice is possible because of Proposition~\ref{prop:faces}.

Notice that 
\begin{align*}
\min_{i=1, \dots, r_0} \abs{f_i\left(\wh{\theta}_{m_k}\varphi_{n_k} (z_0)\right)} = \min_{i=1, \dots, r_0} \abs{f_i(\varphi_{n_k} (z_0))} > 1-1/k.
\end{align*}
Then by passing to a subsequence we can suppose that $\wh{\theta}_{m_k}\varphi_{n_k} (z_0) \rightarrow \eta \in \partial \Omega$. Then $\eta \in \Lc(\Omega)$ and 
\begin{align*}
\Ic(\eta) \supset \{1,\dots, r_0, i_0\}.
\end{align*}
So 
\begin{align*}
r(\eta) > r_0 = \max \{ r(\xi) : \xi \in \Lc(\Omega) \}.
\end{align*} 
and we have a contradiction. 
\end{proof}

\subsection{Characteristic decompositions} Before starting the proof of Theorem~\ref{thm:main2_refined} we will need to recall some classical facts about the characteristic decompositions of an analytic polyhedron.

Suppose that $\Omega \subset \Cb^d$ is a domain and $f:\Omega \rightarrow \Cb$ is holomorphic. Then for $z \in \Omega$ let $L(z,f)$ be the connected component of $f^{-1}(f(z))$ which contains $z$. In the case in which $\Omega$ is an analytic polyhedron with defining functions $f_1, \dots, f_N : U \rightarrow \Delta$ the decomposition of $\Omega$ into sets of the form $L(z,f_i)$ is a called a \emph{characteristic decomposition} of $\Omega$. 

The following theorem is classical:

\begin{theorem}\cite[Satz 14]{RS1960}\label{thm:char_decomp}
Suppose that $\Omega$ is an analytic polyhedron and $f_1, \dots, f_N : U \rightarrow \Omega$ is a minimal set of defining  functions (that is, any proper subset of $f_1, \dots, f_n$ is not a set of defining functions for $\Omega$). If $\varphi \in \Aut(\Omega)$, then there exists a map $\sigma: \{1, \dots, n\} \rightarrow \{1, \dots, n\}$ so that 
\begin{align*}
\varphi(L(z,f_i)) = L\left(\varphi(z), f_{\sigma(i)}\right)
\end{align*}
for all $z \in \Omega$ and $1 \leq i \leq N$. 
\end{theorem}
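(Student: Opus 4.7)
The plan is to deduce the theorem from the intrinsic nature of the characteristic decomposition: I will show that the family of foliations $\Lc_i := \{L(z,f_i) : z \in \Omega\}$ depends only on $\Omega$ and not on the choice of minimal defining functions, and then apply this invariance to the pulled-back functions $f_i \circ \varphi$. A first preparatory step is to establish a face-function correspondence for minimal defining sets: each $f_i$ has a non-empty \emph{exclusive face}
\begin{align*}
F_i = \{\xi \in \partial \Omega : \abs{f_i(\xi)} = 1,\ \abs{f_j(\xi)} < 1 \text{ for all } j \neq i\}.
\end{align*}
If $F_i$ were empty, then $\{\abs{f_i} = 1\} \cap \partial \Omega$ would be contained in $\bigcup_{j \neq i} \{\abs{f_j} = 1\}$, and a maximum-principle argument would then allow me to remove $f_i$ from the defining set, contradicting minimality.

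The main step is to characterize each foliation $\Lc_i$ intrinsically: I want to show that a relatively closed connected complex analytic hypersurface $L \subset \Omega$ is a leaf of $\Lc_i$ if and only if $L$ is maximal (as such a hypersurface) and its cluster set in $\partial \Omega$ meets $F_i$. Such a description of $\Lc_i$ invokes only $\Omega$ together with the combinatorial datum ``which face the leaves approach'', and so is insensitive to the specific choice of the function $f_i$. With this in hand, given $\varphi \in \Aut(\Omega)$, consider $\wt{f}_i := f_i \circ \varphi$. Even though $\wt{f}_i$ is only guaranteed to be defined on $\Omega$, the associated foliation $\{L(z, \wt{f}_i) : z \in \Omega\} = \varphi^{-1}(\Lc_i)$ is again a holomorphic foliation of $\Omega$ by level sets of a bounded holomorphic function, and its cluster set near $\partial \Omega$ equals $\varphi^{-1}(F_i)$ in the cluster-set sense. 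Since $\varphi^{-1}$ must permute the combinatorial strata of $\partial \Omega$ determined by which $\abs{f_j}$ attain the value $1$, this transported foliation must coincide with $\Lc_{\sigma^{-1}(i)}$ for some unique $\sigma(i) \in \{1,\dots,N\}$. Unwinding the definitions, $\varphi(L(z, f_{\sigma^{-1}(i)})) = L(\varphi(z), f_i)$, which after relabeling is the statement of the theorem.

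The main obstacle is the intrinsic characterization of $\Lc_i$. An automorphism of a general bounded domain need not extend continuously to the boundary, so the invariance argument cannot rely on boundary values and must be carried out with cluster sets of holomorphic maps and subvarieties. A further subtlety is that a leaf $L(z, f_i)$ will typically have cluster points lying in lower-dimensional strata of $\partial \Omega$ where $\abs{f_{j_1}} = \abs{f_{j_2}} = 1$ for some $j_1 \neq j_2$, so the correct intrinsic condition is that the cluster set contains \emph{some} point of $F_i$ rather than being contained in $F_i$; ruling out spurious coincidences between $\Lc_i$ and $\Lc_{i'}$ for $i \neq i'$ is what makes this step delicate, and is where the assumption of minimality of the defining set plays its essential role.
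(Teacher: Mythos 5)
Your proposal takes a genuinely different route from the paper, but the route as written has a gap that I believe is fatal: the ``intrinsic characterization'' of the characteristic foliations $\Lc_i$, which you identify as the key step, is false as stated. Consider $\Omega=\Delta^2$ with the minimal defining set $f_1=z_1$, $f_2=z_2$, so that $F_1=\{|z_1|=1,\ |z_2|<1\}$. Take $L=\{z_2 = z_1/2\}\cap\Delta^2$. This is a relatively closed connected complex hypersurface of $\Delta^2$; it is maximal (it is irreducible of codimension one); and its cluster set in $\partial\Omega$ is $\{(\zeta,\zeta/2):|\zeta|=1\}$, which lies entirely inside $F_1$. Yet $L$ is not a leaf of $\Lc_1=\{z_1=c\}$. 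One can even foliate all of $\Delta^2$ by the translates $\{z_2=z_1/2 + c\}$, which are the level sets of the bounded holomorphic function $z_2-z_1/2$, so the difficulty is not a peculiarity of a single leaf. In short, ``maximal hypersurface with cluster set meeting $F_i$'' is far too weak to single out the characteristic decomposition, so the invariance you want to transport along $\varphi$ is not available. You flag the danger of confusing $\Lc_i$ with $\Lc_{i'}$ for $i\neq i'$, but the real issue is that your condition also admits hypersurfaces that belong to no $\Lc_j$ at all. Without a correct intrinsic characterization, the remaining steps (that $\varphi^{-1}(\Lc_i)$ is again a characteristic foliation, and that $\varphi^{-1}$ permutes boundary strata in a cluster-set sense) have nothing to rest on, and the argument does not close.

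For comparison, the paper (following Zaitsev) sidesteps any such characterization. It picks, using minimality, a boundary point $\xi$ at which exactly one $|f_{i_0}|=1$ and $\nabla f_{i_0}(\xi)\neq 0$, straightens $f_{i_0}$ into the first coordinate near $\xi$ via Proposition~\ref{prop:chart}, and considers the tensor $G(z_1,z_2)=\bigotimes_{j=1}^N \partial(f_j\circ\varphi\circ\Psi)/\partial z_2$. A normal-family argument together with properness of $\varphi$ shows $G\to 0$ along $(\partial\Delta\cap W_1)\times W_2$: any locally uniform limit of $\varphi(\Psi(z_1^{(n)},\cdot))$ lands in $\partial\Omega$, so some $f_{j_0}$ composed with it is constant, forcing a factor of $G$ to vanish. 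Extending $G$ by zero and invoking Rad\'o's theorem shows $G\equiv 0$, which by analyticity gives a single index $i_0^*$ with $\nabla f_{i_0}\wedge\nabla(f_{i_0^*}\circ\varphi)\equiv 0$ on $\Omega$, and then Lemma~\ref{lem:wedge} identifies the level sets. The paper's argument is local-to-global through analyticity and completely avoids boundary extension of $\varphi$, whereas your plan tries to carry invariance data through cluster sets. If you want to pursue the cluster-set route, you would at minimum need a much stronger (and, I suspect, hard to formulate) uniqueness statement pinning down the characteristic foliation among all bounded-holomorphic level-set foliations with the prescribed boundary behavior.
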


Because the argument is short we will provide the proof of Theorem~\ref{thm:char_decomp} in Appendix~\ref{app:proof_of_char_decomp}.

It is possible for different $f_i$ to generate the same characteristic decomposition and so in general the map $\sigma$ is not uniquely defined. This lack of uniqueness can be overcome by considering a special subset. In particular, fix a subset 
\begin{align*}
\{i_1, \dots, i_{n_0}\} \subset \{1, \dots, N\}
\end{align*}
 so that: 
 \begin{enumerate}
 \item for any $1 \leq i \leq N$ there exists a $1 \leq k\leq n_0$ so that 
\begin{align*}
L(z, f_i) = L\left(z, f_{i_k}\right)
\end{align*}
for all $z \in \Omega$,
\item for all $1 \leq k_1 < k_2 \leq n_0$ there exists some $z \in \Omega$ so that 
\begin{align*}
L\left(z, f_{i_{k_1}}\right) \neq L\left(z, f_{i_{k_2}}\right).
\end{align*}
\end{enumerate}

Then for any element $\varphi \in \Aut(\Omega)$ there exists a unique map $\sigma(\varphi) : \{1, \dots, n_0\} \rightarrow \{1, \dots, n_0\}$ so that 
\begin{align*}
\varphi\left(L(z,f_{i_k})\right) = L\left(\varphi(z), f_{\sigma(\varphi)(i_k)}\right)
\end{align*}
for any $z \in \Omega$ and $1 \leq k \leq n_0$. Using the uniqueness of $\sigma(\varphi)$, it is straight-forward to verify that $\sigma$ is a homomorphism from $\Aut(\Omega)$ to the symmetric group $\Sym(n_0)$ on $n_0$ elements. This leads to the following corollary:

\begin{corollary}\label{cor:finite_index}
Suppose that $\Omega$ is an analytic polyhedron and $f_1, \dots, f_N : U \rightarrow \Omega$ is a minimal set of defining  functions. Then there exists a finite index normal subgroup $H \leq \Aut(\Omega)$ so that 
 \begin{align*}
\varphi(L(z,f_i)) = L(\varphi(z), f_{i})
\end{align*}
for all $\varphi \in H$, $z \in \Omega$, and $1 \leq i \leq N$. 
\end{corollary}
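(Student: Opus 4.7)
The statement is an immediate consequence of the preceding discussion, so my plan is essentially to package the already-established material. I would take $H$ to be the kernel of the homomorphism $\sigma:\Aut(\Omega) \to \Sym(n_0)$ constructed in the paragraph just before the corollary. Since $\Sym(n_0)$ is finite, $H$ is automatically a normal subgroup of finite index in $\Aut(\Omega)$, which handles the group-theoretic half of the claim with no work.

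For the foliation-invariance half, I would argue as follows. For $\varphi \in H$ and each $1\leq k \leq n_0$, the fact that $\sigma(\varphi) = \id$ directly yields
\begin{align*}
\varphi\left(L(z, f_{i_k})\right) = L\left(\varphi(z), f_{i_k}\right)
\end{align*}
for every $z \in \Omega$. To upgrade from the distinguished indices $i_1, \dots, i_{n_0}$ to an arbitrary index $1 \leq i \leq N$, I would invoke property (1) of the chosen subset: there exists some $k$ such that $L(w, f_i) = L(w, f_{i_k})$ for every $w \in \Omega$. Applying this identity at the two points $w = z$ and $w = \varphi(z)$ converts the equality above into
\begin{align*}
\varphi\left(L(z, f_i)\right) = \varphi\left(L(z, f_{i_k})\right) = L\left(\varphi(z), f_{i_k}\right) = L\left(\varphi(z), f_i\right),
\end{align*}
which is exactly the conclusion.

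There is essentially no obstacle here; the corollary is a bookkeeping consequence of Theorem~\ref{thm:char_decomp} together with the well-definedness and multiplicativity of $\sigma$. If anything, the only point deserving a sentence of care is that the defining functions $f_1,\dots,f_N$ in the corollary need not themselves be the distinguished representatives $f_{i_1}, \dots, f_{i_{n_0}}$, and the two-step identity displayed above is precisely what handles that difference.
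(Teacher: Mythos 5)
Your argument is correct and is exactly the (omitted) proof the paper intends: take $H = \ker\sigma$, use finiteness of $\Sym(n_0)$ for normality and finite index, and use property (1) of the distinguished subset $\{i_1,\dots,i_{n_0}\}$ applied at both $z$ and $\varphi(z)$ to pass from the distinguished indices to an arbitrary index $i$. Nothing is missing.
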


\subsection{Proof of Theorem~\ref{thm:main2_refined}}

Suppose $\Omega$ is a generic analytic polyhedron and $f_1, \dots, f_N : U \rightarrow \Omega$ is a set of generic defining functions. We may assume that for every $1 \leq i \leq N$ the set 
\begin{align*}
\{ \xi \in \partial \Omega : \abs{f_i(\xi)} = 1 \}
\end{align*}
is non-empty. Then Proposition~\ref{prop:chart} implies that $f_1, \dots, f_N$ is a minimal set of defining functions. Then, by Corollary~\ref{cor:finite_index}, there exists a finite index normal subgroup $H \leq \Aut(\Omega)$ so that 
 \begin{align*}
\varphi(L(z,f_i)) = L(\varphi(z), f_{i})
\end{align*}
for all $\varphi \in H$, $z \in \Omega$, and $1 \leq i \leq N$.

Now fix some $\xi \in \Lc(\Omega)$. Let $r = r(\xi)$ and by  relabeling the functions $f_1, \dots, f_N$ we may assume that 
\begin{align*}
\{ 1, \dots, r\} = \left\{ i : \abs{f_i(\xi)} = 1\right\}.
\end{align*}

\begin{lemma} There exists $w_0 \in \Omega$ and a sequence $\varphi_n \in H$ so that
\begin{align*}
\varphi_n(w_0) \rightarrow \xi.
\end{align*}
\end{lemma}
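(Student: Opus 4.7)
The plan is to start from an arbitrary sequence witnessing $\xi \in \Lc(\Omega)$ and then replace it by a suitable subsequence lying in the finite index subgroup $H$, after adjusting the base point by a coset representative. The pigeonhole principle over the (finite) coset space $\Aut(\Omega)/H$ is the only real tool needed.

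First, since $\xi \in \Lc(\Omega)$, by definition there exist a point $w_0' \in \Omega$ and a sequence $\psi_n \in \Aut(\Omega)$ with $\psi_n(w_0') \to \xi$. Because $H \leq \Aut(\Omega)$ has finite index, we may decompose
\begin{align*}
\Aut(\Omega) = \bigsqcup_{j=1}^k H \alpha_j
\end{align*}
into finitely many right cosets with representatives $\alpha_1, \dots, \alpha_k \in \Aut(\Omega)$. For each $n$, write $\psi_n = h_n \alpha_{j_n}$ with $h_n \in H$ and $j_n \in \{1, \dots, k\}$.

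By the pigeonhole principle some index $j \in \{1, \dots, k\}$ satisfies $j_n = j$ for infinitely many $n$. Passing to that subsequence, set $w_0 := \alpha_j(w_0') \in \Omega$ and $\varphi_n := h_n \in H$. Then
\begin{align*}
\varphi_n(w_0) = h_n \alpha_j(w_0') = \psi_n(w_0') \longrightarrow \xi,
\end{align*}
which is exactly the statement of the lemma. There is no real obstacle: the only observation being used is that finite index implies finitely many cosets, so some coset representative appears infinitely often, and absorbing it into the base point yields an orbit inside $H$ converging to $\xi$.
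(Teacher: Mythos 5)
Your proof is correct, and it is in fact a cleaner version of the paper's argument. The paper also starts from a sequence $\phi_n$ with $\phi_n(w_0')\to\xi$ and exploits that $H$ has finite index, but it does so by passing to the finite quotient group $\Aut(\Omega)/H$ (so it leans on the \emph{normality} of $H$), fixing a subsequence on which the image $\rho(\phi_n)$ is a constant element $g$, and then setting $\varphi_n=\phi_n\theta$ where $\theta$ is a fixed product of earlier $\phi_i$'s chosen so that $\rho(\varphi_n)=g^{n_0}=\id$. The paper additionally passes to a subsequence converging locally uniformly in order to deduce $\varphi_n(w_0)\to\xi$, though this is redundant since $\varphi_n(w_0)=\phi_n(\theta(\theta^{-1}(w_0')))=\phi_n(w_0')$ anyway. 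Your version avoids both the appeal to normality and the normal-family detour: you only need a finite right-coset decomposition $\Aut(\Omega)=\bigsqcup_j H\alpha_j$, pigeonhole to fix a coset, and then the identity $\psi_n(w_0')=h_n(\alpha_j(w_0'))$ to absorb the representative into the base point. Both routes are driven by the same observation (finite index plus pigeonhole), but yours is more direct and slightly more general since it does not require $H\trianglelefteq\Aut(\Omega)$ (which happens to hold here by Corollary~\ref{cor:finite_index}).
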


\begin{proof}
Fix some $w_0^\prime \in \Omega$ and a sequence $\phi_n \in \Aut(\Omega)$ so that 
\begin{align*}
\phi_n(w_0^\prime) \rightarrow \xi.
\end{align*}
By possibly passing to a subsequence we can suppose that $\phi_n$ converges locally uniformly to a holomorphic map $\phi_\infty: \Omega \rightarrow \overline{\Omega}$. 

Consider the natural homomorphism $\rho:\Aut(\Omega) \rightarrow \Aut(\Omega)/H$ and let $n_0$ be the order of $\Aut(\Omega)/H$. By passing to a subsequence we can suppose that $\rho(\phi_n)$ is constant. Then let 
\begin{align*}
\theta = \prod_{i=1}^{n_0-1} \varphi_i 
\end{align*}
and $\varphi_n = \phi_n \theta$. Then $\varphi_n$ converges locally uniformly to $\phi_\infty\theta$ and so $\varphi_n(w_0) \rightarrow \xi$ where $w_0 = \theta^{-1}(w_0^\prime)$. Moreover, 
\begin{align*}
\rho(\varphi_n) = \rho(\phi_n) \prod_{i=1}^{n_0-1} \rho(\phi_i) =  \rho(\phi_n)^{n_0} = \id
\end{align*}
and so $\varphi_n \in H$. 
\end{proof}

Now we repeat the proof of Theorem~\ref{thm:main2} with the point $w_0 \in \Omega$ and the  sequence $\varphi_n \in H$. In particular: if necessary, we can define holomorphic functions $f_{N+1}, \dots, f_M : U \rightarrow \Cb^d$ so that
\begin{enumerate}
\item for any $N+1 \leq i \leq M$, $f_i(\overline{\Omega}) \subset \Delta$,
\item for any $z, w \in \overline{\Omega}$ distinct there exists $1 \leq i \leq M$ so that $f_i(z)  \neq f_i(w)$,
\item for any point $z \in \overline{\Omega}$ we have
\begin{align*}
\Span_{\Cb} \left\{ \nabla f_1(z), \dots, \nabla f_M(z)\right\} = \Cb^d.
\end{align*}
\end{enumerate}
Now consider the map $F: \Omega \rightarrow \Delta^M$ given by 
\begin{align*}
F(z) = (f_1(z), \dots, f_M(z)).
\end{align*}
Then let $w_n = \varphi_n(w_0)$. Then for $1 \leq i \leq r$ let 
\begin{align*}
\lambda_i^{(n)} = \frac{ i}{f_i(w_n)}\frac{\abs{f_i(w_n)}}{\abs{f_i(w_n)}-1}.
\end{align*}
Then define affine maps $A_n \in \Aff(\Cb^r)$ by 
\begin{align*}
A_n(z_1, \dots, z_r) =  \left( \lambda_1^{(n)}\left( z_1 - \frac{f_1(w_n)}{\abs{f_1(w_n)}} \right), \dots, \lambda_r^{(n)}\left( z_r - \frac{f_r(w_n)}{\abs{f_r(w_n)}} \right) \right). 
\end{align*}
Finally define affine maps $\overline{A}_n \in \Aff(\Cb^M)$ by
\begin{align*}
\overline{A}_n(z_1, \dots, z_M) = \left( A_n(z_1, \dots, z_r), z_{r+1}, \dots, z_M\right).
\end{align*}

As in the proof of Theorem~\ref{thm:main2}, we can pass to a subsequence so that the maps $\Phi_n = \overline{A}_nF \varphi_n : \Omega \rightarrow \Hc^r \times \Delta^{M-r}$ converge to a biholomorphism $\Phi : \Omega \rightarrow \Hc^r \times W$ where $W$ is biholomorphic to $\Fc(\xi)$.

Now for $1 \leq i \leq r$ let $\Phi_n^{(i)}$ be the $i^{th}$ coordinate function of $\Phi_n$ and let $\Phi^{(i)}$ be the $i^{th}$ coordinate function of $\Phi$. Consider the affine maps $\ell_n^{(i)} \in \Aff(\Cb)$ given by
\begin{align*}
\ell_n^{(i)}(z) = \lambda_i^{(n)}\left( z - \frac{f_i(w_n)}{\abs{f_i(w_n)}} \right).
\end{align*}
Then $\Phi_n^{(i)} = \ell_n^{(i)} \circ f_i \circ \varphi_n$. 

Now if $w \in L(z, f_i)$ and $1 \leq i \leq r$ then 
\begin{align*}
\varphi_n(w) \in \varphi_n( L(z, f_i)) = L(\varphi_n(z), f_i)
\end{align*}
and so $f_i(\varphi_n( w)) = f_i(\varphi_n (z))$. Then we have
\begin{align*}
\Phi^{(i)}(w) = \lim_{n \rightarrow \infty} ( \ell_n^{(i)} \circ f_i \circ \varphi_n)(w) =  \lim_{n \rightarrow \infty} ( \ell_n^{(i)} \circ f_i \circ \varphi_n)(z) = \Phi^{(i)}(z).
\end{align*}
Thus 
\begin{align*}
L(z, f_i) \subset L\left(z, \Phi^{(i)}\right)
\end{align*}
for all $z \in \Omega$ and $1 \leq i \leq r$. This implies that 
\begin{align*}
\nabla f_i(z) \wedge \nabla \Phi^{(i)}(z)=0
\end{align*}
for all $z \in \overline{\Omega}$. So by Lemma~\ref{lem:wedge} we have: 
\begin{align*}
L(z, f_i) = L\left(z, \Phi^{(i)}\right)
\end{align*}
for all $z \in \Omega$ and $1 \leq i \leq r$. 

Finally suppose that $\theta \in \Aut(\Fc(\xi))$ and $\wh{\theta} = \Phi^{-1} (\id, \theta) \Phi$. Then 
 \begin{align*}
\wh{\theta}\left( L \left(z, \Phi^{(i)}\right) \right)= L\left(z, \Phi^{(i)}\right)
 \end{align*}
 for $1 \leq i \leq r$ and $z \in \Omega$. Hence
  \begin{align*}
\wh{\theta}( L (z, f_i)) = L(z, f_i)
\end{align*}
and
  \begin{align*}
f_i\left(\wh{\theta}(z)\right) =  f_i(z)
 \end{align*}
for all $1 \leq i \leq r$ and $z \in \Omega$. 

\appendix 

\section{Characteristic decompositions}\label{app:proof_of_char_decomp}

Given a domain $\Omega \subset \Cb^d$ and a function $f:\Omega \rightarrow \Cb^d$ let $L(z,f)$ be the connected component of $f^{-1}(f(z))$ which contains $z$. In this appendix we provide a proof of the following classical result:

\begin{theorem}\cite[Satz 14]{RS1960}\label{thm:char_decomp_app}
 Suppose $\Omega$ is an analytic polyhedron and $f_1, \dots, f_N : U \rightarrow \Omega$ is a minimal defining set. Then for any $\varphi \in \Aut(\Omega)$ there exists a map $\sigma:\{1, \dots, N\} \rightarrow \{1, \dots, N\} $ so that
\begin{align*}
 \varphi(L(z,f_i)) = L(\varphi(z), f_{\sigma(i)})
\end{align*}
for all $z \in \Omega$ and $1 \leq i \leq N$. 
\end{theorem}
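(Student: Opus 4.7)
The plan is to transfer the question from $\varphi$ to the bounded holomorphic functions $g_i := f_i \circ \varphi^{-1} : \Omega \to \Delta$. Since $\varphi$ is a biholomorphism of $\Omega$, the identity $\varphi(L(z, f_i)) = L(\varphi(z), g_i)$ holds for every $z \in \Omega$, so the theorem reduces to showing that for each $i$ there exists some $\sigma(i) \in \{1,\dots,N\}$ such that $L(w, g_i) = L(w, f_{\sigma(i)})$ for every $w \in \Omega$. Equivalently, the holomorphic $2$-form $dg_i \wedge df_{\sigma(i)}$ must vanish identically on $\Omega$.

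To choose $\sigma(i)$, I would analyze the boundary behavior of $g_i$. Minimality of the defining set, together with Proposition~\ref{prop:chart}, guarantees that each face
\begin{align*}
F_j := \bigl\{\xi \in \partial\Omega : |f_j(\xi)|=1,\ |f_k(\xi)| < 1 \text{ for } k \neq j\bigr\}
\end{align*}
is nonempty and has a relatively open portion in $\partial\Omega$ of real dimension $2d-1$. Any sequence $z_n \in \Omega$ with $|g_i(z_n)| \to 1$ comes from a sequence $\varphi^{-1}(z_n)$ which, after passing to a subsequence, converges to a boundary point where $|f_i|=1$. A pigeonhole argument over the finitely many faces of $\partial\Omega$ then yields a single index $j$ such that $|g_i|$ has cluster value $1$ along sequences approaching every point in a nonempty relatively open subset $U_j \subset F_j$; set $\sigma(i) := j$.

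With $\sigma(i) = j$ fixed, apply Proposition~\ref{prop:chart} at a point $\xi \in U_j$ to obtain a chart in which $\Omega$ is identified with $(\mathcal O_1 \cap \Delta) \times \mathcal O_2 \times \dots \times \mathcal O_d$ and $f_j$ is the first coordinate $z_1$. In these coordinates $g_i$ becomes a jointly holomorphic family, parametrized by $z' = (z_2,\dots,z_d)$, of bounded holomorphic functions on $\Delta$ whose boundary modulus along $|z_1|=1$ is identically $1$. The Poisson-Jensen formula forces each slice $g_i(\cdot,z')$ to be a finite Blaschke product; matching Cauchy-Riemann equations in $z'$ (the conjugates $\overline{a_k(z')}$ appearing in any Blaschke factorization must be antiholomorphic in $z'$, forcing the zeros and the unimodular constant to be independent of $z'$) shows the family is constant in $z'$. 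Hence $g_i$ depends only on the $f_j$-coordinate locally, so $dg_i \wedge df_j = 0$ on an open subset of $\Omega$ and therefore on all of $\Omega$ by analytic continuation of this holomorphic $2$-form. Integrating this distribution gives $L(w, f_j) \subset L(w, g_i)$ for every $w \in \Omega$, and a symmetric application of the argument to $\varphi^{-1}$ produces the reverse inclusion.

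The main obstacle is the cluster-set step: verifying that $|g_i|$ actually attains cluster value $1$ on a whole relatively open subset of some face $F_j$, rather than only at isolated points. This requires combining the minimality of the defining set, the stratification of $\partial\Omega$ by the face-rank function $r(\xi)$, and some control on how $\varphi^{-1}$ transports boundary-approaching sequences. It is the only step in the argument where the analytic polyhedron hypothesis does real work; the rest is local chart analysis and analytic continuation.
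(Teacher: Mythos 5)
Your overall strategy mirrors the paper's: fix an index, pass to a local chart from Proposition~\ref{prop:chart} in which the distinguished defining function becomes a coordinate, show that some composed function is independent of the complementary coordinates, and then globalize via analyticity and Lemma~\ref{lem:wedge}. However, the central analytic step --- proving that the relevant function depends only on the first coordinate --- is handled by an argument that does not go through.

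Two distinct problems. First, the pigeonhole step. You need $|g_i|$ to have boundary cluster value $1$ on a \emph{relatively open} subset of some face $F_j$, but nothing in the stated argument produces openness. The cluster set $C=\{\xi\in\partial\Omega: \limsup_{z\to\xi}|g_i(z)|=1\}$ is a closed set; decomposing it across the finitely many closed face closures $\overline{F_j}$ by pigeonhole only tells you some $C\cap\overline{F_j}$ is nonempty, not that it has interior. In fact, knowing in advance that $C$ contains an open piece of a face is essentially equivalent to the conclusion you are trying to prove, so the argument is circular at this point. You flag this yourself, but it is not a technicality to be filled in later; it is the crux.

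Second, and independently, the Blaschke-product step is unsound even if the cluster-set claim were granted. The chart $\Psi$ only identifies a neighborhood of $\xi$ in $\Omega$ with $(\Oc_1\cap\Delta)\times\Oc_2\times\dots\times\Oc_d$; the slices $z_1\mapsto g_i(\Psi^{-1}(z_1,z'))$ are defined on $\Oc_1\cap\Delta$, an intersection of the disk with a small neighborhood, \emph{not} on all of $\Delta$. Finite Blaschke factorization requires control on the full boundary circle; having boundary modulus $1$ only along a proper arc $\partial\Delta\cap\Oc_1$ gives nothing (e.g.\ $z_1\mapsto z_1$ has this property without being a finite Blaschke product on any sub-disk). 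Moreover ``cluster value $1$ at each point of an arc'' is far weaker than ``radial boundary modulus identically $1$,'' which is what the Poisson--Jensen argument would require, and the joint-holomorphy-of-zeros step assumes the zero count is constant and the zeros stay simple and interior, none of which is established.

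The paper avoids all of this by going through the boundary vanishing of the holomorphic tensor $G=\otimes_j \partial(f_j\circ\varphi\circ\Psi)/\partial z_2$, extending $G$ by $0$ across $(\partial\Delta\cap W_1)\times W_2$, and invoking Rado's theorem to conclude $G\equiv 0$; then a Baire/analyticity argument forces one factor to vanish identically, and Lemma~\ref{lem:wedge} finishes. The Rado step is exactly the mechanism that converts ``vanishing in the limit at the boundary'' into ``vanishing on a full open set,'' which is what your cluster-set and Blaschke reasoning were trying, unsuccessfully, to replace. If you want to salvage your route, the productive move is to study $f_j\circ\varphi$ rather than $g_i=f_i\circ\varphi^{-1}$, look at the $z_2$-derivatives in the chart, and obtain boundary decay from properness of $\varphi$ plus the covering $\partial\Omega\subset\bigcup_j f_j^{-1}(\partial\Delta)$ and the open mapping theorem --- i.e., essentially the paper's argument.
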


We begin by proving the following lemma:

\begin{lemma}\label{lem:wedge}
Suppose $\Omega \subset \Cb^d$ is a domain and $f,g: \Omega \rightarrow \Cb$ are non-constant holomorphic functions. If 
\begin{align*}
\nabla f(z) \wedge \nabla g(z) = 0
\end{align*}
for all $z \in \Omega$, then 
\begin{align*}
L(z,f) = L(z,g)
\end{align*}
for all $z \in \Omega$. 
\end{lemma}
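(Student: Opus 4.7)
By the symmetry of the hypothesis in $f$ and $g$, it suffices to prove $L(z,f) \subset L(z,g)$ for every $z \in \Omega$; reversing the roles then gives equality. Since $L(z,f)$ is connected and $g$ is continuous, this reduces to showing $g$ is constant on $L(z,f)$, for then the connected image $g(L(z,f))$ is a single point and $L(z,f)$ lies in a single connected component of $g^{-1}(g(z))$ — namely $L(z,g)$.

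The local ingredient is a normal-form computation on the open dense set $V = \{w \in \Omega : \nabla f(w) \neq 0\}$. Near any $p \in V$, I would use the implicit function theorem to pick holomorphic coordinates $(w_1, \ldots, w_d)$ in which $f$ becomes the first coordinate. The hypothesis $\nabla f \wedge \nabla g = 0$ then forces $\partial g / \partial w_j = 0$ for $j = 2, \ldots, d$, so locally $g = \tilde g(f)$ for a holomorphic $\tilde g$ of one variable; since $g$ is non-constant on $\Omega$, $\tilde g$ is non-constant and $\tilde g^{-1}(\tilde g(f(p)))$ is discrete, hence $\{f=f(p)\}$ and $\{g=g(p)\}$ coincide as set-germs at $p$. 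Fix $z_0 \in \Omega$ and set $L = L(z_0, f)$. The preceding analysis shows $g$ is locally constant on $L \cap V$; write $L \cap V = \bigsqcup_i L_i$ as a disjoint union of connected components with $g \equiv c_i$ on $L_i$. If the $c_i$ were not all equal, partitioning the $L_i$ by the value of $g$ and using continuity of $g$ on $L$ together with density of $L \cap V$ in $L$ would (in this generic case) express $L$ as a nontrivial disjoint union of two nonempty closed subsets — contradicting connectedness of $L$. Hence $g$ is constant on $L \cap V$, and therefore on $L$ by continuity.

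The main obstacle is the degenerate case in which $L \subset \Omega \setminus V$ — that is, $\nabla f \equiv 0$ identically on $L$ — so that $L \cap V$ is empty and the preceding step is vacuous. To deal with it I plan to pass to the Stein factorization of $\Psi = (f,g) : \Omega \to \Cb^2$: because $d\Psi$ has complex rank at most one everywhere, the image $\Psi(\Omega)$ lies in a one-dimensional analytic subvariety $S \subset \Cb^2$, and composing with its normalization $\nu : \widetilde S \to S$ yields (by the universal property of normalization, since $\Omega$ is a normal complex space) a holomorphic factorization $\Psi = \nu \circ \widetilde \Psi$ with $\widetilde\Psi : \Omega \to \widetilde S$ a map into a Riemann surface. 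Writing $\phi = \mathrm{pr}_1 \circ \nu$ and $\psi = \mathrm{pr}_2 \circ \nu$, both are non-constant holomorphic functions on $\widetilde S$ with discrete fibers, and $f = \phi \circ \widetilde\Psi$, $g = \psi \circ \widetilde\Psi$. A connectedness argument on the decomposition $f^{-1}(f(z_0)) = \bigsqcup_{t \in \phi^{-1}(f(z_0))} \widetilde\Psi^{-1}(t)$ then identifies $L(z_0,f)$ with the connected component of $\widetilde\Psi^{-1}(\widetilde\Psi(z_0))$ containing $z_0$; the same description holds for $L(z_0,g)$ by symmetry, so $L(z_0,f) = L(z_0,g)$.
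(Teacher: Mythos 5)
Both branches of your proposal have genuine gaps, and the paper's proof is structured precisely to avoid them. In the generic branch you implicitly assume that $L\cap V$ is dense in $L$, but that can fail even when $L\cap V\neq\emptyset$: with $f(z_1,z_2)=z_1^2z_2$ one has $L(0,f)=\{z_1=0\}\cup\{z_2=0\}$ while $\nabla f\equiv 0$ on the whole component $\{z_1=0\}$, which is therefore disjoint from $V$. So your two cases are not exhaustive. Even granting density, the final connectedness step is not airtight: the values $\{c_i\}$ of $g$ on the components of $L\cap V$ form a countable set, so by continuity $g(L)\subset\overline{\{c_i\}}$; but a countable set can have uncountable closure, and then grouping the $L_i$ by value of $g$ need not produce a partition of $L$ into two \emph{disjoint} closed sets, since a point of $L\setminus V$ can simultaneously be a limit of $L_i$'s carrying different values. (The Cantor function --- continuous, locally constant on a dense open set, yet non-constant --- is the real-variable warning sign.) The paper closes this gap by a different decomposition: it proves $g$ is constant on \emph{every} irreducible component $A$ of $L(z_0,f)$, either trivially when $dg\equiv 0$ on $A$, or by a local normal form at a regular point of $L(z_0,f)$ on $A$ where $dg\neq 0$ (writing $\nabla f=h\nabla g$ there) together with irreducibility; the local Noetherian property then gives only countably many components, so $g(L(z_0,f))$ \emph{itself} is a countable connected subset of $\Cb$, hence a singleton.

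The degenerate branch is the more serious problem. The assertion that $\Psi(\Omega)$ lies in a one-dimensional analytic subvariety $S$ of an open subset of $\Cb^2$ is not justified, and it is not an immediate consequence of the pointwise rank bound: the image of a non-proper holomorphic map need not be analytic, and although each fiber of $\Psi$ is pure of codimension one so that the image is locally the trace of a one-dimensional germ, assembling those germs into a single analytic set near a point of the image whose $\Psi$-fiber may have infinitely many components requires an argument you have not supplied. If that claim could be proven, your Stein-factorization route would actually subsume the generic branch and give a cleaner proof; as written, it is a gap. The paper's argument never needs a global factor and sidesteps the issue entirely.
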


\begin{proof}
Fix some $z_0 \in \Omega$ and let $A$ be an irreducible component of $L(z_0,f)$. Then let 
\begin{align*}
A_0 = A \cap L(z_0, f)_{reg}
\end{align*}
where $L(z_0, f)_{reg}$ is the regular points of $L(z_0, f)$. By the Theorem in~\cite[Chapter 1.5.4]{C1989}, $A_0$ is connected and open, dense in $A$. 

We claim that $A \subset L(z, g)$ for any $z \in A$. Since $A$ is an irreducible component of $L(z_0,f)$ either $d(g)_z \equiv 0$ on $A$ or $\{ z \in A : d(g)_z \neq 0\}$ is an open, dense subset of $A$. In the first case, clearly $g$ is constant on $A$ and hence $A \subset L(z, g)$ for any $z \in A$. So assume that $\{ z \in A : d(g)_z \neq 0\}$ is an open, dense subset of $A$ and fix some $w_0 \in A_0$ so that $d(g)_{w_0} \neq 0$. Then, since $\nabla f(z)$ and $\nabla g(z)$ are co-linear, there exists a neighborhood $W$ of $w_0$ and a function $h:W \rightarrow \Cb$ so that 
\begin{align*}
d(f)_w = h(w) d(g)_w \text{ for all } w \in W.
\end{align*}
This implies that $L(w, g|_W) \subset L(w, f|_W)$ for all $w \in W$. Now by assumption $w_0$ is a regular point of $L(w_0, f) = L(z_0, f)$ and so after possibly shrinking $W$ we may assume that $L(w_0, f|_W)$ is a submanifold of $W$. Since $L(w_0, g|_W) \subset L(w_0, f|_W)$ Proposition 2 in~\cite[Chapter 1.2.3]{C1989} implies that $w_0$ is a regular point of $L(w_0, g|_W)$. Hence, by possibly shrinking $W$ again, we must have that $L(w_0, g|_W) = L(w_0, f|_W)$. But this implies that $g$ is constant on $A_0 \cap W= L(w_0, f|_W)$. So, since $A$ is irreducible, we see that $g$ is constant on $A$. Thus $A \subset L(w_0, g)$. Since $A$ is connected, $A \subset L(z,g)$ for any $z \in A$. 

Then by the local Noetherian property for analytic sets, see the Theorem in~\cite[Chapter 1.5.4]{C1989}, there exists a countable set $\{z_\alpha \}$ so that 
\begin{align*}
L(z_0,f) \subset \cup L(z_\alpha, g).
\end{align*}
Then $g$ takes on at most countable many values on $L(z_0,f)$ so connectivity implies that $g$ is constant on $L(z_0,f)$ and hence $L(z_0,f) \subset L(z_0,g)$.

Since $z_0 \in \Omega$ was arbitrary we see that 
 \begin{align*}
L(z,f) \subset L(z,g)
\end{align*}
for all $z \in \Omega$. Then by reversing the role of $f$ and $g$ in the argument above we see that 
 \begin{align*}
L(z,f) = L(z,g)
\end{align*}
for all $z \in \Omega$. 
\end{proof}

The following argument is the proof of Theorem 1.3 in~\cite{Z1998} taken essentially verbatim: 

\begin{proof}[Proof of Theorem~\ref{thm:char_decomp_app}]
Fix some $1 \leq i_0 \leq N$. We will find an $1 \leq i_0^* \leq N$ so that 
\begin{align*}
\varphi\left(L(z,f_{i_0}) \right)=L\left(\varphi(z),f_{i_0^*}\right)
\end{align*}
for all $z \in \Omega$.

Since the defining set is minimal there exists some $\xi^\prime \in \partial \Omega$ so that 
\begin{align*}
\{i_0\} = \{ i : \abs{f_i(\xi^\prime)}=1\}.
\end{align*}
Then there exists a neighborhood $\Oc$ of $\xi^\prime$ so that for all $\eta \in \Oc$ we have 
\begin{align*}
\{i_0\} = \{ i : \abs{f_i(\eta)}=1\}.
\end{align*}
Since $f_{i_0}$ is holomorphic the set 
\begin{align*}
\{ u \in \Oc : \nabla f_{i_0}(u) \neq 0\} 
\end{align*}
is connected, see Proposition 3 in~\cite[Chapter 1.2.2]{C1989}. Thus there exists some $\xi \in \partial \Omega$ so that 
\begin{align*}
\{i_0\} = \{ i : \abs{f_i(\xi)}=1\}
\end{align*}
and $\nabla f_{i_0}(\xi) \neq 0$. 

Then using the proof of Proposition~\ref{prop:chart}  there exists a domain $W_1 \subset \Cb$ containing $f_{i_0}(\xi)$, a domain $W_2 \subset \Cb^{d-1}$, a neighborhood $\Oc$ of $\xi$, and a biholomorphism $\Psi: W_1 \times W_2 \rightarrow \Oc$ so that $\Psi^{-1}(\Omega) = (\Delta \cap W_1) \times W_2$ and the $1^{st}$ coordinate function of $\Psi^{-1}$ is $f_{i_0}$.

Next consider the holomorphic map $G: (\Delta \cap W_1)\times W_2 \rightarrow \otimes_{j=1}^N \Cb^{d-1}$ given by 
\begin{align*}
G(z_1, z_2) = \otimes_{j=1}^N \frac{\partial(f_j \circ \varphi \circ \Psi)}{\partial z_2}(z_1,z_2).
\end{align*}
We claim that 
\begin{align*}
\lim_{z \rightarrow \eta} G(z) = 0
\end{align*}
for any $\eta \in (\partial \Delta \cap W_1) \times W_2$. Suppose not, then there exists 
\begin{align*}
\eta = (\eta_1, \eta_2) \in (\partial \Delta \cap W_1) \times W_2
\end{align*}
and $z^{(n)}= \left(z_1^{(n)}, z_2^{(n)}\right) \rightarrow \eta$ so that 
\begin{align*}
\lim_{n \rightarrow \infty} G\left(z^{(n)}\right) \neq 0.
\end{align*}
Consider the sequence of holomorphic maps $\overline{\varphi}_n: W_2 \rightarrow \Omega$ given by
\begin{align*}
\overline{\varphi}_n(z) = \varphi\left(\Psi\left(z_1^{(n)}, z\right)\right).
\end{align*}
Since $\Omega$ is bounded we can pass to a subsequence and assume that $\overline{\varphi}_n$ converges locally uniformly to a holomorphic map $\overline{\varphi}:W_2 \rightarrow \overline{\Omega}$. Since $\varphi$ is an automorphism and hence proper, we see that $\overline{\varphi}(W_2) \subset \partial \Omega$. Then since 
\begin{align*}
\partial \Omega  \subset \cup_{j=1}^N f_j^{-1}(\partial \Delta)
\end{align*}
there exists an open set $W_2^\prime \subset W_2$ and some $1 \leq j_0 \leq N$ so that $\overline{\varphi}(W_2^\prime) \subset f_{j_0}^{-1}(\partial \Delta)$. Now $f_{j_0} \circ \overline{\varphi}: W_2 \rightarrow \Cb$ is holomorphic  and $(f_{j_0} \circ \overline{\varphi})(W_2^\prime) \subset \partial \Delta$, so $f_{j_0} \circ \overline{\varphi}: W_2 \rightarrow \Cb$ must be constant. Thus 
\begin{align*}
\frac{\partial (f_{j_0} \circ \overline{\varphi})}{\partial z}(z) = 0
\end{align*}
but then
\begin{align*}
0=\frac{\partial (f_{j_0} \circ \overline{\varphi})}{\partial z}(z_2) = \lim_{n \rightarrow \infty}  \frac{\partial(f_{j_0} \circ \varphi \circ \Psi)}{\partial z_2}\left(z_1^{(n)},z_2^{(n)}\right)
\end{align*}
which is a contradiction. Thus 
\begin{align*}
\lim_{z \rightarrow \eta} G(z) = 0
\end{align*}
for any $\eta \in (\partial \Delta \cap W_1) \times W_2$. 

Next we extend $G$ to a map on all of $W_1 \times W_2$ by defining $G(z_1, z_2) = 0$ when $z_1 \in (W_1 \setminus \Delta) \times W_2$. Then the above claim shows that $G$ is continuous and by definition $G$ is holomorphic on $\{ z : G(z) \neq 0\}$. So by Rado's Theorem, see~\cite[pg. 51]{N1971}, $G$ is holomorphic. But $G$ vanishes on an open set, namely $ (W_1 \setminus \Delta) \times W_2$, and hence is identically zero. So there exists   $1 \leq i_0^* \leq N$ so that 
\begin{align*}
\frac{\partial(f_{i_0^*} \circ \varphi \circ \Psi)}{\partial z_2}(z_1,z_2) =0
\end{align*}
on $(\Delta \cap W_1) \times W_2$. 

Then $\nabla f_{i_0} \wedge \nabla (f_{i_0^*} \circ \varphi)= 0$ on $U$ and so by analyticity $\nabla f_{i_0} \wedge \nabla (f_{i_0^*} \circ \varphi)= 0$ on $\Omega$. So by the Lemma 
\begin{align*}
L(z,f_{i_0})=L(z,f_{i_0^*} \circ \varphi)
\end{align*}
for all $z \in \Omega$. Thus 
\begin{align*}
\varphi(L(z,f_{i_0}))=L(\varphi(z),f_{i_0} \circ \varphi^{-1})=L(\varphi(z),f_{i_0^*})
\end{align*}
for all $z \in \Omega$. 

\end{proof}

\bibliographystyle{alpha}
\bibliography{complex_kob}

\end{document}